\newtheorem{theorem}{Theorem}
\theoremstyle{plain}
\newtheorem{corollary}{Corollary}
\newtheorem{definition}{Definition}
\newtheorem{lemma}{Lemma}
\newtheorem{proposition}{Proposition}
\newtheorem{remark}{Remark}
\numberwithin{equation}{section}
\newcommand{\brak}[1]{\langle #1\rangle}
\DeclareMathOperator{\Hom}{Hom}
\DeclareMathOperator{\id}{Id}
\def\co{\colon\thinspace} 
\def\mf{\mathfrak}
\newskip\stdskip                      
\begin{document}
\title{The universal $\mf{sl}(2)$ cohomology via webs and foams}
\author{Carmen Caprau}
\address{Department of Mathematics, California State University, Fresno}
\email{ccaprau@csufresno.edu}
\date{}
\subjclass[2000]{57M27, 57M25}
\keywords{categorification, foams, functoriality, link cohomology, movie moves, webs}

\begin{abstract}
We construct the universal $\mf{sl}(2)$-tangle cohomology using an \linebreak approach with webs and dotted foams. This theory depends on two parameters, and for the case of links it is a categorification of the unnormalized Jones polynomial of the link. 
\end{abstract}

\maketitle

\section{\textbf{Introduction}}

Khovanov classified in~\cite{Kh2} all possible Frobenius systems of rank two that give rise to link homologies via his original construction in~\cite{Kh1}, and showed that there is a universal one corresponding to $\mathbb{Z}[X, a, h]/(X^2 - hX -a),$ where $a$ and $h$ are formal variables. Using Bar-Natan's~\cite{BN1} approach to local Khovanov homology and Khovanov's work in~\cite{Kh3}, the author showed in~\cite{me} how to construct a bigraded tangle cohomology theory depending on one parameter, via a setup with webs and foams---singular cobordisms---modulo a finite set of relations; see also~\cite{CC} for a longer, more detailed version of~\cite{me}. The construction corresponds to a Frobenius algebra structure defined on $\mathbb{Z}[i][X,a]/(X^2-a),$ and for the case of links it is a categorification of the quantum $\mf{sl}(2)$-link invariant, thus of the unnormalized Jones polynomial of the link. Adding the relation $a=0$ or $a=1$ yields an isomorphic version of the $\mf{sl}(2)$ Khovanov homology~\cite{Kh1} or Lee's theory~\cite{L}, respectively.

The advantage of working with webs and foams instead of classical $(1+1)$--dimensional cobordisms, and of considering the fourth root of unity $i$ in the ground ring is that the construction brings up a theory that satisfies functoriality property in a proper sense, that is, with no sign ambiguity. In particular, it resolves the sign indeterminacy in the functoriality property of the Khovanov homology (see Bar-Natan~\cite{BN1}, Jacobsson~\cite{J} or Khovanov~\cite{Kh4} for proofs of the functoriality of Khovanov's invariant). 

In the first half of this paper, we generalize the construction described in~\cite{me} to obtain the \textit{universal} $\mf{sl}(2)$-link cohomology---universal in the sense of~\cite{Kh2}---given by $\mathbb{Z}[i][X,a,h]/(X^2 - hX -a),$ where $a$ and $h$ are formal parameters. The invariant of a tangle is a complex of graded free $\mathbb{Z}[i][a, h]$-modules, up to cochain homotopy, and its cohomology is a bigraded tangle cohomology theory. The good news is that the generalized theory still satisfies the functoriality property with no sign indeterminacy. 

Besides generalizing the construction in~\cite{me} and thus obtaining a better invariant, we go further in the second half of the paper to show more insights about the new theory. Since any surface-link can be regarded as a cobordism between empty links, our construction yields an invariant of such surfaces. We prove that the invariant of a surface-knot or surface-link depends only on its genus. We also work over $\mathbb{C}$ and take $a$ and $h$ to be complex numbers, instead of formal parameters. Inspired by the work of Mackaay and Vaz~\cite{MV}, we show that there are two isomorphism classes of the invariant of a link,  depending on the number of distinct roots of the polynomial $f(X) = X^2 - hX -a.$

We remark that the universal $\mf{sl}(2)$-link cohomology is equivalent to a perturbed Khovanov-Rozansky cohomology~\cite{KhR1} for $n =2,$ in which the ``potential'' $\omega(x) = x^3$ is replaced by $\omega(x, a, h) = x^3 -\frac{3}{2} hx^2 -3ax.$ The new potential is homogeneous of degree 6, with $\deg (a) = 4, \deg (h) = 2$ and $\deg(x) = 2.$ The corresponding perturbed Khovanov-Rozansky theory was described in~\cite{CC1}.

\section{\textbf{The $\mf{sl}(2)$-link invariant via webs}}\label{sec:webs}

For our purpose, we are interested in working with the $\mf{sl}(2)$-link invariant via an approach with \textit{webs}, and for this, we consider the oriented state model for the Jones polynomial instead of the classical approach. A web with boundary $B$ is a planar graph $\Gamma$---properly embedded in a disk $\mathcal{D}^2$---with bivalent vertices near which the edges are either oriented ``in'' $\raisebox{-4pt}{\includegraphics[height=.17in]{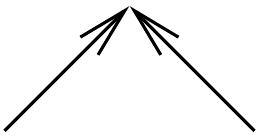}}$ or ``out'' $\raisebox{-4pt}{\includegraphics[height=.17in]{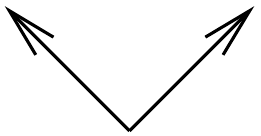}}$, and with univalent vertices that lie on the boundary of the disk $\mathcal{D}^2$. A closed web is a web with empty boundary ($B = \emptyset$). We also allow webs without vertices, which are oriented loops.

There is an ordering of the edges that join at a bivalent vertex, in the sense that each such vertex has a \textit{preferred edge}. If two edges oriented south-north share a bivalent vertex which is a `sink' or a `source', then the edge that goes in or goes out from the right, respectively, is the preferred edge of the corresponding bivalent vertex. If the two edges that share a bivalent vertex are oriented north-south, then one has to replace in the above definition the word ``right'' by ``left''. Two adjacent bivalent vertices are called \textit{of the same type} if the edge they share is either the preferred one or not, for both of them. For example, Figure~\ref{fig:types of vertices}(a) shows vertices of the same type; in the first drawing, the two vertices share their preferred edge, while in the second drawing, the preferred edges are on the sides of the picture. Otherwise, the vertices are called \textit{of different type}, as those given in Figure~\ref{fig:types of vertices}(b).

\begin{figure}[ht!]\begin{center}
\[(\text{a}) \begin{array}{c}\raisebox{-4pt}{\includegraphics[height=0.18in]{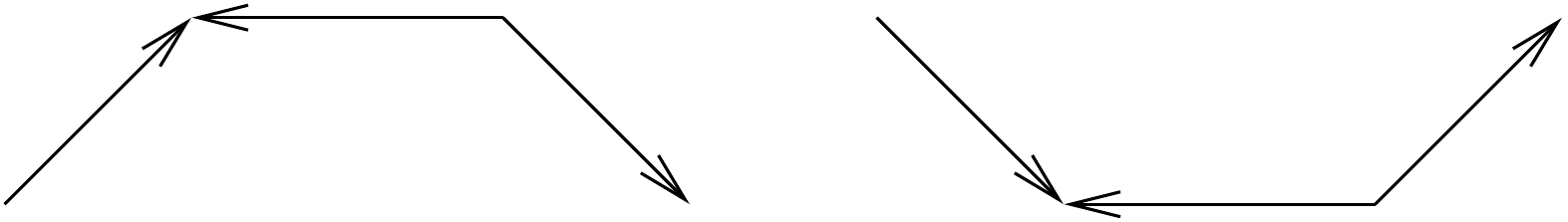}} \end{array} \qquad (\text{b})\begin{array}{c}\raisebox{-4pt}{\includegraphics[height=0.2in]{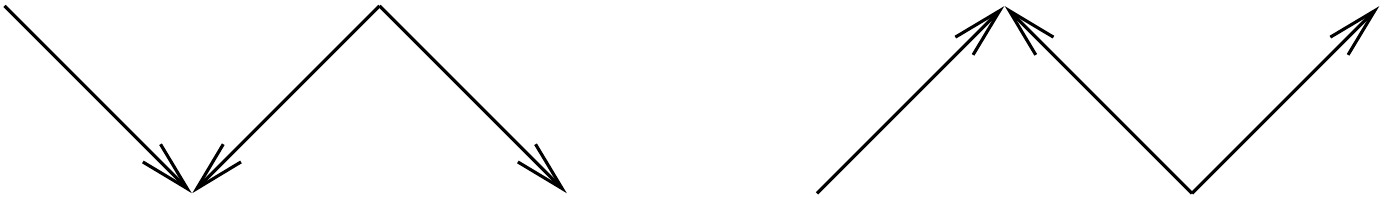}}\end{array}\]
\end{center}
\caption{Types of vertices}\label{fig:types of vertices}\end{figure}

Let $L$ be a link in $S^3.$ We fix a generic planar diagram $D$ of $L$ and replace each of its crossings by one of two planar pictures on the right:
\[ \raisebox{-15pt}{\includegraphics[height=0.5in]{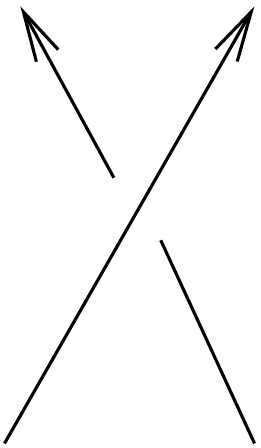}} \longrightarrow \raisebox{-15pt} {\includegraphics[height=0.5in]{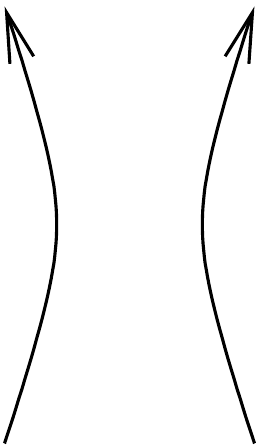}} \quad \text{and} \quad \raisebox{-15pt} {\includegraphics[height=0.5in]{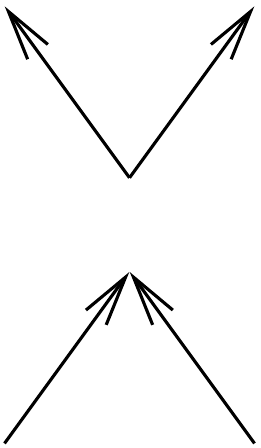}} \]
We call the resolution on the left the \textit{oriented resolution}, while the one on the right the \textit{singular resolution}. A diagram $\Gamma$ obtained by resolving all crossings of $D$ is a disjoint union of closed webs. Notice that for each singular resolution as depicted above, the preferred edges of the two bivalent vertices are on their right side. There is a unique way to associate a Laurent polynomial $\brak{\Gamma}$ to any closed web, so that it satisfies the web skein relations given in Figure~\ref{fig:web skein relations}.

\begin{figure}[ht!]
$$\xymatrix@R = 2mm{
\brak{\raisebox{-5pt}{\includegraphics[height=0.2in]{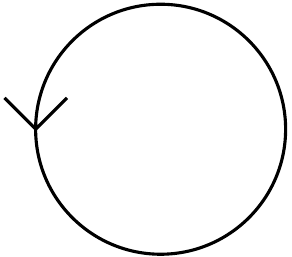}} \bigcup \Gamma} = (q + q^{-1}) \brak{\Gamma} =\brak{\raisebox{-5pt}{\includegraphics[height=0.2in]{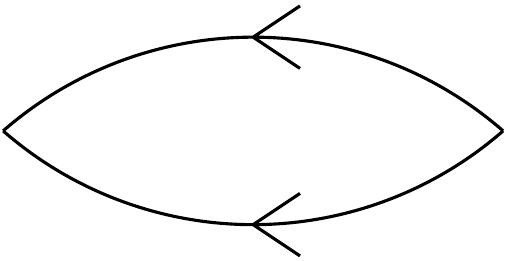}} \bigcup \Gamma} \\
\brak{\raisebox{-5pt}{\includegraphics[height=0.12in]{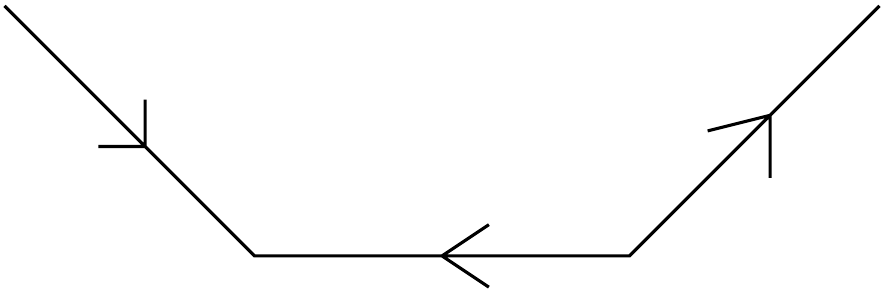}}} = 
\brak{\raisebox{-5pt}{\includegraphics[height=0.12in]{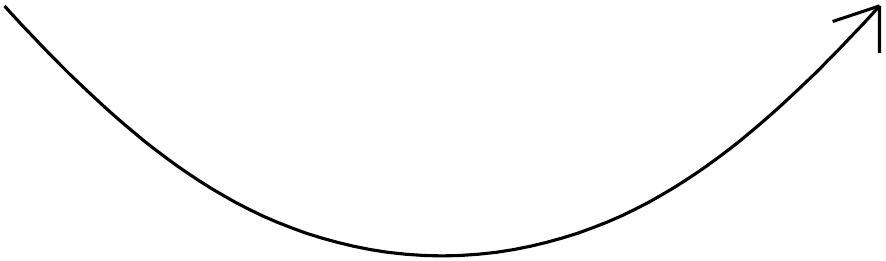}}}, \quad
\brak{\raisebox{-5pt}{\includegraphics[height=0.12in]{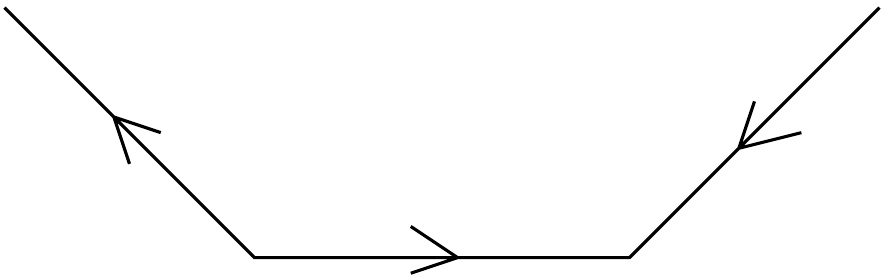}}} = 
\brak{\raisebox{-5pt}{\includegraphics[height=0.12in]{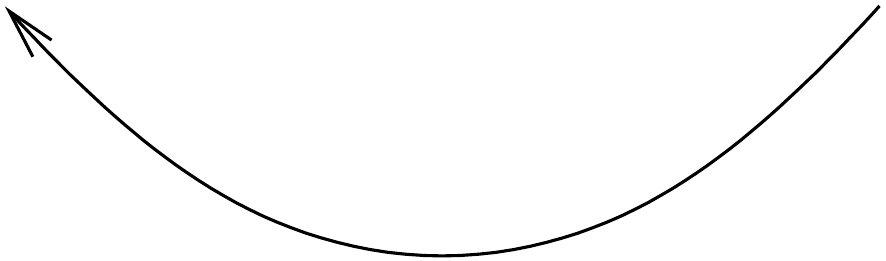}}}
}$$
\caption{Web skein relations}
\label{fig:web skein relations}
\end{figure}

We define $P_2(D) = \sum_{\Gamma} \pm q^{\alpha(\Gamma)}\brak{\Gamma},$ where the sum is over all resolutions of $D,$ and the exponents $\alpha(\Gamma)$ and the sign $\pm$ are determined by the relations in Figure~\ref{fig:decomposition of crossings}.  
\begin{figure}[ht!]
\begin{center}
$\xymatrix@R=2mm{
\raisebox{-13pt}{\includegraphics[height=0.4in]{poscrossing.pdf}} \,= \,q\,\,\,\,\,\raisebox{-13pt} {\includegraphics[height=0.4in]{orienres.pdf}} - q^2\,\,\,\,\,\raisebox{-13pt} {\includegraphics[height=0.4in]{singres.pdf}}\\
\raisebox{-13pt}{\includegraphics[height=0.4in]{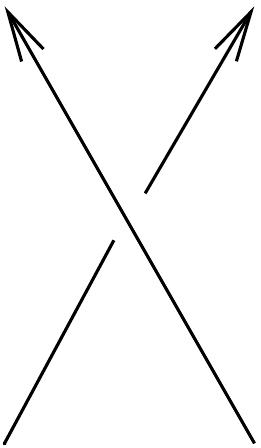}} \,= \,q^{-1}\,\raisebox{-13pt} {\includegraphics[height=0.4in]{orienres.pdf}} - q^{-2}\, \raisebox{-13pt} {\includegraphics[height=0.4in]{singres.pdf}}
}$\end{center}
\caption{Decomposition of crossings}
\label{fig:decomposition of crossings}
\end{figure}

If $D_1$ and $D_2$ are link diagrams that are related by a Reidemeister move, then $P_2(D_1) = P_2(D_2),$ hence $ P_2(L)= P_2(D)$ is an invariant of the oriented link $L.$ Excluding rightmost terms from the equations in Figure~\ref{fig:decomposition of crossings}, we obtain the well-known skein relation for computing the quantum $sl(2)$-link invariant:
$$q^2 \,\,\raisebox{-13pt}{\includegraphics[height=.4in]{negcrossing.pdf}} - q^{-2}\,\, \raisebox{-13pt}{\includegraphics[height=.4in]{poscrossing.pdf}} = (q - q^{-1}) \,\,\raisebox{-13pt}{\includegraphics[height=.4in]{orienres.pdf}}$$ 

\section{\textbf{The category of foams}}\label{sec:foams}

A \textit{foam} is a cobordism between two webs $\Gamma_0$ and $\Gamma_1$ with boundary $B,$ regarded up to boundary-preserving isotopy. More precisely, a foam is a  piecewise oriented  2-dimensional manifold $S$ with boundary $ \partial{S } = -\Gamma_1 \cup \Gamma_0 \cup B \times [0,1]$ and corners $B \times \{0\} \cup B \times \{1\},$ where the manifold $-\Gamma _1$ is $\Gamma_1$ with the opposite orientation. All foams are bounded within a cylinder, and the part of their boundary on the sides of the cylinder is the union of vertical straight lines. If $\Gamma_0$ and $\Gamma_1$ are closed webs, a foam from $\Gamma_0$ to $\Gamma_1$ is embedded in $\mathbb{R}^2 \times [0,1]$ and its boundary lies entirely in $\mathbb{R}^2 \times \{0,1\}.$ We read foams as morphisms from bottom to top by convention, and we compose them by placing one cobordism on top the other.

Foams have \textit{singular arcs} (and/or \textit{singular circles}) where orientations disagree. The two facets on the two sides of a singular arc have opposite orientation, and because of this, the two facets induce the same orientation on the singular arc. Specifically, the orientation of singular arcs is as in Figure~\ref{fig:saddle}, which shows examples of piecewise oriented saddles.

\begin{figure}[ht!]
\begin{center}
\includegraphics[height=.65in]{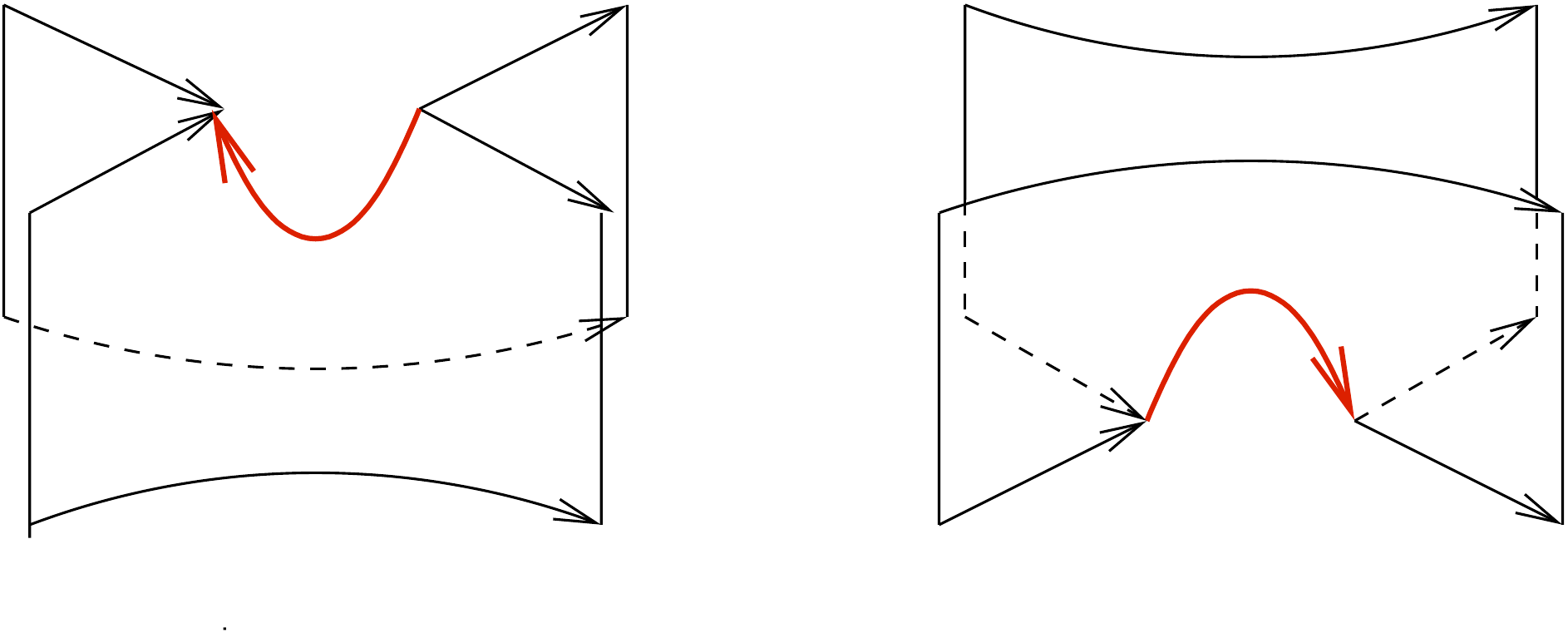}\end{center}
\caption{Examples of piecewise oriented saddles}
\label{fig:saddle}
\end{figure}

For each singular arc, there is an ordering of the facets that are incident with it, in the sense that one of the facets is the \textit{preferred facet} for the corresponding singular arc. This ordering is induced by the ordering of edges at bivalent vertices, in the following sense: the preferred facet of a singular arc contains in its boundary the preferred edges of the two bivalent vertices that it connects. In particular, a pair of bivalent vertices can be connected by a singular arc only if the above rule is satisfied. 
If the preferred facet of a singular arc is at its left (where the concept of ``left'' and ``right'' is given by the orientation of the singular arc), then we will usually represent the singular arc by a continuous red curve. Otherwise, it will be represented by a dashed red curve. In Figure~\ref {fig:ordering of facets} we have two examples of foams with boundary and singular arcs (we labeled with 1 the preferred facets for the given singular arcs). 
\begin{figure}[ht!]\begin{center}
\raisebox{-8pt}{\includegraphics[height=.5in]{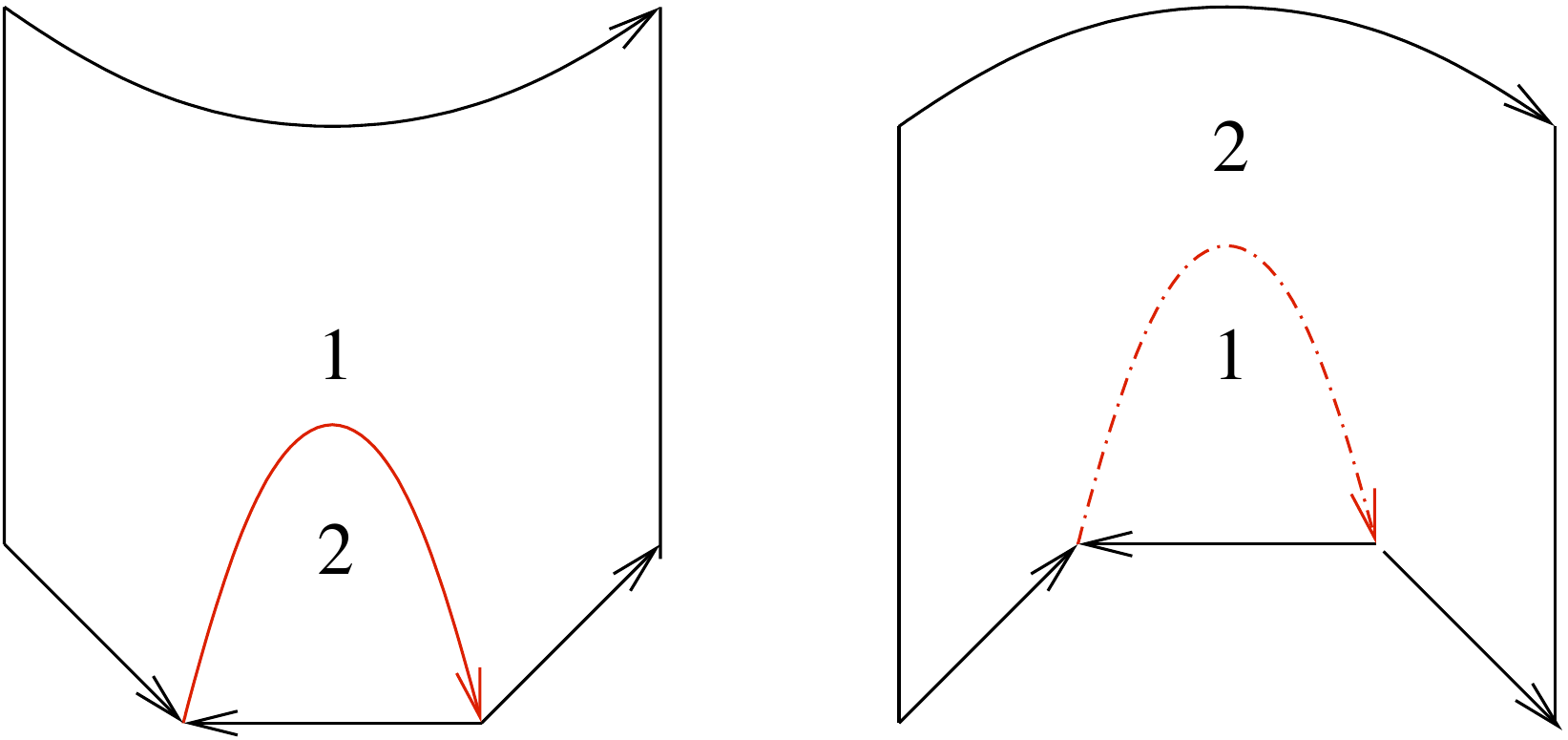}}\end{center}
\caption{Ordering of facets near a singular arc}\label{fig:ordering of facets}
\end{figure}

We say that two foams are \textit{isomorphic} if they differ by an isotopy during which the boundary is fixed. A cobordism from the empty web to itself gives rise to a foam with empty boundary, therefore a \textit{closed foam}. Figure~\ref{fig:ordering of facets of ufo} shows an example of a closed foam, called the \textit{ufo}-foam, with the two choices of ordering of its facets. In what follows, we fix the ordering of the \textit{ufo}'s facets as shown in the picture on the left.

\begin{figure}[ht!]\begin{center}
\raisebox{-8pt}{\includegraphics[height=.4in]{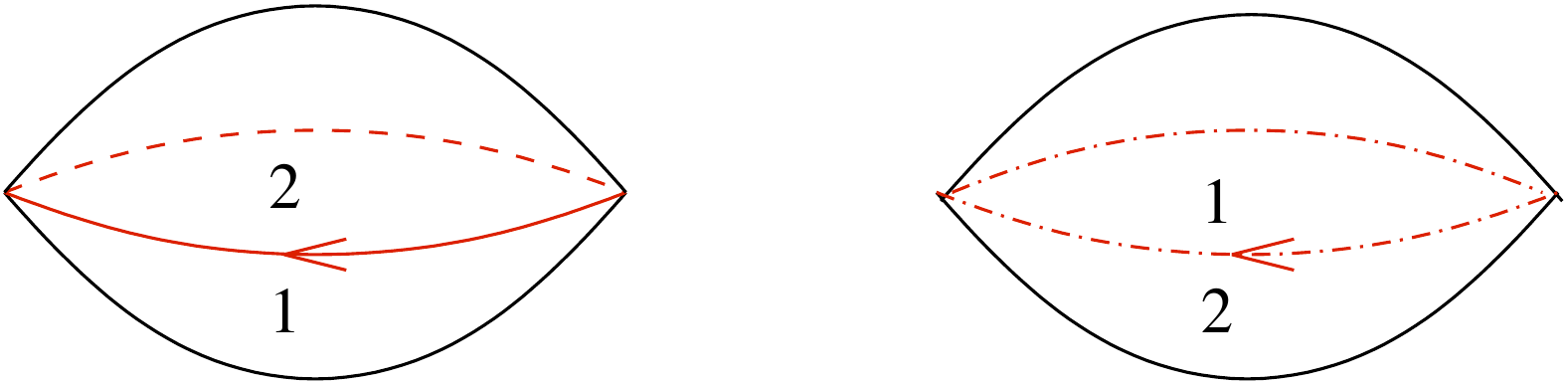}}\end{center}
\caption{ufo-foams and the ordering of their facets}\label{fig:ordering of facets of ufo}
\end{figure}
Finally, foams can have dots that are allowed to move freely along the facet they belong to, but can't cross singular arcs.

We denote by $\textit{Foams}(B)$ (or $\textit{Foams}(\emptyset)$) the category whose objects are web diagrams with boundary $B$ (or empty boundary) and whose morphisms are foams between such webs. We use notation $\textit{Foams}$ as a generic reference either to $\textit{Foams}(\emptyset)$ or to $\textit{Foams}(B),$ for some finite set $B.$


\subsection{A (1 + 1)-dimensional TQFT with dots} \label{sec:TQFT} 

Consider the polynomial ring $\mathbb{Z}[i][a, h]$ with Gaussian integer coefficients, and define a grading on it by letting $\deg (1) = 0 = \deg(i), \deg(a) = 4$, and $\deg(h) = 2.$

Let $\mathcal{A} = \mathbb{Z}[i][a, h, X]/(X^2 -hX- a)$ be the $\mathbb{Z}[i][a, h]$-module with generators $1$ and $X,$ and with inclusion map $\iota \co \mathbb{Z}[i][a, h] \rightarrow \mathcal{A}, \iota(1) = 1.$ The ring $\mathcal{A}$ is commutative Frobenius with the trace map $ \epsilon \co \mathcal{A} \rightarrow \mathbb{Z}[i][a, h], \text{where}\,\, \epsilon(1) = 0, \, \epsilon(X) = 1.$

Multiplication $m \co \mathcal{A} \otimes \mathcal{A} \rightarrow \mathcal{A}$ and comultiplication $\Delta \co \mathcal{A} \rightarrow \mathcal{A} \otimes \mathcal{A}$ are defined by
$$ \begin{cases}
 m(1 \otimes X) =X, & m(X \otimes 1) = X\\ 
m(1 \otimes 1) =1, & m(X \otimes X) =hX+ a
 \end{cases},\quad
 \begin{cases}
 \Delta(1) = 1 \otimes X + X \otimes 1-h1\otimes 1\\ 
 \Delta(X) = X \otimes X + a 1 \otimes 1.\end{cases} $$

We make $\mathcal{A}$ graded by setting $\deg(1) = -1$ and $\deg(X) = 1.$ The multiplication and comultiplication are maps of degree $1,$ while $\epsilon$ and $\iota$ are maps of degree $-1.$

The commutative Frobenius algebra $\mathcal{A}$ gives rise to a functor---denoted here by $\mathsf{F}$---from the category of oriented $(1+1)$--dimensional cobordisms to the category of graded $\mathbb{Z}[i][a, h]$-modules. The functor assigns the ground ring $\mathbb{Z}[i][a, h]$ to the empty 1-manifold, and $\mathcal{A}^{\otimes k}$ to the disjoint union of oriented $k$ circles (the tensor product is taken over $\mathbb{Z}[i][a,h]$). 

On the generating morphisms of the category of oriented $(1+1)$-cobordisms, the functor $\mathsf{F}$ is defined as follows: 
$\mathsf{F}(\raisebox{-2pt}{\includegraphics[height=0.13in]{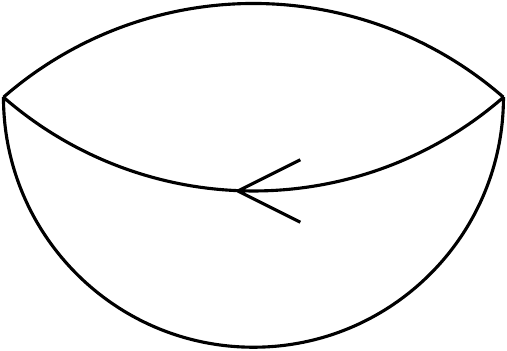}}) = \iota,\,  \mathsf{F}(\raisebox{-2pt}{\includegraphics[height=0.14in]{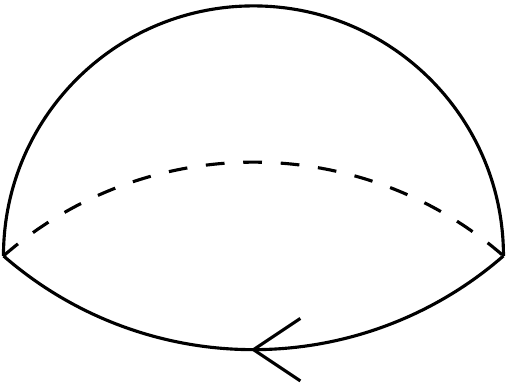}}) = \epsilon,\,\mathsf{F}(\raisebox{-3pt}{\includegraphics[height=0.18in]{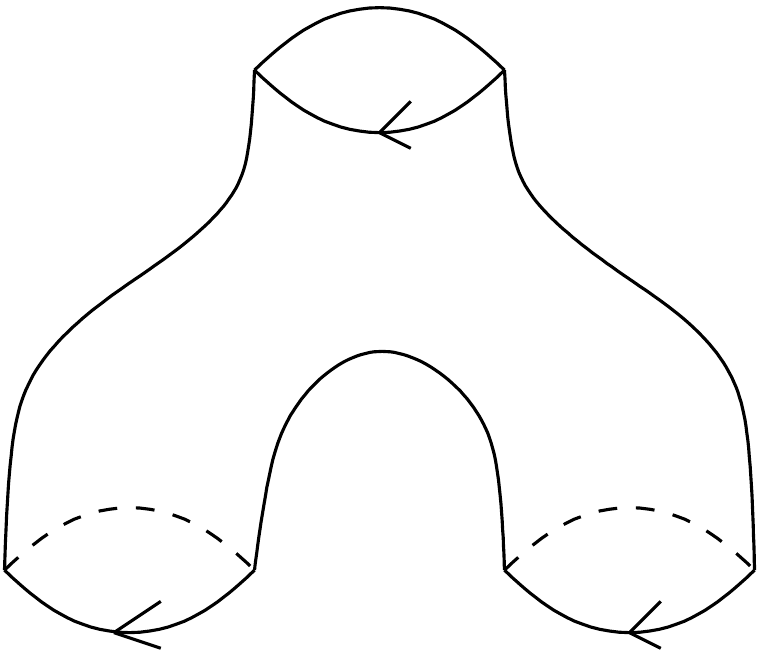}}) = m, \, \mathsf{F}(\raisebox{-3pt}{\includegraphics[height=0.18in]{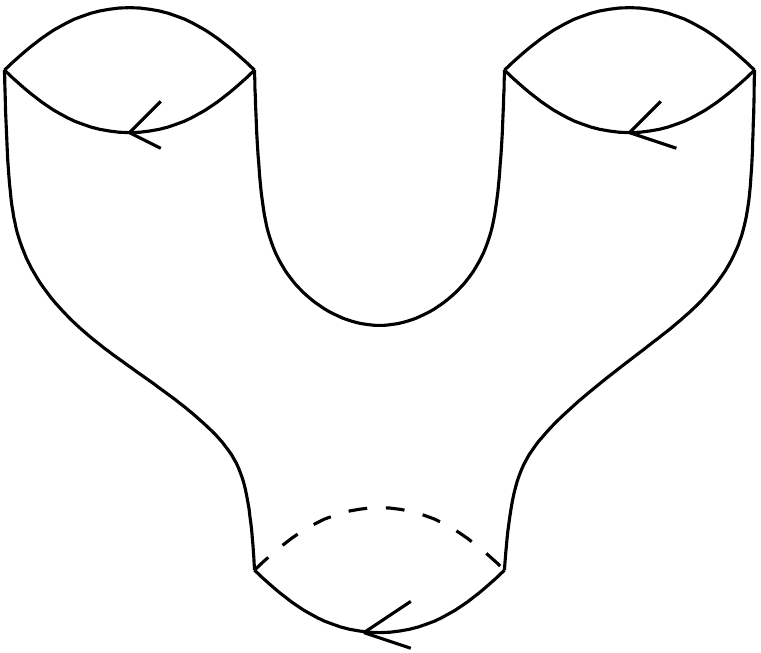}}) = \Delta.$

The annulus $S^1 \times [0,1]$ is the identity cobordism from a circle to itself, and $\mathsf{F}$ associates to it the identity map $\id: \mathcal{A} \rightarrow \mathcal{A}.$

A dot on a surface will denote multiplication by $X.$ The functor $\mathsf{F}$ asociates to the annulus $S^1 \times [0,1]$ with a dot the multiplication by $X$ endomorphism of $\mathcal{A},$ thus it associates the map $\mathcal{A} \to \mathcal{A}$ which takes $1$ to $X$ and $X$ to $X^2 = hX + a.$ Here we used that $X^2 - hX - a = 0$ in $\mathcal{A},$ which also gives the algebraic interpretation of a twice dotted surface, as explained below. The cobordism \raisebox{-2pt}{\includegraphics[height=0.13in]{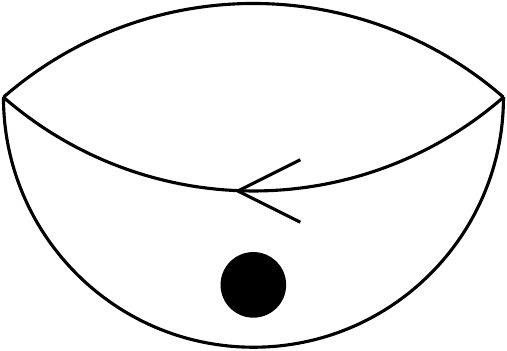}} can be regarded as the composition of the ``cup" cobordism \raisebox{-2pt}{\includegraphics[height=0.13in]{cuplo.pdf}} with the singly dotted annulus, and $\mathsf{F}(\raisebox{-2pt}{\includegraphics[height=0.13in]{cuplod.pdf}})$ produces the map $\mathbb{Z}[i][a,h] \to \mathcal{A}$ which takes $1$ to $X,$ obtained by composing $\iota$ with the multiplication by $X$ endomorphism of $\mathcal{A}.$ Moreover, since the cobordism $\raisebox{-2pt}{\includegraphics[height=0.13in]{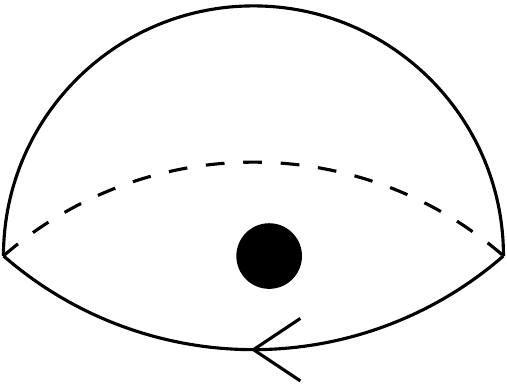}}$ can be regarded as the composition of the singly dotted annulus with the ``cap" cobordism $\raisebox{-2pt}{\includegraphics[height=0.13in]{caplo.pdf}},$ to define $\mathsf{F}(\raisebox{-2pt}{\includegraphics[height=0.13in]{caplod.pdf}})$ we compose the multiplication by $X$ endomorphism of $\mathcal{A}$ with $\epsilon.$ Therefore, $\mathsf{F}(\raisebox{-2pt}{\includegraphics[height=0.13in]{caplod.pdf}})$ stands for the map $\mathcal{A} \to \mathbb{Z}[i][a,h]$ which sends $1$ to $1$ and $X$ to $h.$ Two dots on a surface stand for  multiplication by $X^2 = hX + a.$ 

We want to extend the functor $\mathsf{F}$ to the subcategory of $\textit{Foams}(\emptyset)$ whose objects are disjoint unions of clockwise and counterclockwise oriented circles, and whose morphisms are foams between such 1-manifolds. For this purpose, we define the following maps associated to annuli with a singular circle:
\[  \raisebox{-22pt}{\includegraphics[height =0.7in]{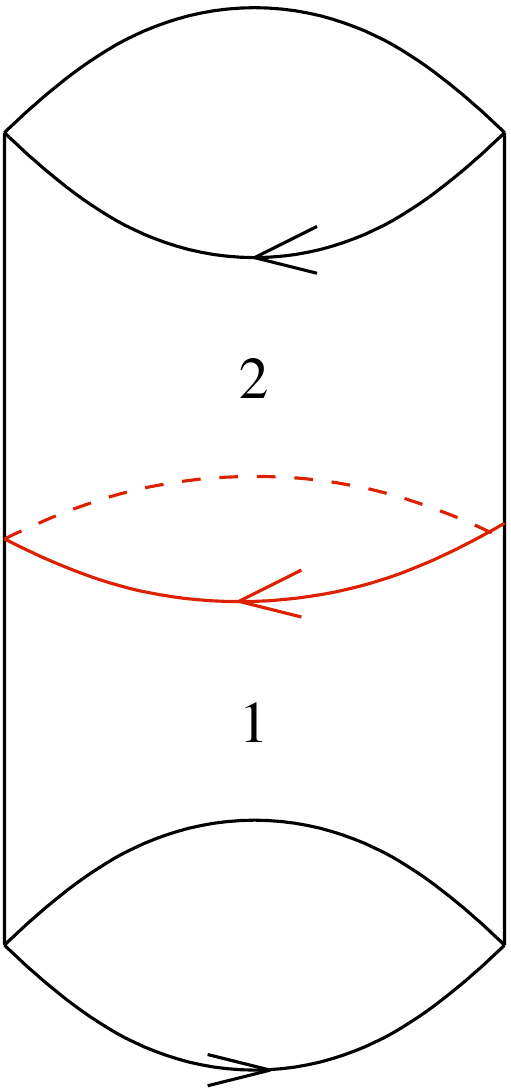}} \co \mathcal{A} \to \mathcal{A}, \begin{cases}1 \mapsto -i\\X \mapsto -i(h-X) \end{cases} \hspace{1cm}  \raisebox{-22pt}{\includegraphics[height =0.7in]{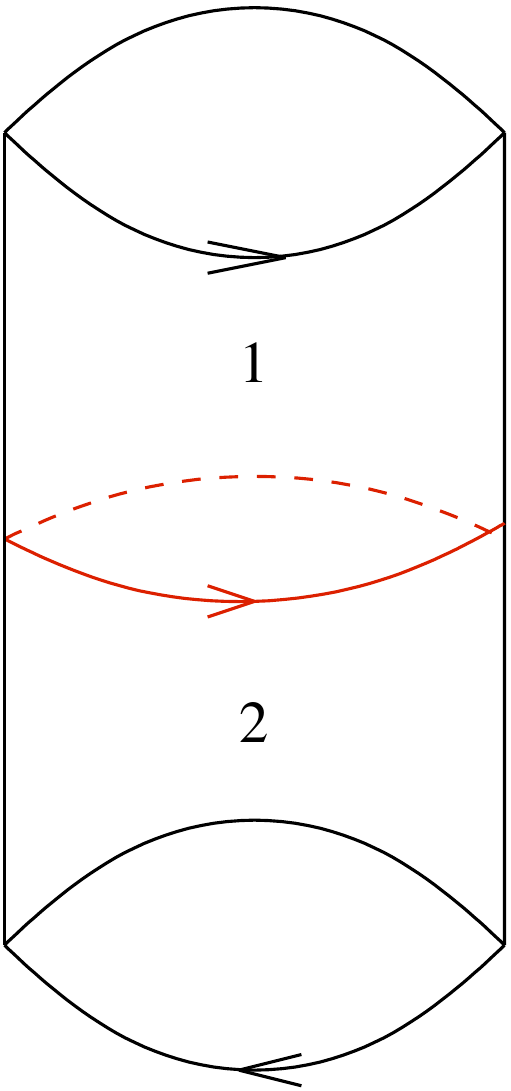}} \co \mathcal{A} \to \mathcal{A}, \begin{cases}1 \mapsto i\\X \mapsto i(h-X). \end{cases}\]
Consequently, we have the following maps for the particular foams depicted below:
  $$ \raisebox{-8pt}{\includegraphics[width=0.35in,height =0.35in]{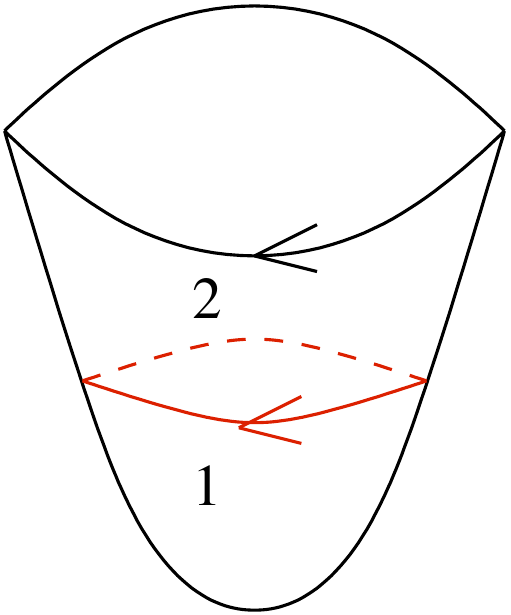}} \co \mathbb{Z}[i][a] \longrightarrow \mathcal{A},1\rightarrow -i \hspace{2cm} \raisebox{-8pt}{\includegraphics[width=0.35in,height =0.35in]{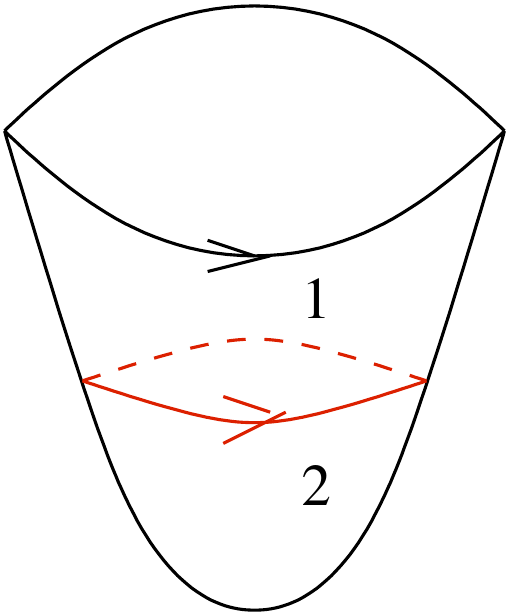}} \co \mathbb{Z}[i][a] \longrightarrow \mathcal{A}, 1\rightarrow i$$
  $$\raisebox{-8pt}{\includegraphics[width=0.35in, height =0.35in]{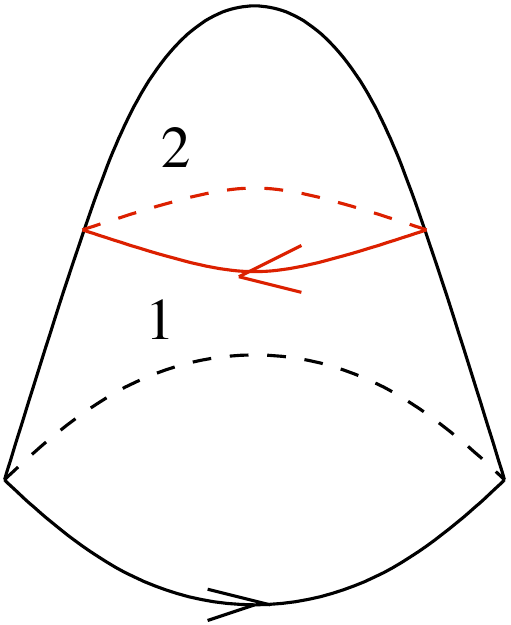}}:\mathcal{A}\longrightarrow \mathbb{Z}[i][a], \begin{cases} 1\rightarrow 0 \\X\rightarrow i \end{cases} \hspace{1cm}
\raisebox{-8pt}{\includegraphics[width=0.35in, height =0.35in]{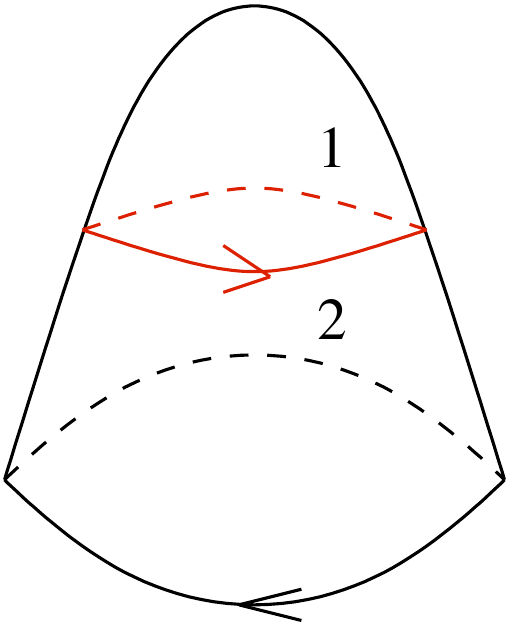}}:\mathcal{A}\longrightarrow \mathbb{Z}[i][a], \begin{cases} 1\rightarrow 0\\ X\rightarrow -i.\end{cases}$$
Therefore, $\mathsf{F}$ extends to a functor from the category of  dotted foams between oriented circles to the category of graded $\mathbb{Z}[i][a,h]$-modules, since any connected dotted foam whose boundary components are (clockwise and/or counterclockwise) oriented circles can be decomposed into annuli with exactly one singular circle, dotted annuli, and the generating morphisms of the category of oriented $(1+1)$-cobordisms.

Given a cobordism $S$ with $d$ dots, the homomorphism $\mathsf{F}(S)$ has degree given by the formula $\deg(S) =  -\chi(S) + 2d,$ where $\chi$ is the Euler characteristic of $S.$ Note that the functor $\mathsf{F}$ is degree-preserving.

\subsection{Local relations}\label{sec:relations l}

We mod out the morphisms of the category \textit{Foams} by the local relations $\ell$ = (2D, SF, S, UFO) below, and denote by $\textit{Foams}_{/\ell}$ the quotient of the category $\textit{Foams}$ by these relations.
$$\xymatrix@R=2mm
{
\raisebox{-5pt}{
\includegraphics[height=0.2in]{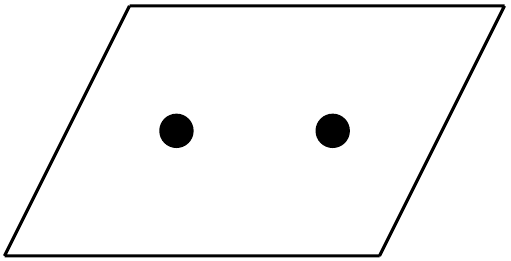}}= h\raisebox{-5pt}{\includegraphics[height=0.2in]{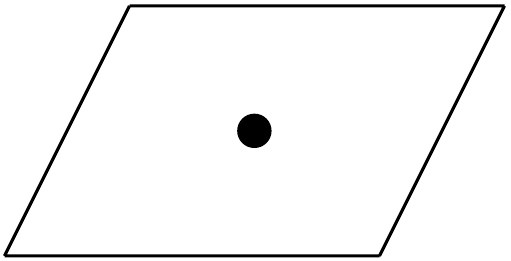}} +
a\raisebox{-5pt}{
\includegraphics[height=0.2in]{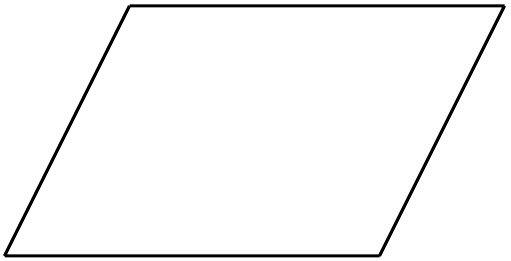}}
&\text{(2D)}\\
\raisebox{-18pt}{
\includegraphics[height=0.6in]{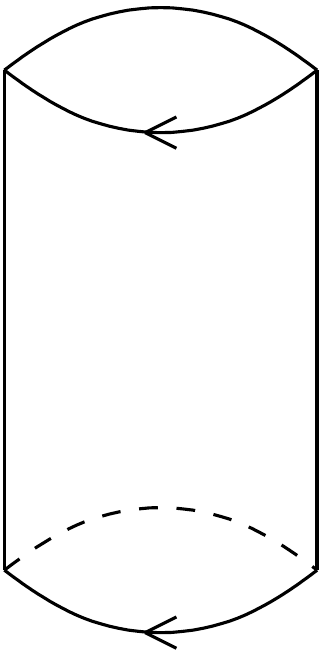}}=
\raisebox{-18pt}{
\includegraphics[height=0.6in]{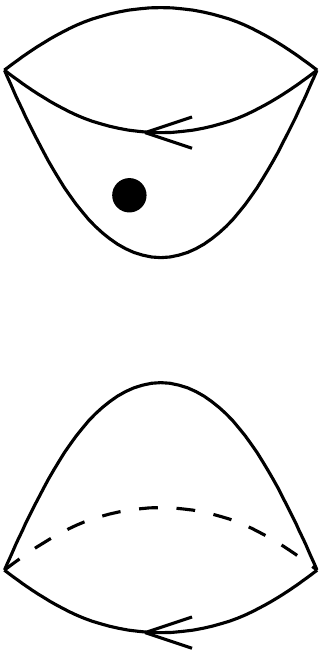}}+
\raisebox{-18pt}{
\includegraphics[height=0.6in]{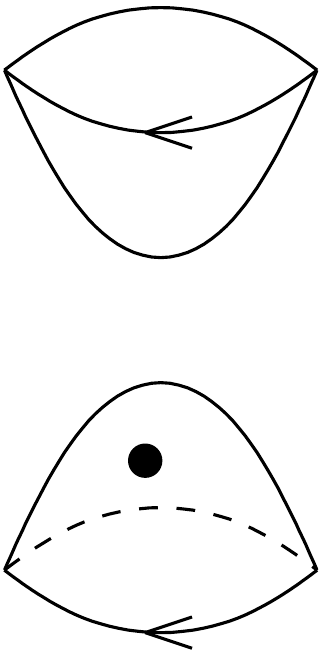}}-h
\raisebox{-18pt}{
\includegraphics[height=0.6in]{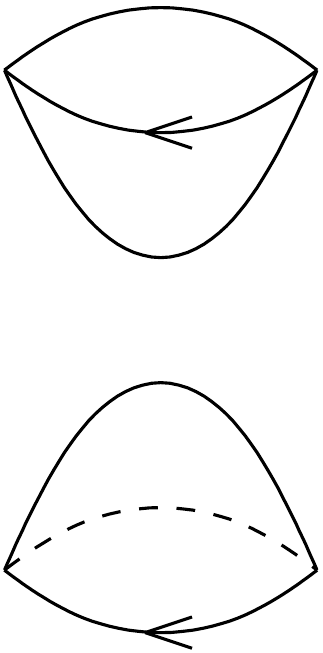}}
&\text{(SF)} \\
\raisebox{-10pt}{\includegraphics[width=0.4in]{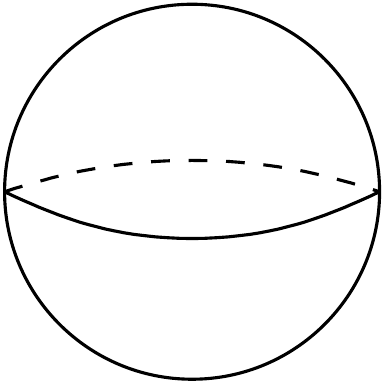}}=0,\quad
\raisebox{-10pt}{\includegraphics[width=0.4in]{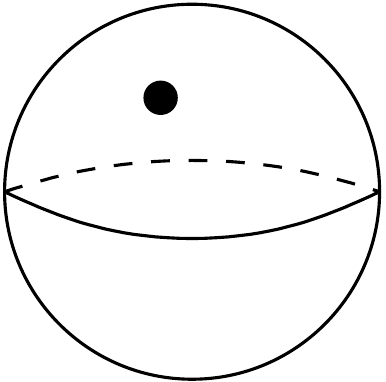}}=1
&\text{(S)} \\
\raisebox{-10pt}{\includegraphics[width=0.5in]{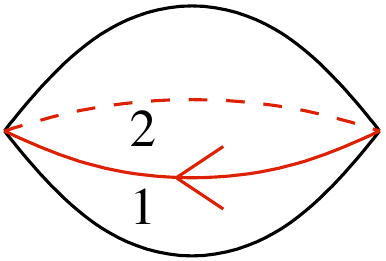}}=0=
\raisebox{-10pt}{\includegraphics[width=0.5in]{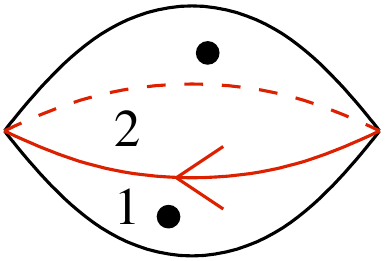}},\quad
\raisebox{-10pt}{\includegraphics[width=0.5in]{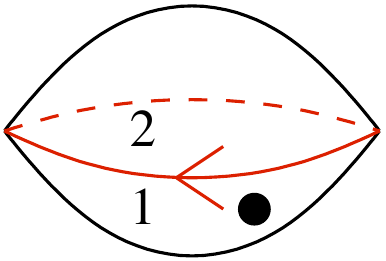}}=i= 
-  \raisebox{-10pt}{\includegraphics[width=0.5in]{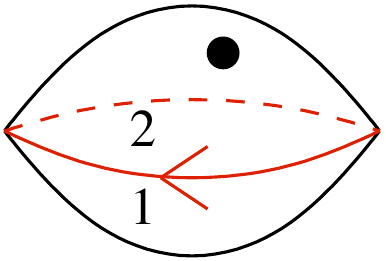}}
&\text{(UFO)} 
}$$
When there are two or more dots on a facet of a foam we can use the (2D) relation to reduce it to the case when there is at most one dot. 
From the surgery formula (SF) we obtain the following \textit{genus reduction} formula 
$$\quad \raisebox{-3pt}{\includegraphics[height=0.3in, width =0.3in]{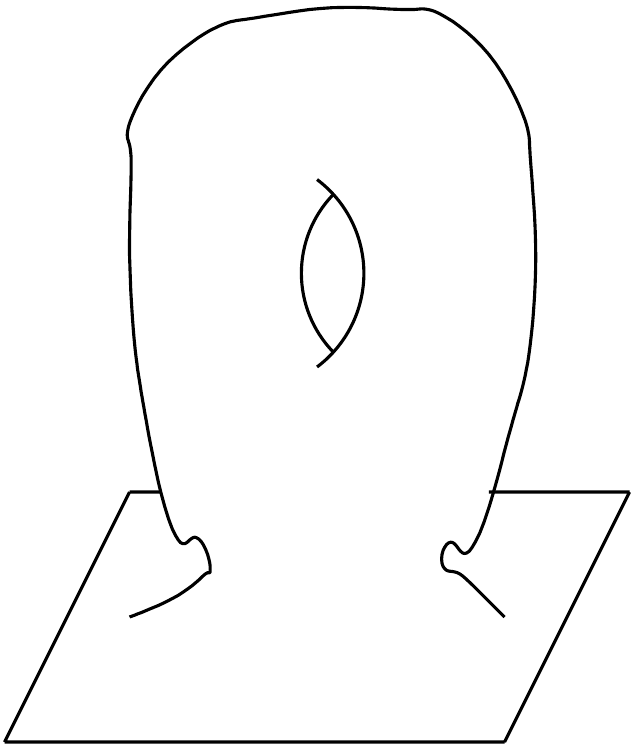}} = 2
\raisebox{-5pt}{\includegraphics[width=0.35in]{plane1d.pdf}} - h\raisebox{-5pt}{
\includegraphics[width=0.35in]{plane.pdf}},$$
 which in particular yields
$$\raisebox{-5pt}{\includegraphics[width=0.4in]{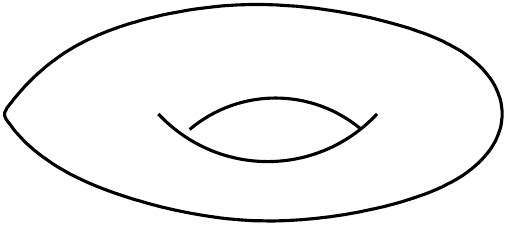}}=2,\,\, \,
\raisebox{-5pt}{\includegraphics[width=0.4in]{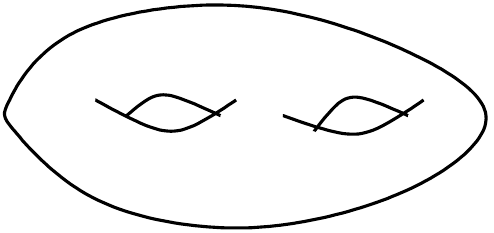}}=0,\,\,\, \raisebox{-5pt}{\includegraphics[width=0.4in]{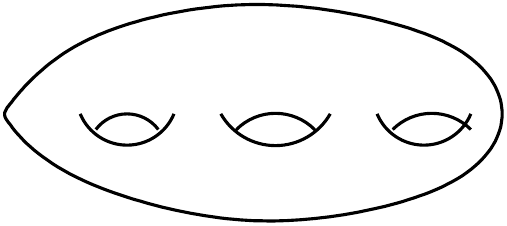}}=2h^2 + 8a.$$

A closed foam $S$ can be viewed as a morphism from the empty web to itself. By the relations $\ell$, we assign to $S$ an element of $\mathbb{Z}[i][a,h],$ called the \textit{evaluation} of $S$ and denoted by 
$\mathcal{F}(S).$ We view $\mathcal{F}$ as a functor from the category $\textit{Foams}_{/\ell}(\emptyset)$ to the category of $\mathbb{Z}[i]h[a]$-modules. 

\begin{remark} The following statements hold, and can be verified similarly as their analogous in~\cite[Section 4]{CC}. 
\begin{enumerate}
\item The functor $\mathsf{F}$ introduced in Section~\ref{sec:TQFT} satisfies the local relations $\ell$. In particular, $\mathsf{F}$ descends to a functor from $\textit{Foams}_{/\ell}(\emptyset)$ to $\mathbb{Z}[i][a, h]$-Mod.
\item The set of local relations $\ell$ are consistent and determine uniquely the \linebreak evaluation of every closed foam. \end{enumerate}
\end{remark}

The evaluation of closed foams is multiplicative with respect to disjoint unions of closed foams:
$\mathcal{F}(S_1 \cup S_2) = \mathcal{F}(S_1) \mathcal{F}(S_2)$. If $S'$ and $S$ are closed foams such that $S'$ is obtained from $S$ by reversing the ordering of the facets at a singular circle, then
$\mathcal{F}(S')  = - \mathcal{F}(S)$. Moreover, relations $\ell$ imply the identities (ED) depicted in Figure~\ref{fig:exchanging dots}, which establish the way we can exchange dots between two neighboring facets. \begin{figure}[ht!]
$$\xymatrix@R=2mm{
\raisebox{-22pt}{\includegraphics[height=.6in]{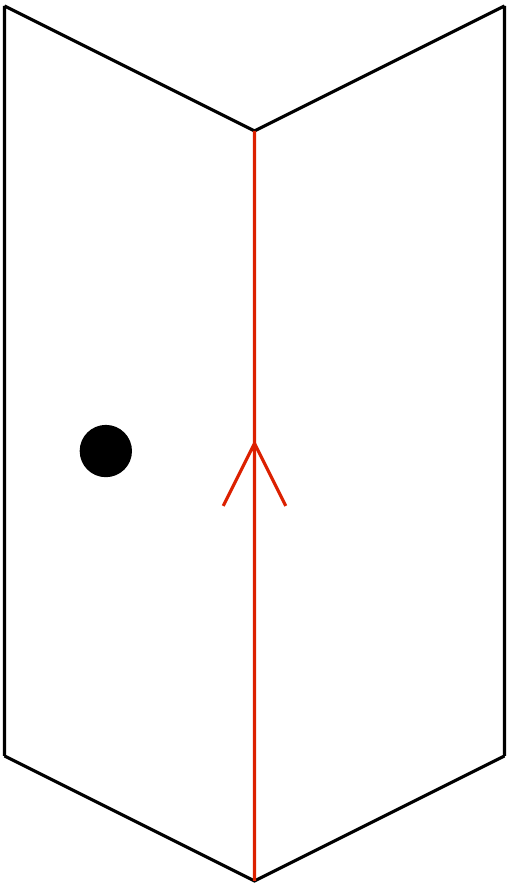}} +
\raisebox{-22pt}{\includegraphics[height=.6in]{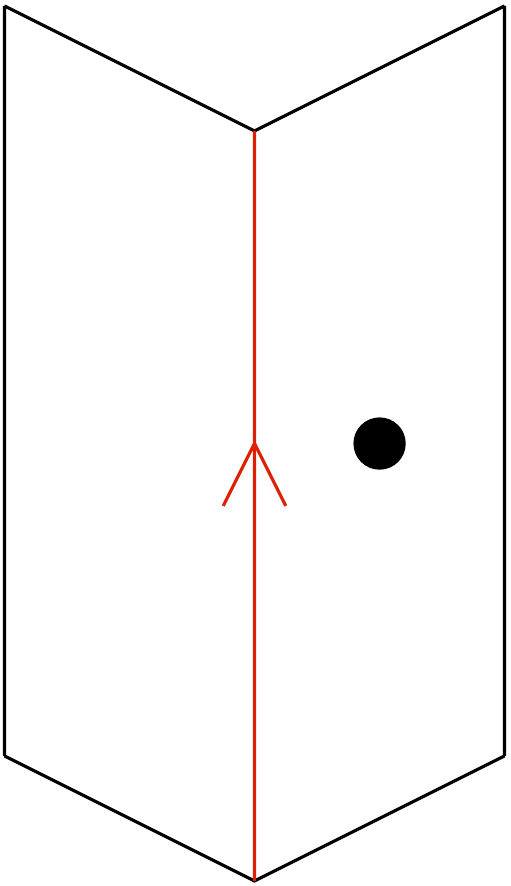}} = h\,\,\raisebox{-22pt}{\includegraphics[height=.6in]{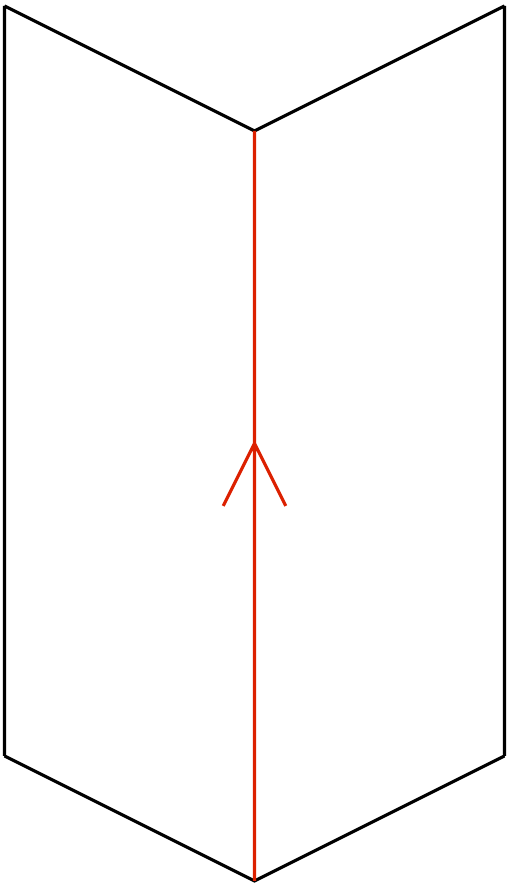}} \quad , \quad
\raisebox{-22pt}{\includegraphics[height=.6in]{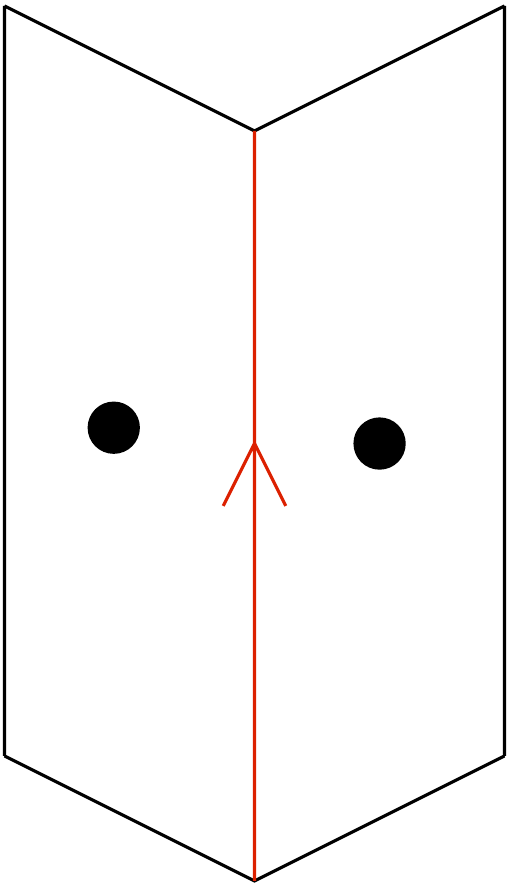}} = -a\, \,
 \raisebox{-22pt}{\includegraphics[height=.6in]{exch.pdf}} & \text{(ED)}
}$$
\caption{Exchanging dots between facets}
\label{fig:exchanging dots}
\end{figure}

\begin{definition}\label{def:quotient category}
For webs $\Gamma, \Gamma',$ foams $S_i \in \Hom_{Foams_{/\ell}}(\Gamma,\Gamma')$ and $c_i \in \mathbb{Z}[i][a,h]$ we say that $\sum_i c_iS_i = 0$ if and only if $\sum_i c_i \mathcal{F}( V'S_i V) = 0$ holds, for any foam $V \in \Hom_{Foams_{/\ell}}(\emptyset, \Gamma)$ and $V' \in \Hom_{Foams_{/\ell}}(\Gamma', \emptyset).$ 
\end{definition}

\begin{definition}
Let $S$ be a foam with $d$ dots in $\textit{Foams}(B).$ The degree formula defined in Section~\ref{sec:TQFT} extends to foams in the natural way, and we define the \textit{grading} of $S$ by $\deg(S) = -\chi(S) + \frac{1}{2}\vert B\vert +2d,$ where $\chi$ is the Euler characteristic and $\vert B\vert$ is the cardinality of $B.$\end{definition}

We give below the degrees of some of the basic foams we work with.
\begin{align*}
&\text{deg} \left(\raisebox{-5pt}{\includegraphics[height=0.28in]{cuplo.pdf}}\right) = 
\text{deg} \left(\raisebox{-5pt}{\includegraphics[height=0.3in]{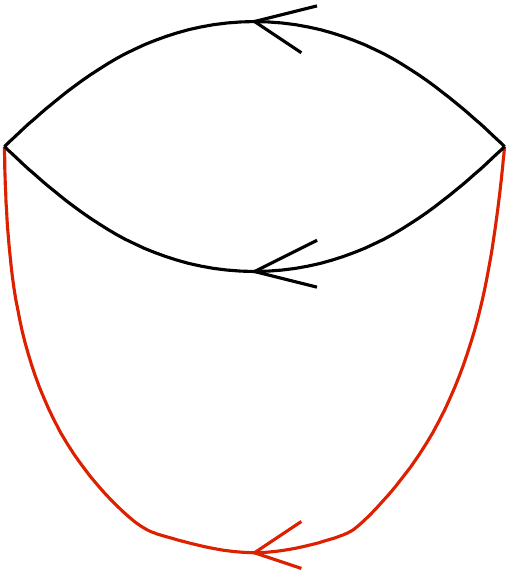}} \right) = 
\text{deg} \left(\raisebox{-5pt}{\includegraphics[height=0.28in]{caplo.pdf}} \right) = 
\text{deg} \left(\raisebox{-5pt}{\includegraphics[height=0.3in]{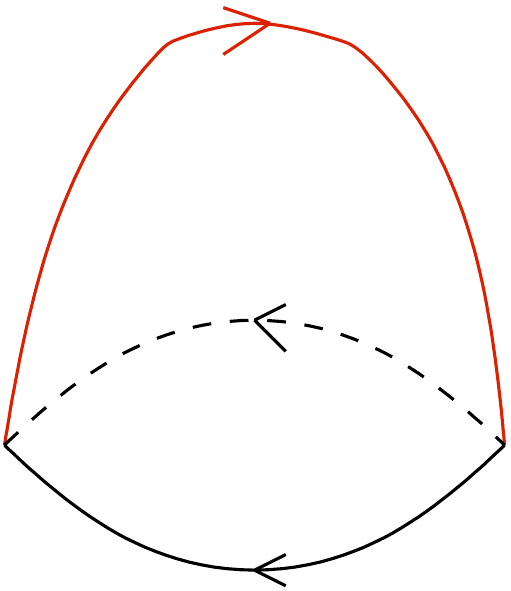}} \right) = -1,\\
&\text{deg} \left(\raisebox{-5pt}{\includegraphics[height=0.28in]{cuplod.pdf}} \right) = 
\text{deg} \left(\raisebox{-5pt}{\includegraphics[height=0.3in]{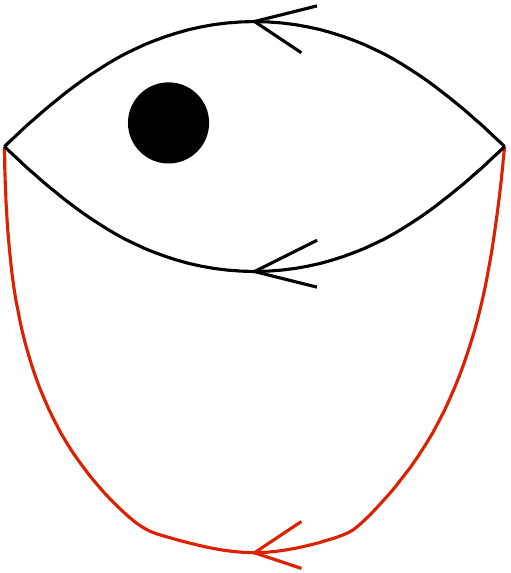}} \right) = 
\text{deg} \left(\raisebox{-5pt}{\includegraphics[height=0.28in]{caplod.pdf}} \right) = 
\text{deg} \left(\raisebox{-5pt}{\includegraphics[height=0.3in]{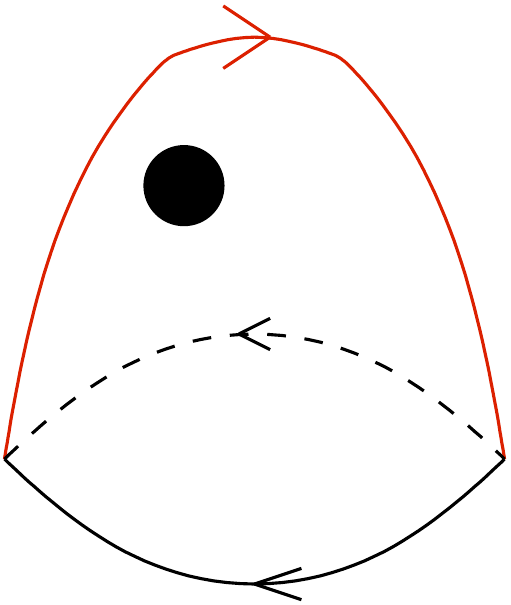}} \right) = 1,\\
&\text{deg} \left(\raisebox{-8pt}{\includegraphics[height=0.35in]{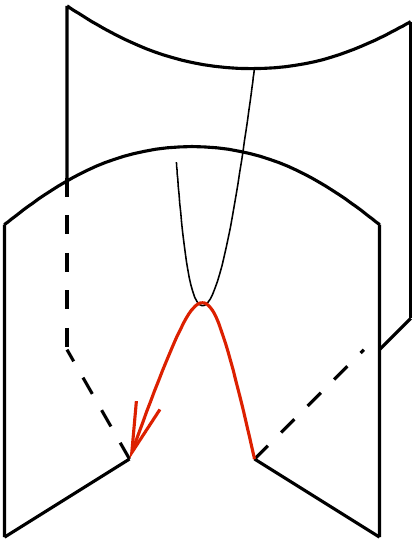}}\right) = \text{deg} \left(\raisebox{-8pt}{\includegraphics[height=0.35in]{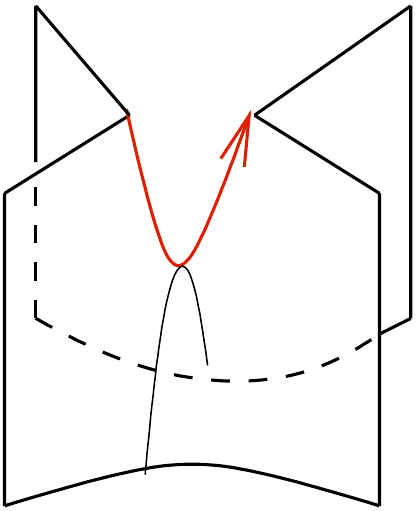}}\right) = 1.
\end{align*}
Note that for any composable foams $S_1, S_2$ we have: $\deg(S_1S_2) = \deg(S_1) + \deg(S_2),$ and that the local relations $\ell$ are degree preserving. Therefore, both categories $\textit{Foams}$ and $\textit{Foam}_{/\ell}$ are graded. 

The next two lemmas and corollaries are proved exactly the same as in~\cite{CC}.
\begin{lemma}\label{handy relations}
The following ``sheet relations'' (SR) hold in $\textit{Foams}_{/\ell}$:
$$\xymatrix@R=2mm{
\raisebox{-13pt}{\includegraphics[height =0.5in]{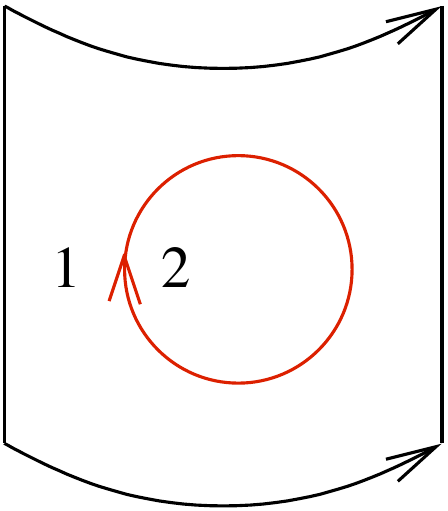}}= i
\,\raisebox{-13pt}{\includegraphics[height=0.5in]{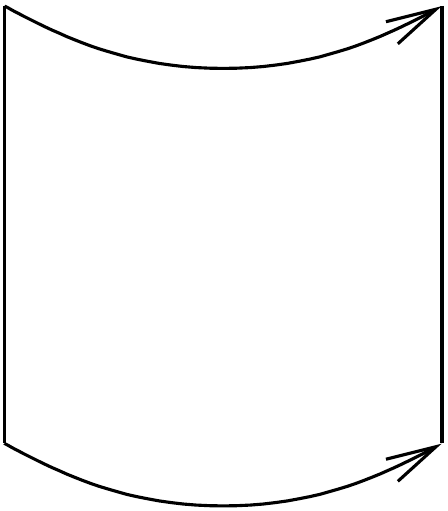}} &
\raisebox{-13pt}{\includegraphics[height =0.5in]{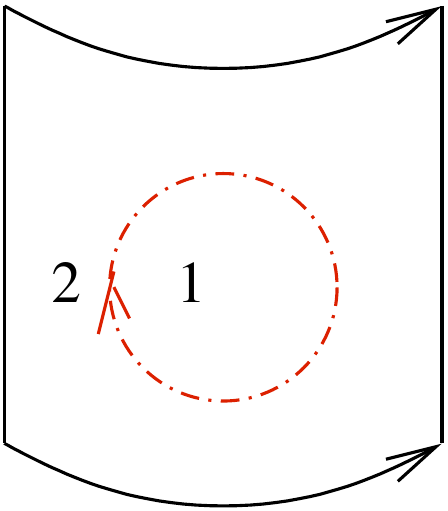}}= -i
\,\raisebox{-13pt}{\includegraphics[height=0.5in]{handy-rel3.pdf}} \\
\raisebox{-13pt}{\includegraphics[height =0.5in]{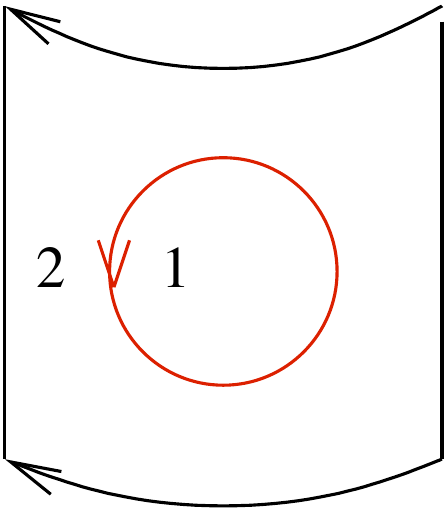}}=-i
\,\raisebox{-13pt}{\includegraphics[ height=0.5in]{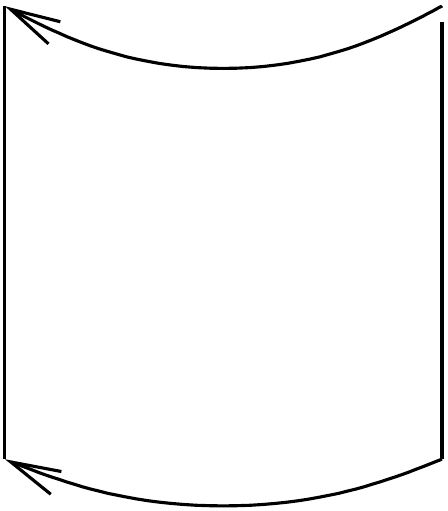}} &
\raisebox{-13pt}{\includegraphics[height =0.5in]{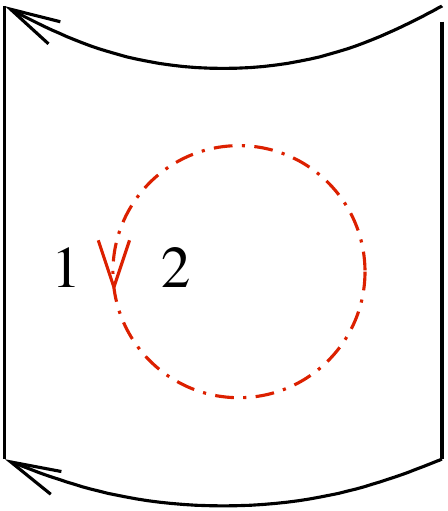}}=i
\,\raisebox{-13pt}{\includegraphics[ height=0.5in]{handy-rel6.pdf}} & \text{(SR)} }$$
\end{lemma}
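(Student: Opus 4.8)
The plan is to reduce each of the four (SR) identities to equalities between evaluations of closed foams, and then to read the coefficient $\pm i$ off the (UFO) relation. By Definition~\ref{def:quotient category}, a relation $\sum_i c_i S_i = 0$ among foams $S_i\in\Hom_{Foams_{/\ell}}(\Gamma,\Gamma')$ holds if and only if $\sum_i c_i\,\mathcal{F}(V'S_iV)=0$ for every $V\in\Hom_{Foams_{/\ell}}(\emptyset,\Gamma)$ and every $V'\in\Hom_{Foams_{/\ell}}(\Gamma',\emptyset)$. Hence it suffices to cap the two foams appearing in each (SR) relation with an arbitrary $V$ below and an arbitrary $V'$ above, and to check that the two closed foams so obtained have the same image under the evaluation $\mathcal{F}$. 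This is the step that makes the argument legitimate in $\textit{Foams}_{/\ell}$ rather than merely in $\textit{Foams}_{/\ell}(\emptyset)$, since $\mathcal{F}$ is defined only on closed foams.

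Next I would exploit locality. The two foams on the two sides of a given (SR) equation agree outside a small ball $\mathcal{B}$; inside $\mathcal{B}$ the left-hand side carries one singular arc with its two incident facets, while the right-hand side is the simpler local picture (the same one for the first two relations, a different one for the last two). Since $\mathcal{F}$ is multiplicative with respect to disjoint unions and is uniquely determined by the relations $\ell$ (the remark in Section~\ref{sec:relations l}), it is enough, after capping off, to identify the contribution of the closed sub-foam lying in $\mathcal{B}$. A boundary-fixed isotopy brings the singular arc into standard position, and then a neck-cut via the surgery relation (SF) isolates a closed sub-foam which, after using the singular-cup and singular-cap maps of Section~\ref{sec:TQFT} together with (UFO) (and (2D) to absorb a stray dot if one appears), evaluates to $\pm i$; what remains is exactly the right-hand side picture together with $V$ and $V'$. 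This yields the four identities up to the sign of $i$.

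Pinning down that sign in each case is, I expect, the genuine obstacle. The choice between $+i$ and $-i$ is governed entirely by the ordering of the two facets along the singular arc --- whether the preferred facet lies to the left or to the right of the oriented singular arc, i.e.\ whether the red curve is drawn solid or dashed --- measured against the fixed ordering of the ufo-foam's facets in Figure~\ref{fig:ordering of facets of ufo}. Because reversing the ordering of the facets at a singular circle multiplies $\mathcal{F}$ by $-1$, the four sign patterns $i,\,-i,\,-i,\,i$ in the statement correspond exactly to the four combinations of facet orderings on the four left-hand sides of (SR). One must therefore keep careful track of the orientation induced on the singular arc --- and on the singular circle produced during the isotopy --- so as to be certain that no spurious sign enters; the algebraic input, by contrast, is the routine use of (UFO) and (2D). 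This is precisely the verification carried out in~\cite[Section 4]{CC}, and it applies here verbatim.
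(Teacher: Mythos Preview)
Your proposal is correct and coincides with the paper's own treatment: the paper gives no argument here beyond the sentence ``The next two lemmas and corollaries are proved exactly the same as in~\cite{CC}'', and your outline---close up via Definition~\ref{def:quotient category}, isolate the local piece with (SF), and read the sign $\pm i$ off (UFO) together with the facet-ordering rule $\mathcal{F}(S')=-\mathcal{F}(S)$---is exactly the computation carried out in~\cite[Section 4]{CC}, which you cite explicitly at the end. There is nothing to add.
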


\begin{lemma}\label{nice relations}
The following relations hold in $\textit{Foams}_{/\ell}$:
$$\xymatrix@R=2mm{
 \raisebox{-22pt}{\includegraphics[height=0.7in]{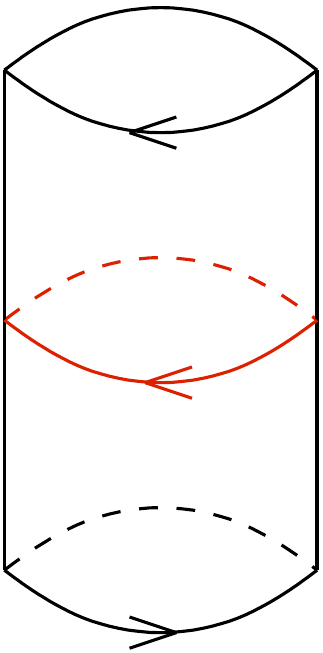}}= i
\raisebox{-22pt}{\includegraphics[height=0.7in]{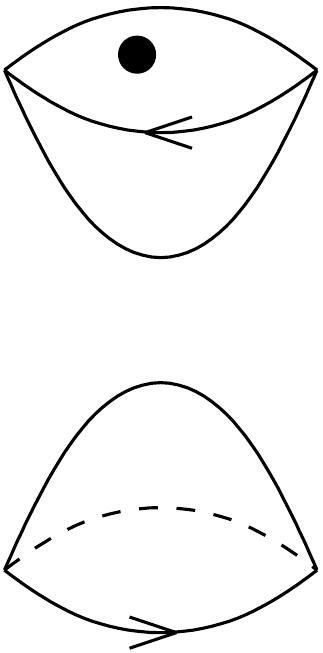}}-i
\raisebox{-22pt}{\includegraphics[height=0.7in]{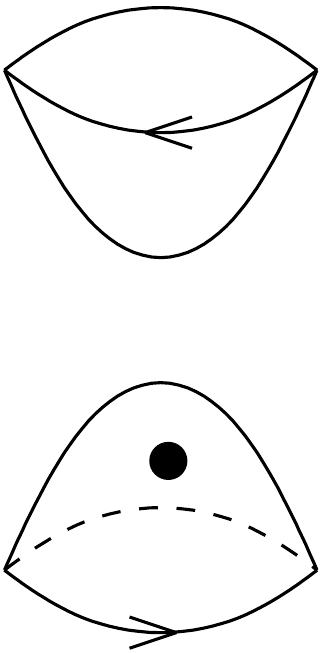}} 
& \text{(RSC)}\\
\raisebox{-22pt}{\includegraphics[height=0.7in]{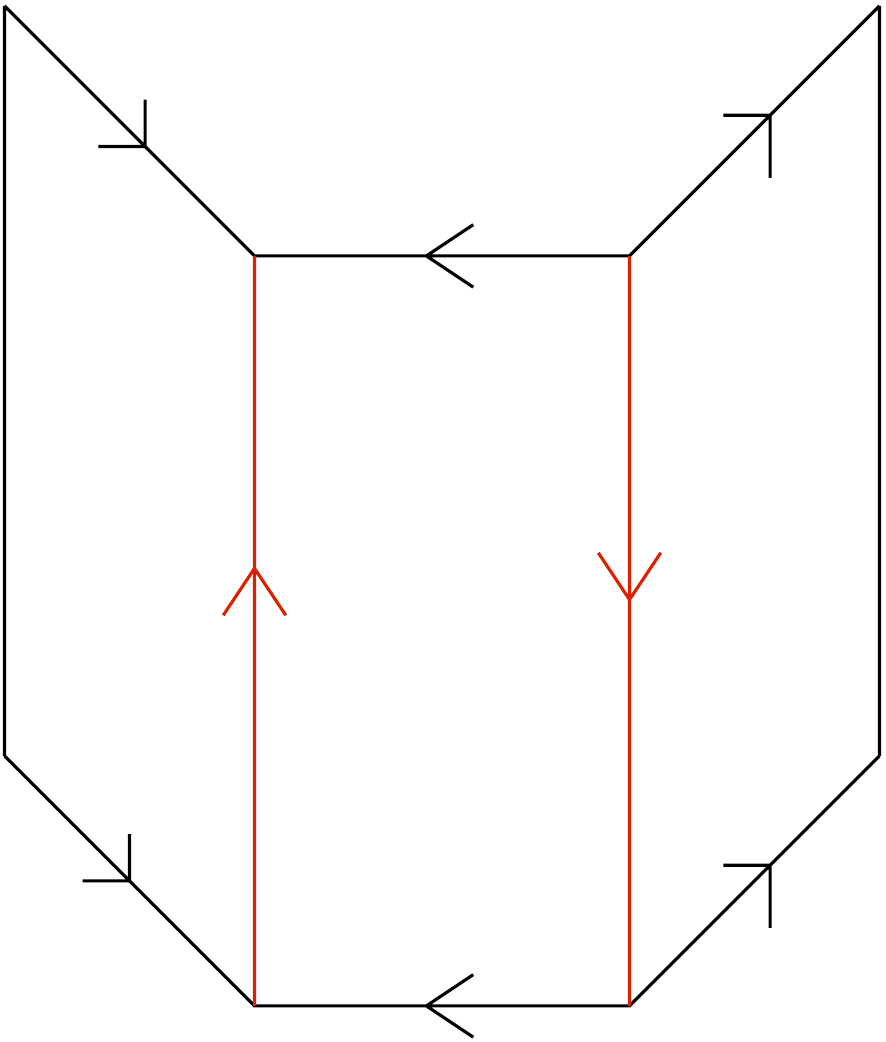}}= -i \,
\raisebox{-22pt}{\includegraphics[height=0.7in]{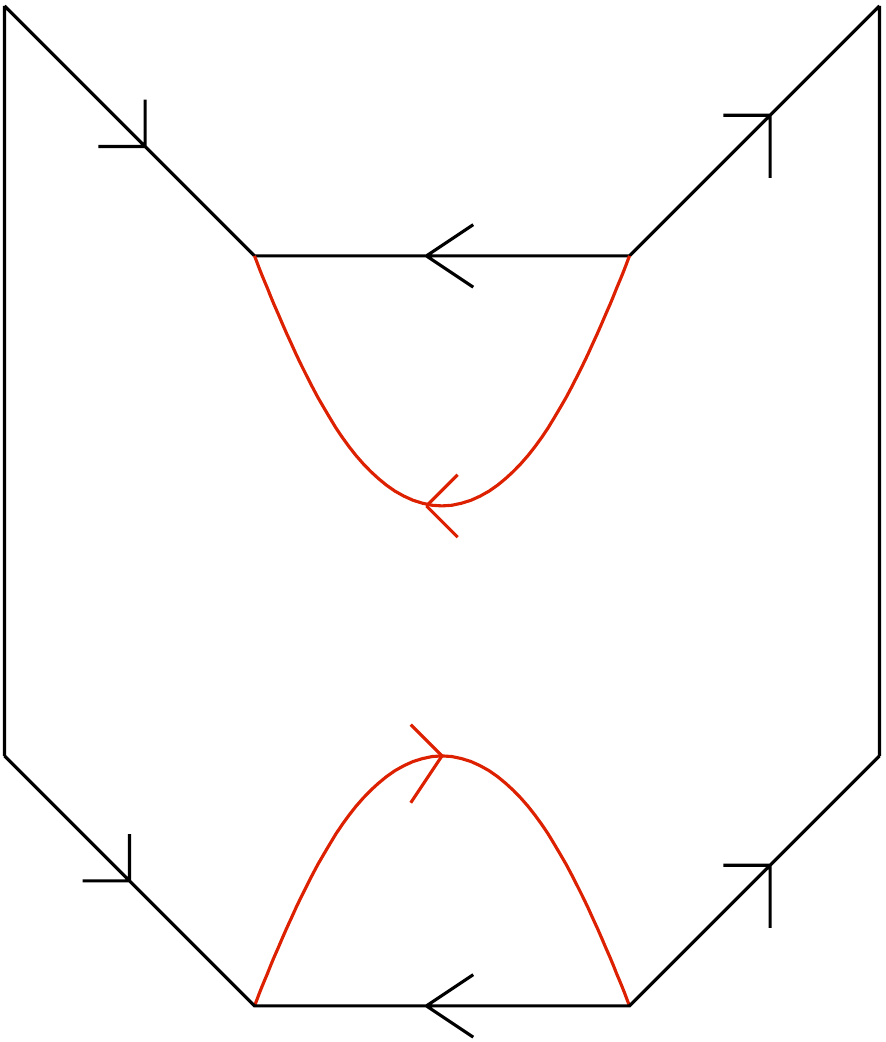}} \quad \text{and}\quad
\raisebox{-22pt}{\includegraphics[height=0.7in]{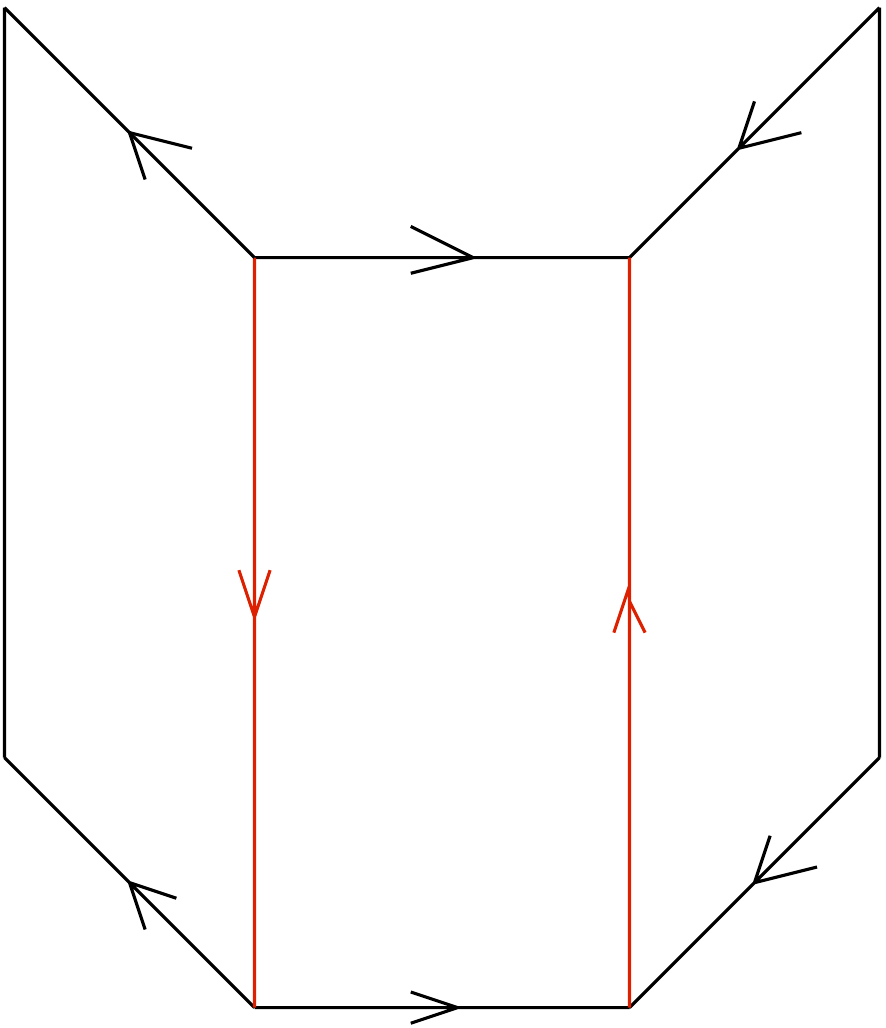}}= i\,
\raisebox{-22pt}{\includegraphics[height=0.7in]{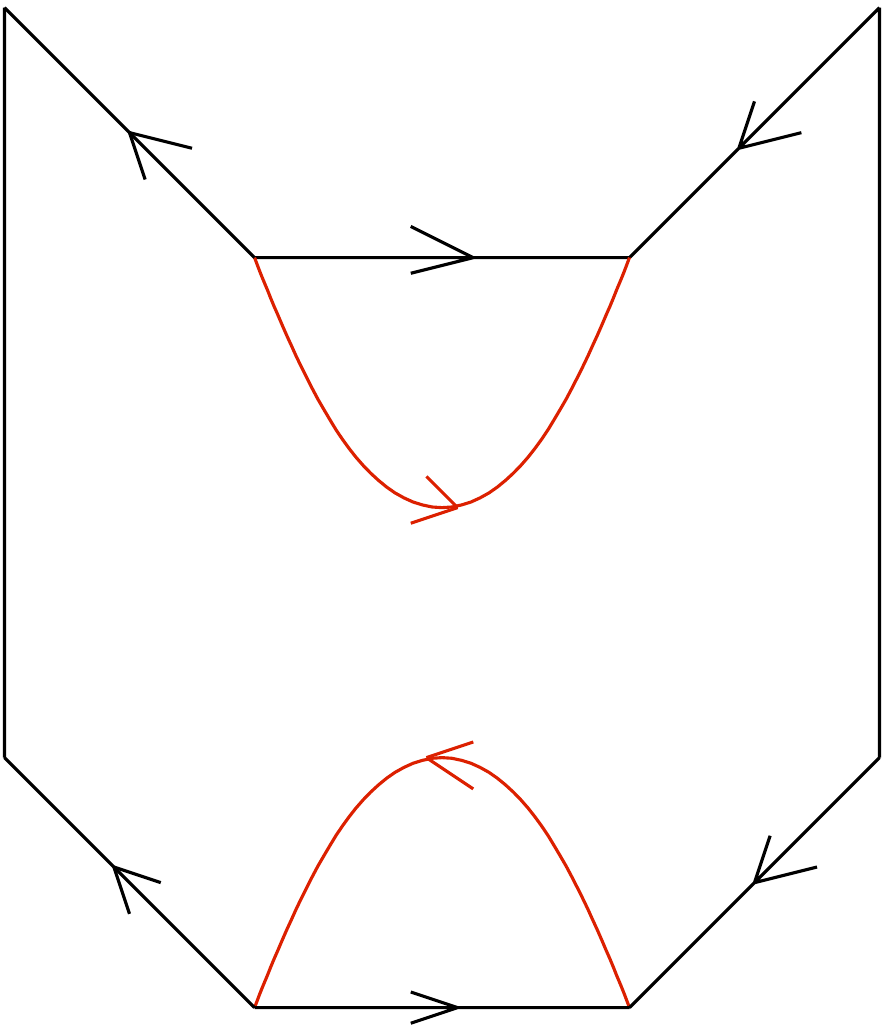}}
&\text{(CI)} \\
\raisebox{-22pt}{\includegraphics[height=0.7in]{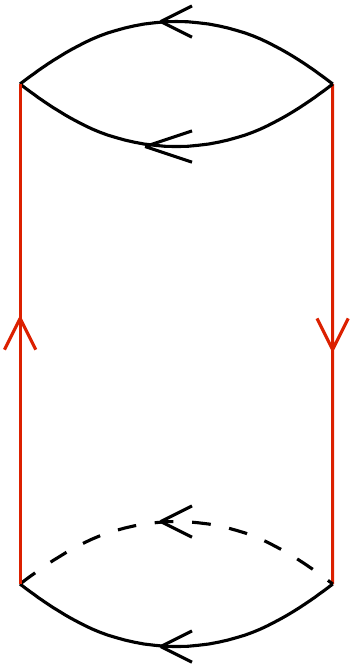}}= -i
\raisebox{-22pt}{\includegraphics[height=0.7in]{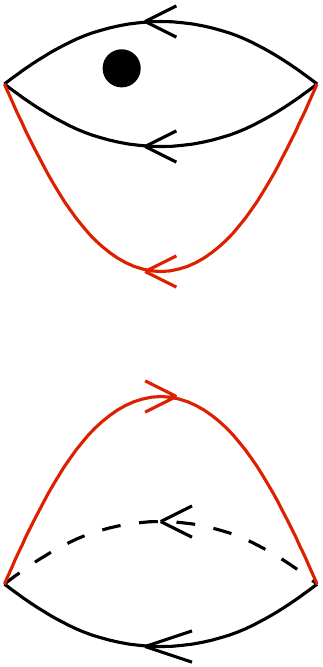}} -i
\raisebox{-22pt}{\includegraphics[height=0.7in]{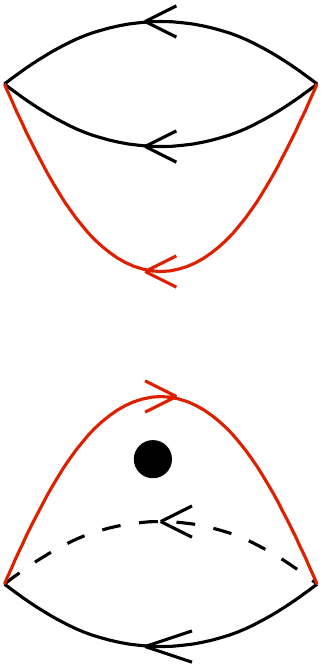}} + hi
\raisebox{-22pt}{\includegraphics[height=0.7in]{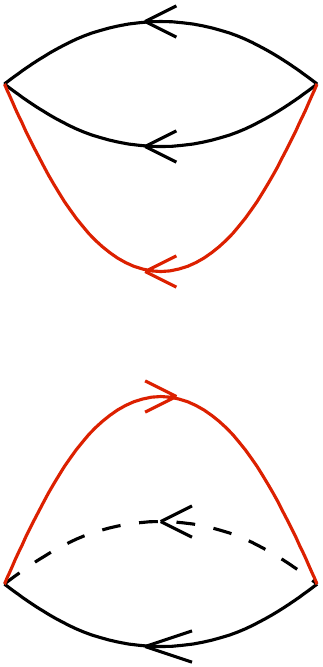}}
& \text{(CN)}
}$$
where the dots in (CN) are on the preferred facets (those in the back).
\end{lemma}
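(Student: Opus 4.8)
The plan is to deduce all three families of identities from the local relations $\ell = $ (2D, SF, S, UFO) and their already-recorded consequences: the genus-reduction formula, the dot-exchange relations (ED), the explicit maps attached to singular cylinders and singular cups/caps in Section~\ref{sec:TQFT}, and the sheet relations (SR) of Lemma~\ref{handy relations}. The bridge between ``relations among foams with boundary'' and ``relations among elements of the ground ring'' is Definition~\ref{def:quotient category}: to prove $\sum_i c_i S_i = 0$ for $S_i \in \Hom_{Foams_{/\ell}}(\Gamma,\Gamma')$ it suffices to check that $\sum_i c_i\,\mathcal{F}(V' S_i V) = 0$ for every pair of capping foams $V \in \Hom_{Foams_{/\ell}}(\emptyset,\Gamma)$ and $V' \in \Hom_{Foams_{/\ell}}(\Gamma',\emptyset)$. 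In each of (RSC), (CI), (CN) the competing foams agree outside a small ball, so by functoriality of $\mathsf{F}$ (equivalently, by multiplicativity of $\mathcal{F}$ under gluing) the verification localizes: it is enough to compare the $\mathbb{Z}[i][a,h]$-module maps that $\mathsf{F}$ assigns to the local pieces, and such maps agree as soon as they agree on the basis $\{1, X\}$ of $\mathcal{A}$.

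For (RSC) I would cut the foam transversally to the singular circle. Neck-cutting inside the Frobenius algebra $\mathcal{A}$ is governed by the pair of $\epsilon$-dual bases $\{1, X\}$ and $\{X - h,\, 1\}$ — one checks $\epsilon(X-h) = 1$, $\epsilon(X(X-h)) = \epsilon(a) = 0$, $\epsilon(1) = 0$, $\epsilon(X) = 1$ — which is precisely the content of the surgery relation (SF), since $\Delta(1) = 1\otimes X + X\otimes 1 - h\,1\otimes 1$. Performing the analogous cut along a singular circle replaces the singular tube by a sum of a (singular-cap)-capped piece and a (singular-cup)-capped piece; substituting the explicit values $1 \mapsto \mp i$, $X \mapsto \mp i(h-X)$ for the singular cylinder and $1 \mapsto 0$, $X \mapsto \pm i$ for the singular cap produces exactly the coefficients $\pm i$ displayed on the right-hand side. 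It then remains to evaluate both sides on $1$ and on $X$ and observe that they match.

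For (CI) the foam on the left is an identity-type foam on a singular-resolution web, and I would identify it with one of the composites of singular cups, caps and cylinders whose $\mathsf{F}$-image was computed in Section~\ref{sec:TQFT}; the scalar $-i$ (resp.\ $i$) is the value of that composite in the chosen ordering of facets, the alternative ordering contributing the opposite sign in accordance with the rule $\mathcal{F}(S') = -\mathcal{F}(S)$ for a reversal of facet ordering at a singular circle. For (CN), a neck-cutting across a singular arc with dots on the preferred (back) facets, I would apply the surgery/genus-reduction formula across the singular wall, then use the sheet relations (SR) to trade a dot on a singular facet for a factor $\pm i$ and the dot-exchange relations (ED) to push the surviving dots onto the preferred facets; collecting the scalars yields the coefficients $-i$, $-i$, $hi$, the last term being exactly the ``$-h$'' of (SF)/(ED) multiplied by the $i$ coming from the singular piece.

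The step I expect to be the real obstacle is not the algebra in $\mathcal{A} = \mathbb{Z}[i][a,h,X]/(X^2 - hX - a)$, which is routine, but the sign bookkeeping: each singular cup, cap or cylinder contributes a factor $\pm i$ and each reversal of a facet ordering at a singular arc or circle flips a sign, so the relations hold only for the precise conventions fixed in Section~\ref{sec:foams} — the ordering of the ufo's facets chosen on the left of Figure~\ref{fig:ordering of facets of ufo}, the continuous-versus-dashed red-curve convention for singular arcs, and the ``preferred edge on the right'' convention for a singular resolution. Carrying these consistently through every decomposition — and in particular checking that the stated hypothesis in (CN) (dots on the preferred facets, in the back) is the one producing $+hi$ rather than $-hi$ — is where the argument must be done with care; everything else follows the corresponding proofs in~\cite{CC} essentially line by line.
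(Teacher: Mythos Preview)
Your proposal is correct and matches the paper's approach, which in fact gives no proof at all for this lemma: the paper simply states that it is ``proved exactly the same as in~\cite{CC}.'' Your outline --- reduce via Definition~\ref{def:quotient category} to closed-foam evaluations, then localize and compute using the explicit $\mathsf{F}$-values of singular cups/caps/cylinders together with (SF), (SR), and (ED), with the bulk of the work being sign/facet-ordering bookkeeping --- is exactly the strategy carried out in~\cite{CC}, and your closing remark already acknowledges this.
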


 \begin{corollary}\label{removing singular points in pairs}
 The isomorphisms given in Figure~\ref{fig:removing vertices in pairs1} hold in the category $\textit{Foam}_{/\ell}$.
 \begin{figure}[ht]
$$\xymatrix@R=2mm{
\raisebox{-25pt}{\includegraphics[height=.8in]{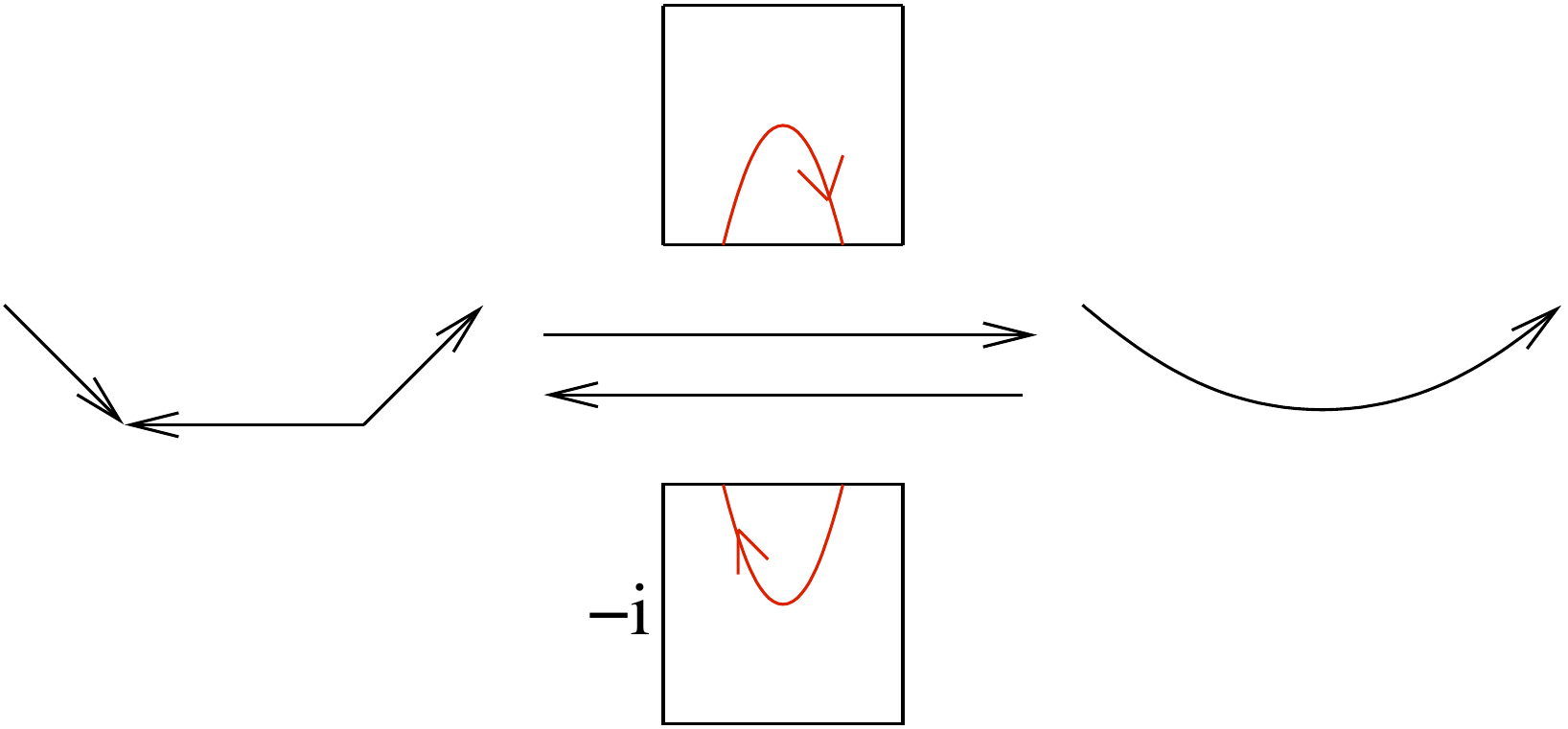}}
 \quad \text{and} \quad
\raisebox{-25pt}{\includegraphics[height=.8in]{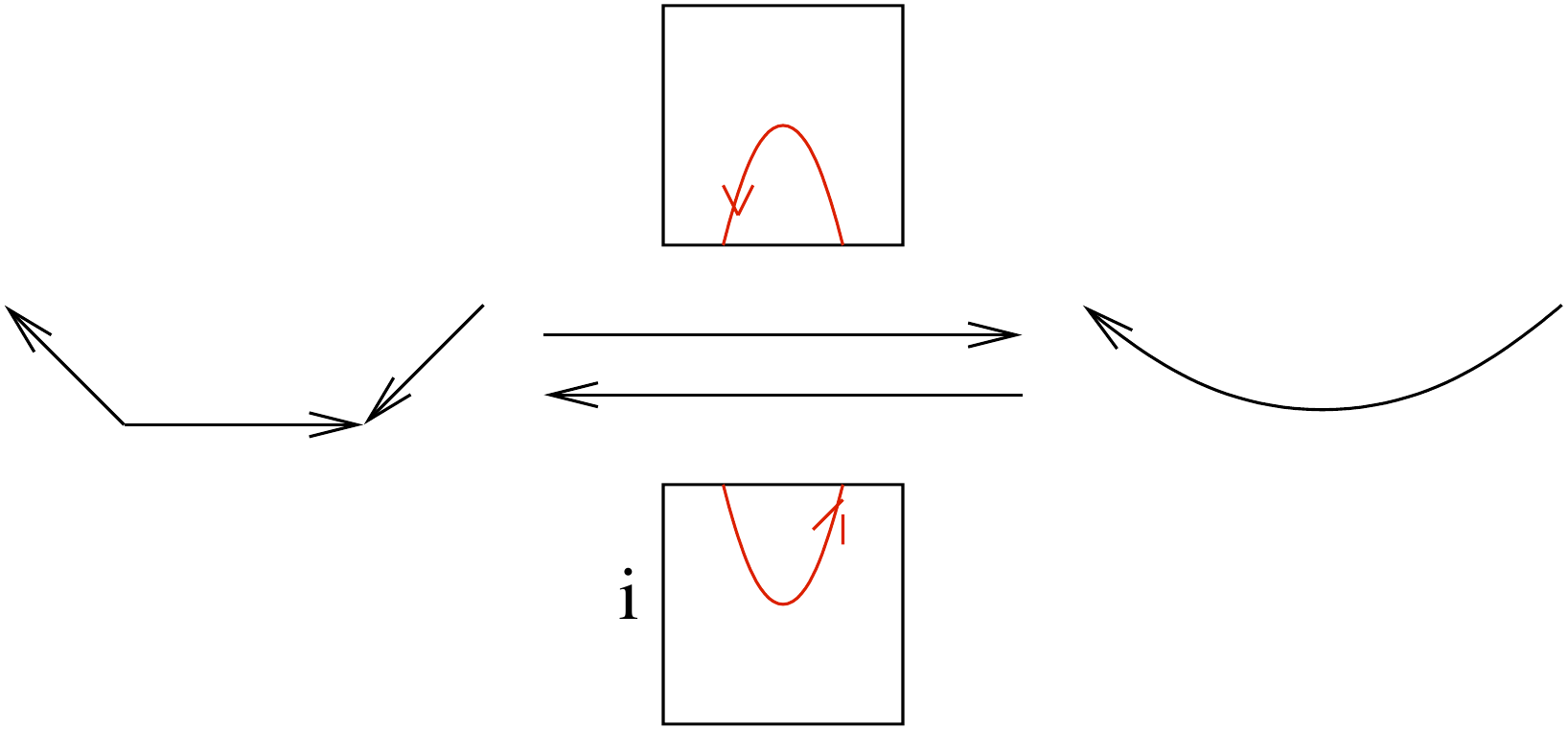}}
}$$
\caption{Removing/creating pairs of vertices of the same type}
\label{fig:removing vertices in pairs1}
\end{figure}  
\end{corollary}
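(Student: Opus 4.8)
The plan is to prove Corollary~\ref{removing singular points in pairs} by showing that the two foams appearing in each isomorphism of Figure~\ref{fig:removing vertices in pairs1} are mutually inverse in $\textit{Foams}_{/\ell}$; that is, composing them in either order yields the identity foam on the relevant web. Recall from Definition~\ref{def:quotient category} that equality of formal linear combinations of foams reduces to checking that the evaluations $\mathcal{F}(V' S V)$ agree for all closing foams $V, V'$, so it suffices to verify the two composites are identities after ``capping off'', and for this we may freely use the local relations $\ell$ together with the derived relations (SR), (RSC), (CI), and (CN) from Lemmas~\ref{handy relations} and \ref{nice relations}.

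\textbf{Key steps.} First I would name the two foams in, say, the left-hand isomorphism: one foam $\Phi$ takes the web with two bivalent vertices of the same type to the web obtained by the web skein relation (the second relation in Figure~\ref{fig:web skein relations}) — roughly, a ``zip'' that merges the bubble — and the other foam $\Psi$ is the corresponding ``unzip''. Second, I would compute $\Psi \circ \Phi$: this produces a foam from the two-vertex web to itself containing a closed singular circle (the zip followed by unzip creates a tube with a singular circle around it). Using relation (RSC), which trades a singular circle for a linear combination of dotted ordinary sheets with coefficients $\pm i$, together with the sheet relations (SR) to push and absorb the resulting dots, I expect this composite to reduce to $i \cdot (-i) = 1$ times the identity foam (the two factors of $\pm i$ coming from the two singular-arc orderings involved), i.e.\ exactly the identity. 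Third, I would compute $\Phi \circ \Psi$, a foam from the single-arc web to itself; here the relevant tool is the curtain identity (CI), which says a ``curtain'' composite equals $\pm i$ times a pair of ordinary saddle-like sheets, and again the neck-cutting style relation (CN) or (SF) lets me resolve any residual closed component, with the bookkeeping of the $\pm i$ factors again yielding $1$. Fourth, the right-hand isomorphism in Figure~\ref{fig:removing vertices in pairs1} is handled by the mirror-image argument, with the roles of the continuous and dashed singular arcs (hence the signs of the $i$'s) interchanged — the relations (SR), (RSC), (CI), (CN) all come in the two flavors exactly designed for this.

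\textbf{Main obstacle.} The genuine content is not conceptual but in the careful tracking of the fourth-roots-of-unity coefficients: each application of (RSC), (CI), or the singular-cylinder maps from Section~\ref{sec:TQFT} introduces factors of $\pm i$, and the whole point of the construction (functoriality without sign ambiguity) is that these multiply to $+1$ rather than $-1$. So the hard part will be verifying that the orderings of facets at the two singular arcs involved are the ones for which the product of the $\pm i$'s is $+1$, and that the dots generated by (SR)/(CN) either cancel in pairs or are killed by the sphere relation (S) after closing off. A secondary check is that the isotopy class of the composite foam is what I claim before applying any relation — this is routine since foams are taken up to boundary-fixing isotopy, but one must make sure the singular circle produced by zip-then-unzip is unknotted and bounds a disk on the preferred side so that (RSC) applies directly. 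Once the coefficient bookkeeping is pinned down, both composites are the identity and the two foams are isomorphisms, establishing the corollary; since this mirrors the argument in~\cite{CC} verbatim except for the presence of the parameter $h$ (which only affects the (2D), (SF), (CN), (ED) relations, never the $\pm i$ coefficients), no new difficulty arises beyond carrying $h$ along.
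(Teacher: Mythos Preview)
Your proposal is correct and matches the paper's approach: the paper gives no explicit proof here, simply noting that the corollary is proved exactly as in~\cite{CC}, and the argument there is precisely the direct verification you outline---show the two foams are mutual inverses by computing both composites and simplifying via the relations (CI), (RSC), and (SR) from Lemmas~\ref{handy relations} and~\ref{nice relations}, with the $\pm i$ factors cancelling to $+1$. Your identification of the coefficient bookkeeping as the only real content, and your observation that the parameter $h$ plays no role in these particular relations, are both on target.
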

\begin{corollary}\label{cor:Isomorphisms 1 and 2}
The following isomorphisms hold in the category $\textit{Foam}_{/\ell}.$
$$\raisebox{-25pt}{\includegraphics[height=.7in]{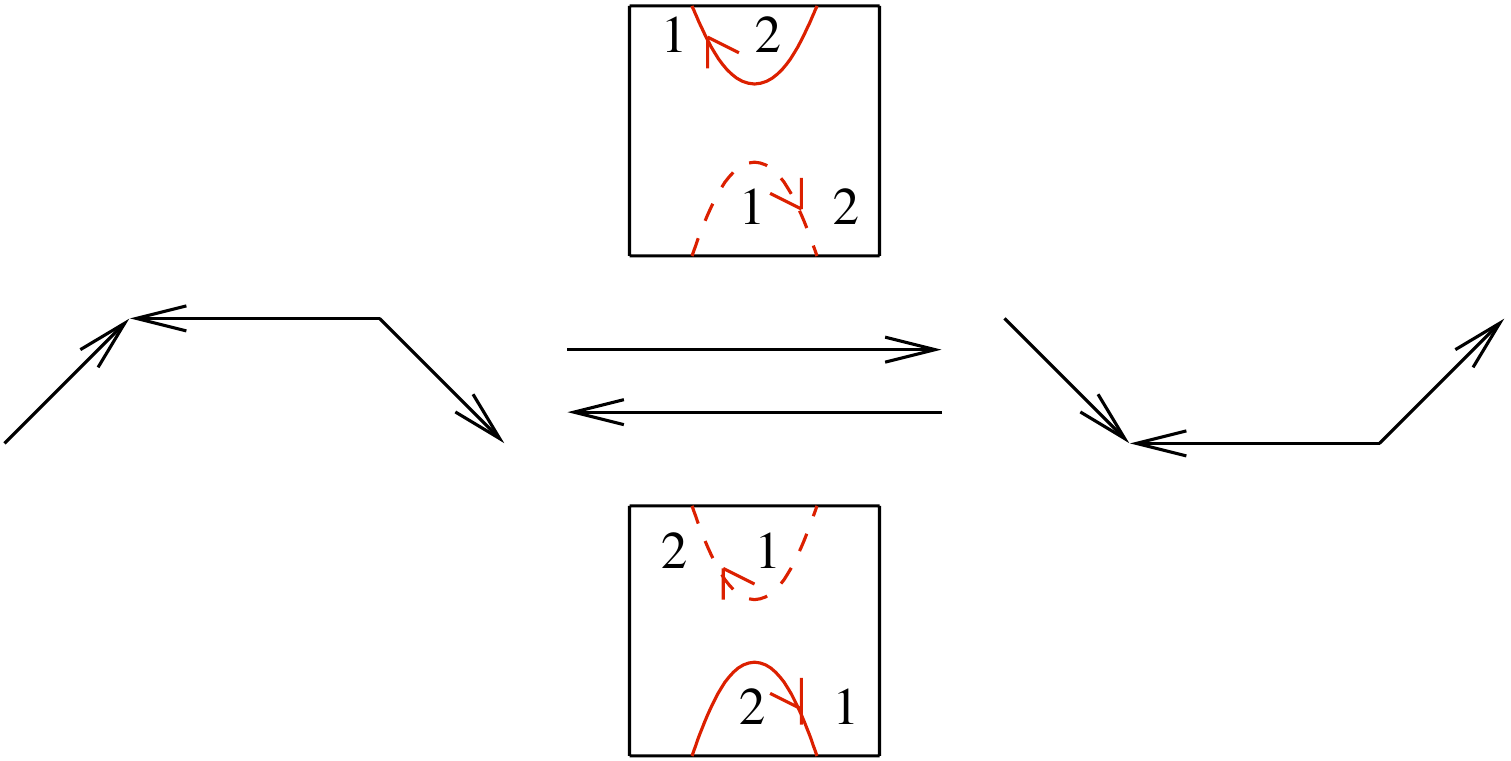}} \quad \mbox{and} \quad
\raisebox{-25pt}{\includegraphics[height=.7in]{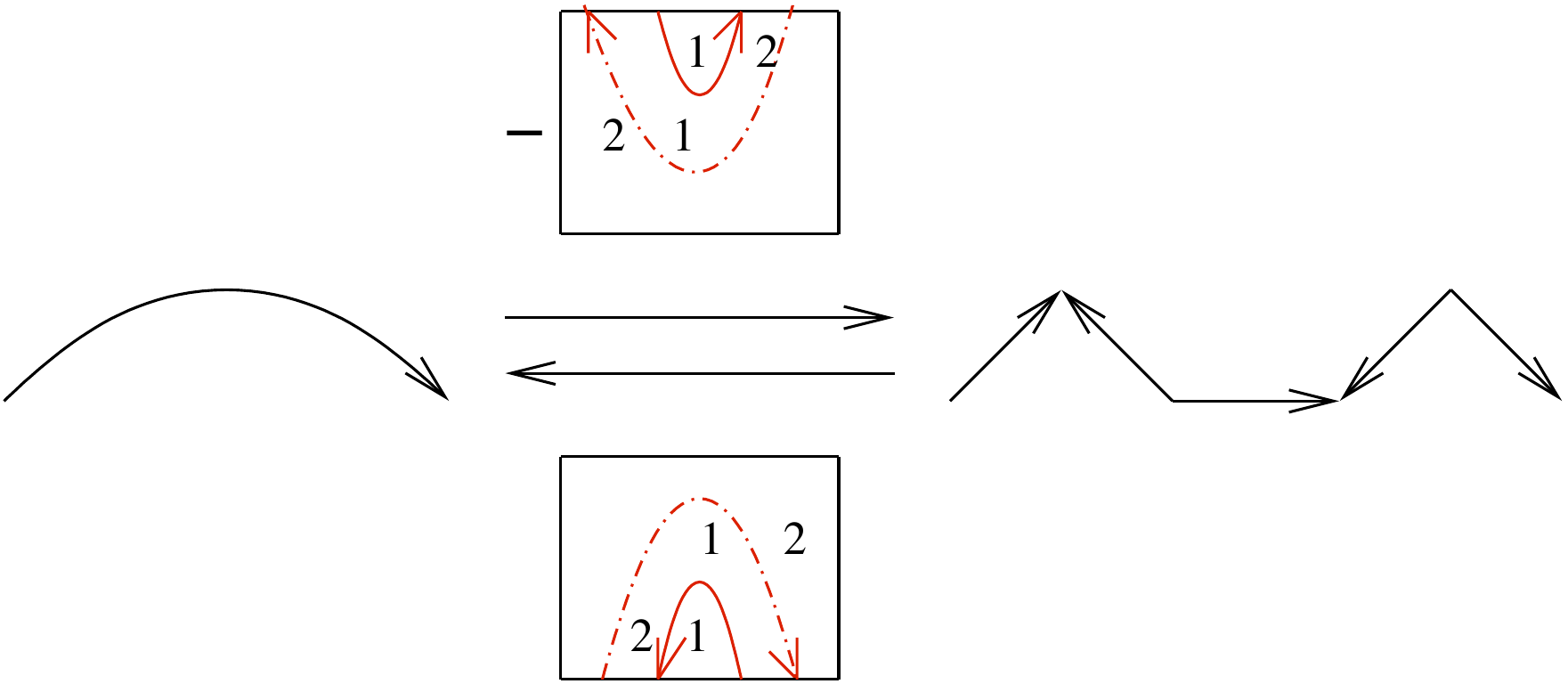}} $$
\end{corollary}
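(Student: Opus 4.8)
The plan is to argue exactly as in \cite[Section 4]{CC}: for each of the two claimed isomorphisms I would exhibit an explicit pair of mutually inverse foams, assembled from the elementary pieces (births, deaths, saddles, and the singular ``curtain'' foams), each decorated with at most one dot and carrying the grading shift recorded in the picture, and then verify that the two composites are the relevant identity morphisms of $\textit{Foams}_{/\ell}$. In the event that one of \textit{Isom1}, \textit{Isom2} is a direct-sum decomposition rather than a bare isomorphism of webs, the maps come as a tuple $f_\pm,g_\pm$ of the appropriate degrees, and one checks a matrix of composites instead.

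Concretely, I would first write down the candidate foams $f\co\Gamma\to\Gamma'$ and $g\co\Gamma'\to\Gamma$, then isotope the composites $gf$ and $fg$ into a normal form and simplify them using the local relations $\ell=(\text{2D, SF, S, UFO})$ together with the derived relations already available: the sheet relations (SR) of Lemma~\ref{handy relations}, the relations (RSC), (CI), (CN) of Lemma~\ref{nice relations}, the dot-exchange identities (ED), and the pair-removal isomorphisms of Corollary~\ref{removing singular points in pairs}. The model computation is neck-cutting: a composite of ``create-then-cap'' type produces a tube, which (SF) replaces by the sum of two once-dotted pieces minus $h$ times an undotted one, and matching this against the components of $f$ and $g$ (and the shift conventions) reproduces $\id_\Gamma$. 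When a direct-sum decomposition is involved, one must additionally show the two ``off-diagonal'' composites vanish; this follows either on degree grounds---the closed component that appears has negative degree, hence is killed by the grading---or directly from the sphere relation (S) and the (UFO) relation.

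The main obstacle is bookkeeping rather than anything conceptual: one must keep track of the ordering of facets along every singular arc and singular circle created when the foams are composed---equivalently, of the powers of $i$ and the signs they contribute---since $\mathcal{F}$ changes sign when the facet ordering at a singular circle is reversed, and one has to invoke (RSC), (CI), (CN) with exactly the right curtain orientations. Since the argument is formally identical to the one in \cite{CC}, with the only change being the appearance of the parameter $h$ in (2D), in (SF), and in the maps assigned to the singular cylinders, no genuinely new difficulty arises; one simply re-runs the \cite{CC} verification while carrying the $h$-terms along.
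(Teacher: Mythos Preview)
Your proposal is correct and matches the paper's approach: the paper does not give a separate proof of this corollary but simply notes that it, together with the two preceding lemmas and Corollary~\ref{removing singular points in pairs}, ``are proved exactly the same as in~\cite{CC}.'' Your sketch---write down explicit mutually inverse foams built from the basic pieces, then reduce the composites to identities using the local relations $\ell$ and their derived consequences (SR), (RSC), (CI), (CN), (ED), carrying the new $h$-terms along---is precisely that argument.
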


\section{\textbf{The geometric invariant of a tangle}}\label{sec:complex}

To categorify the quantum $\mf{sl}(2)$-link invariant, we replace the relations in Figure~\ref{fig:decomposition of crossings} by formal chains as in Figure~\ref{fig:mapcones} (where $\{r\}$ stands for the grading shift operator).

\begin{figure}[ht!]\begin{center}
\includegraphics[height=2in]{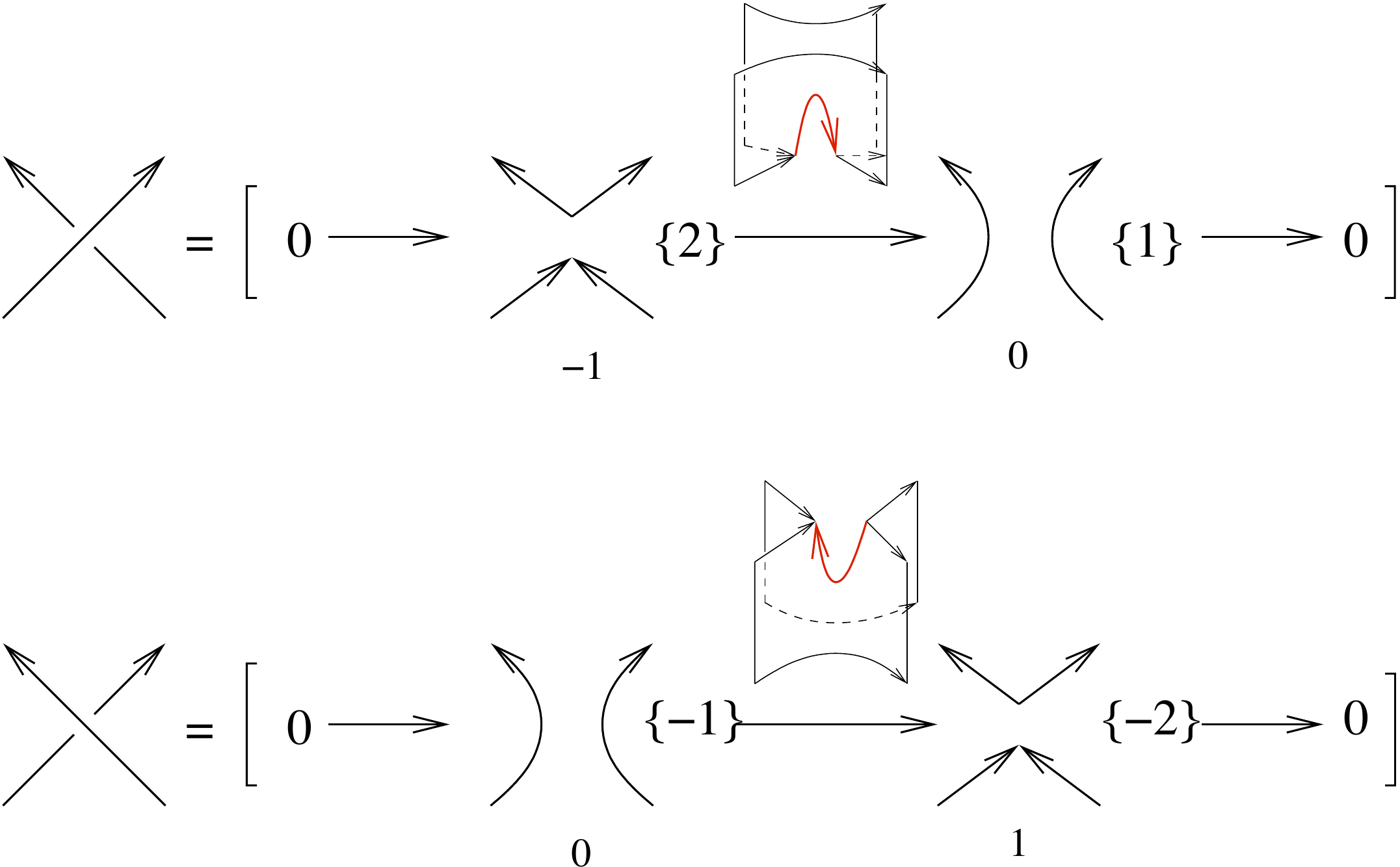} 
\caption{The oriented resolutions are at cohomological degrees 0} \label{fig:mapcones}\end{center}
\end{figure}

Given a tangle diagram $T,$ we associate to it a formal chain complex $[T]$ which is obtained by taking the (formal) tensor product of the chains associated to all crossings in $T.$ The chain objects are formal direct sums of webs---resolutions of $T$---and differentials are matrices of foams. We assume that the reader is familiar with such construction (see~\cite{BN1} and~\cite{CC} for more details).

\subsection{Invariance under the Reidemeister moves}\label{sec:invariance}

Let $\textit{Kof} = $Kom(Mat($\textit{Foams}_{/\ell}))$ be the category of complexes over $\textit{Foams}_{/\ell}$ and $\textit{Kof}_{/h} = $Kom$_{/h}$(Mat($\textit{Foams}_{/\ell}))$ the homotopy subcategory of the earlier. We remark that both categories are graded by degree.

 \begin{theorem}(Invariance Theorem)\label{invariance} The complex $[T]$ is invariant under the Reidemeister moves up to homotopy. In other words, it is an invariant in $\textit{Kof}_{/h}.$
 \end{theorem}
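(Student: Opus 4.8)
The strategy is the standard Bar-Natan style ``local'' argument, adapted to the web-and-foam setting exactly as in~\cite{me,CC}. Recall that $[T]$ is built as a tensor product over the crossings of the mapping cones in Figure~\ref{fig:mapcones}, so by the locality of this construction it suffices to exhibit, for each Reidemeister move $R1$, $R2$, $R3$ (in each variant of orientation and of over/under strand), a homotopy equivalence in $\textit{Kof}_{/h}$ between the complexes assigned to the two sides of the move, and then invoke the fact that tensoring a homotopy equivalence with the identity complex on the unchanged part of $T$ yields a homotopy equivalence of the full complexes. Thus the whole theorem reduces to a finite list of purely local computations, each taking place in $\textit{Mat}(\textit{Foams}_{/\ell})$ restricted to the small tangle of the move.

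First I would handle $R1$. Resolving the single crossing of the kinked strand gives a two-term complex; one of the two resolutions contains a closed web piece that, after applying the isomorphisms of Corollary~\ref{removing singular points in pairs} and the circle-removal skein relation, becomes $\Gamma \sqcup \raisebox{-2pt}{\includegraphics[height=0.13in]{circle2sv.pdf}}$ — i.e.\ the web with an extra circle, which the TQFT sees as $\mathcal{A}\otimes(-)$. Using Gaussian elimination (the ``cancellation lemma'' for complexes over an additive category) together with the neck-cutting/genus-reduction relations from Section~\ref{sec:relations l} to split off the superfluous summand, the complex collapses to the one-term complex of the straightened strand, up to the expected grading shift; the sheet relations (SR) and relation (S) are what make the cancelling differential an isomorphism on the nose. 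For $R2$ I would resolve both crossings, obtaining a four-term complex; two of the four resolutions are related by the isotopy moves of Corollary~\ref{cor:Isomorphisms 1 and 2} and the web skein relations of Figure~\ref{fig:web skein relations}, and a Gaussian-elimination argument (again using (SR), (CI), and the isomorphisms of Corollary~\ref{removing singular points in pairs}) cancels an acyclic subcomplex, leaving exactly the one-term complex of the two parallel strands. The delooping/cancellation here is where the factors of $i$ introduced in the foam relations must be tracked carefully so that the composite of the cancelling maps is genuinely $\pm\,\id$.

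For $R3$ the cleanest route is Bar-Natan's: rather than matching the two $6$-term cube complexes directly, I would show that each side is homotopy equivalent to a common complex by first applying the already-established $R2$ equivalence to a pair of strands inside the $R3$ configuration (the ``pull-through'' trick), reducing $R3$ to a statement about isotopic tangles that is then settled by the relations (CN), (RSC), and the isomorphisms of Corollaries~\ref{removing singular points in pairs}--\ref{cor:Isomorphisms 1 and 2}. At each stage the homotopies are explicit foams, and one checks that $d h + h d$ equals the relevant difference of identity maps using only the local relations $\ell$ and the (ED), (SF), (2D) identities; since all these relations are degree-preserving, every map produced has the correct degree, so the equivalences live in the graded homotopy category as claimed.

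\textbf{Main obstacle.} The conceptual structure is routine, so the real work — and the only place errors creep in — is the bookkeeping of signs and of the powers of $i$ coming from the curtain/singular-circle relations (SR), (CI), (RSC), (CN): one must verify that in each Gaussian elimination the pair of cancelling morphisms compose to precisely $\pm\id$ (not $\pm i\cdot\id$ or the like), and that the surviving differential after cancellation is exactly the differential of the target complex rather than merely homotopic to it. This is exactly the subtlety that the choice of $\mathbb{Z}[i]$ coefficients and the fixed orderings of facets were designed to control, and checking it is what makes the proof more than a verbatim copy of~\cite{CC}; but as the paper notes, each such verification proceeds ``exactly the same as in~\cite{CC}'' once the $h$-dependent terms in (SF), (2D), (ED), (CN) are carried along, so I would present the $R1$ and one $R2$ case in detail and indicate that the remaining cases are analogous.
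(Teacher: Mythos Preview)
Your plan is correct and matches the paper's proof in structure: a local argument handling each Reidemeister move separately, with $R3$ reduced via the mapping-cone/$R2$-retract trick \`a la Bar-Natan. The only cosmetic difference is that the paper does not phrase $R1$ and $R2$ as ``deloop, then Gaussian-eliminate'': it simply writes down explicit chain maps $f^0,g^0$ and a homotopy $\tilde h$ for each move and checks the homotopy-equivalence identities directly against the relations (S), (SF), (ED), (CI), (SR), (CN), (UFO); for $R3$ it writes both sides as mapping cones, composes with the $R2$ strong deformation retract $g$ on the top layer, and then exhibits an explicit isomorphism $\alpha$ between the source complexes to conclude $\mathbf{M}(\Phi_L)\cong\mathbf{M}(\Phi_R)$. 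Your Gaussian-elimination formulation would produce exactly these maps, so the content is the same.
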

 \begin{proof}
\textit{Reidemeister\, 1a.}\quad
Consider diagrams $D_1$ and $D'$ that differ only in a circular region as in the figure below.
$$D_1=\raisebox{-13pt}{\includegraphics[height=0.4in]{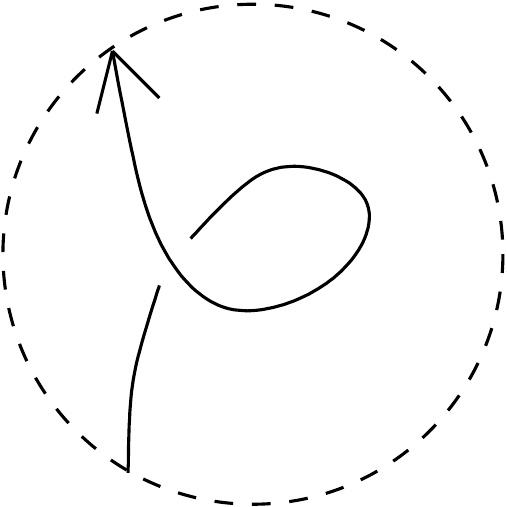}}\qquad
D'=\raisebox{-13pt}{\includegraphics[height=0.4in]{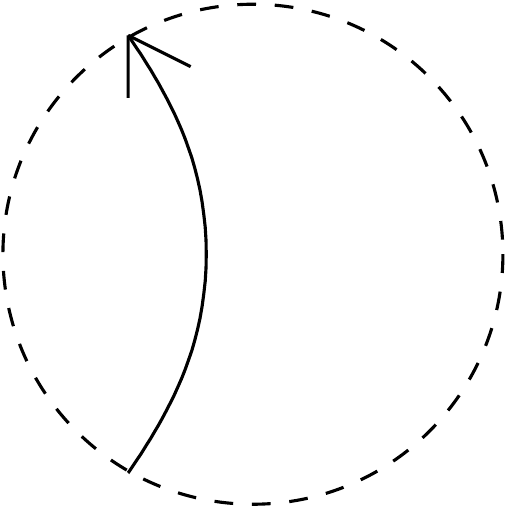}}$$

We give the homotopy equivalence between the formal complexes $[D_1]= (0 \longrightarrow \underline{\raisebox{-4pt}{\includegraphics[height=0.2in]{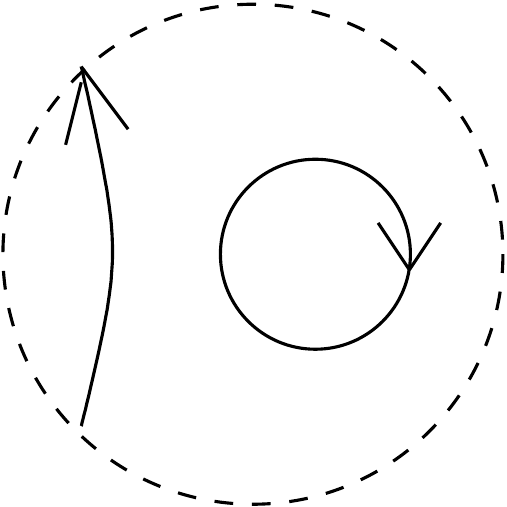}}\{-1\}}\stackrel{d}{\longrightarrow}\raisebox{-4pt} {\includegraphics[height=0.2in]{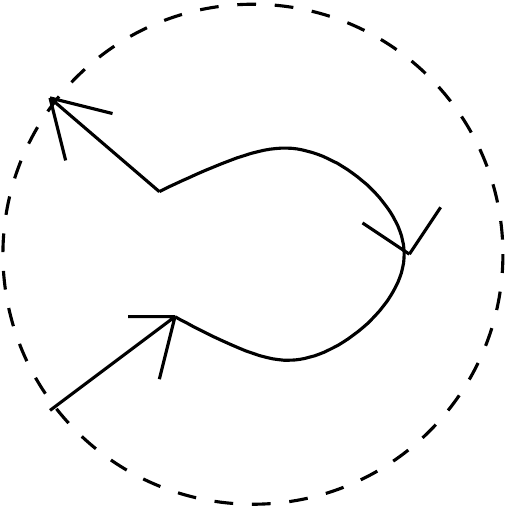}}\{-2\} \longrightarrow 0)$ and $[D']  = (0 \longrightarrow \underline{\raisebox{-4pt} {\includegraphics[height=0.2in]{reid1-1.pdf}}} \longrightarrow 0)$ in Figure~\ref{fig:R1a_Invariance}. We underlined the objects at the cohomological degree 0.
\begin{figure}[ht!]\begin{center}
$\xymatrix@R=27mm{
  [D'] \ar [d]^f:
\\
  [D_1] \ar@<4pt> [u]^g:
}
\xymatrix@C=45mm@R=22mm{
 \includegraphics[height=0.3in]{reid1-1.pdf}
  \ar@<3pt>[d]^{
        f^0 \;=\; \raisebox{-15pt}{\includegraphics[height=0.4in]{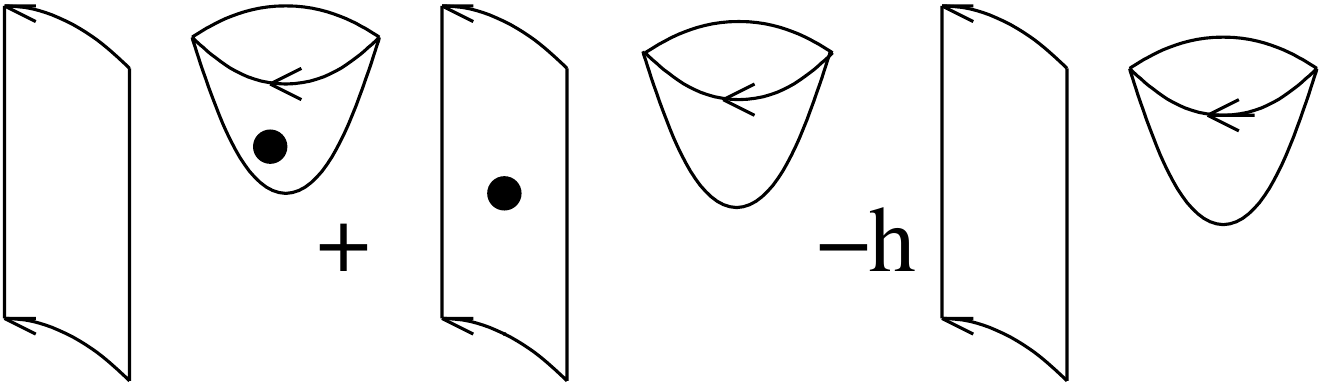}}} 
  \ar@<4pt>[r]^0
         &0 \\
 \includegraphics[height=0.3in]{reid1-2.pdf}\ar[r]^{
        d \;=\; \raisebox{-13pt}{\includegraphics[height=0.4in]{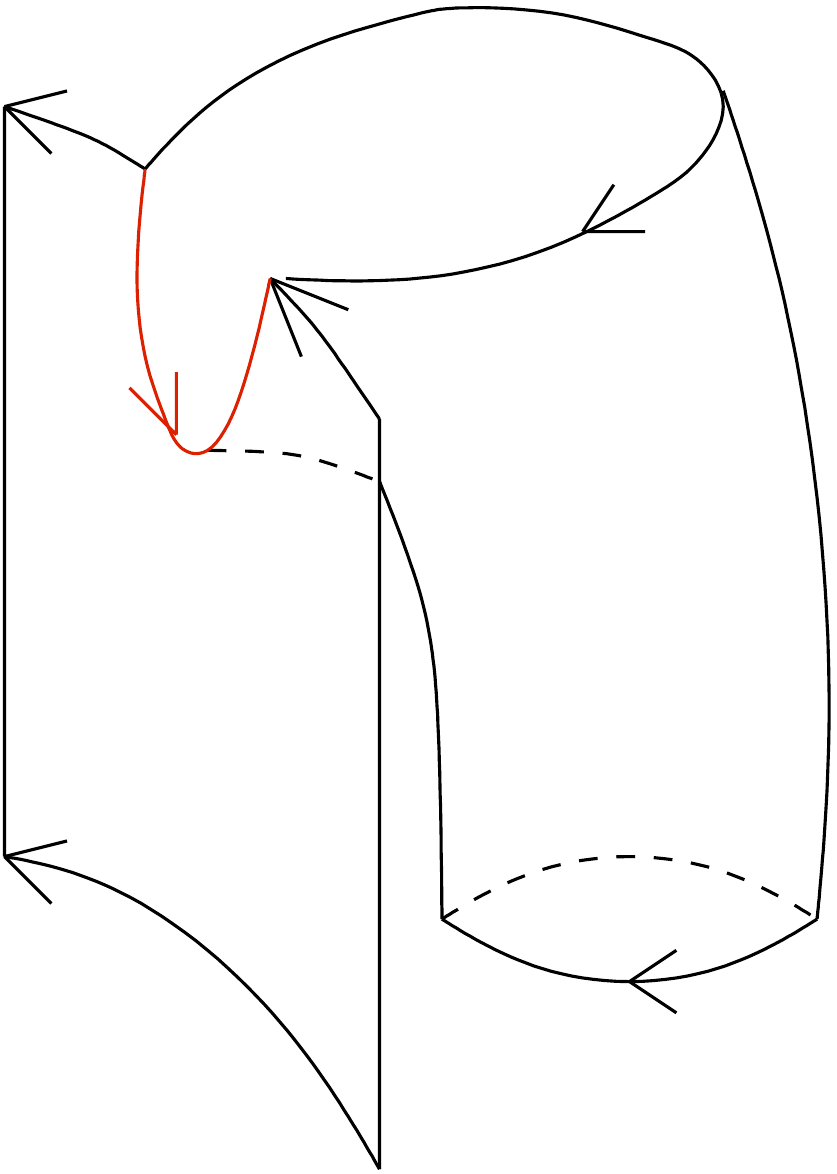}} }
  \ar@<3pt>[u]^{
        g^0 \;=\; \raisebox{-15pt}{\includegraphics[height=0.4in]{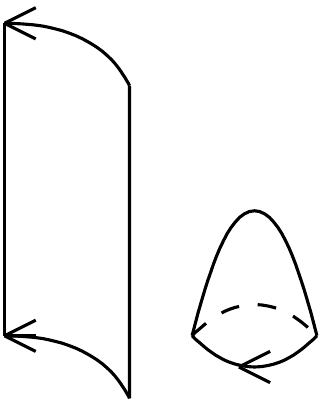}} } &
   \includegraphics[height=0.3in]{reid1-3.pdf} \ar@{<->}[u]_0 \ar@<2pt>[l]^{
     \tilde{h} \;=\; i \; \raisebox{-13pt}{\includegraphics[height=0.45in]{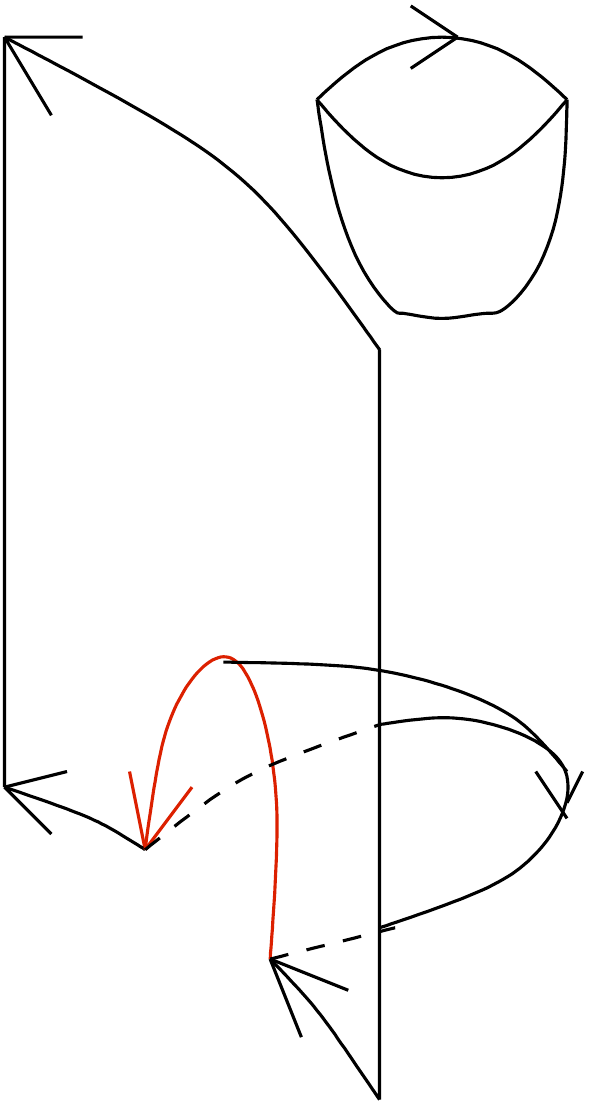}}  }
}$\end{center}
\caption{Invariance under $Reidemeister\,1a$} \label{fig:R1a_Invariance}
\end{figure}
The first (ED) identity implies that $df^0 = 0$ and (S) yields $g^0f^0 = \id(\raisebox{-4pt} {\includegraphics[height=0.2in]{reid1-1.pdf}}).$ The equality $d\tilde{h} = \id(\raisebox{-4pt} {\includegraphics[height=0.2in]{reid1-3.pdf}})$ follows from (CI). Finally, identity $f^0g^0 + \tilde{h}d  = \id(\raisebox{-4pt}{\includegraphics[height=0.2in]{reid1-2.pdf}})$ is obtained from (SF) and (SR). Therefore $[D_1]$ and $[D']$ are homotopy equivalent complexes.

\textit{Reidemeister\, 1b.}\quad
Consider diagrams $D_2$ and $D'$ that differ only in a circular region as in the following figure.
$$D_2=\raisebox{-13pt}{\includegraphics[height=0.4in]{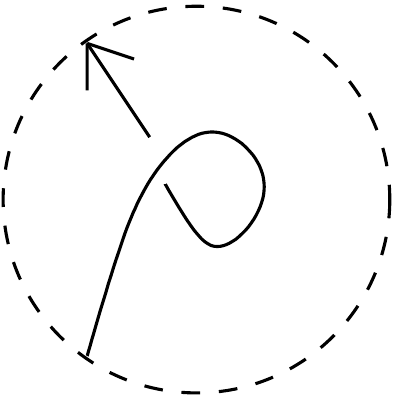}}\qquad
D'=\raisebox{-13pt}{\includegraphics[height=0.4in]{reid1-1.pdf}}$$

The diagram in Figure~\ref{fig:R1b_Invariance} gives the homotopy equivalence between formal complexes $[D_2] = (0 \longrightarrow \raisebox{-4pt}{\includegraphics[height=0.2in]{reid1-3.pdf}}\{2\}\stackrel{d}{\longrightarrow} \underline{\raisebox{-4pt} {\includegraphics[height=0.2in]{reid1-2.pdf}}\{1\}} \longrightarrow 0) \quad \text{and}\quad [D'] = (0 \longrightarrow \underline{\raisebox{-4pt} {\includegraphics[height=0.2in]{reid1-1.pdf}}} \longrightarrow 0).$
\begin{figure}[ht!]\begin{center}
\centerline{$\xymatrix@R=27mm{
  [D'] \ar [d]^f:
\\
  [D_2] \ar@<4pt>[u]^g:
}
\xymatrix@C=45mm@R=22mm{
0  \ar@<4pt>[r]^0
& \includegraphics[height=0.3in]{reid1-1}
  \ar@<3pt>[d]^{
       f^0 \;=\; \raisebox{-13pt}{\includegraphics[height=0.4in]{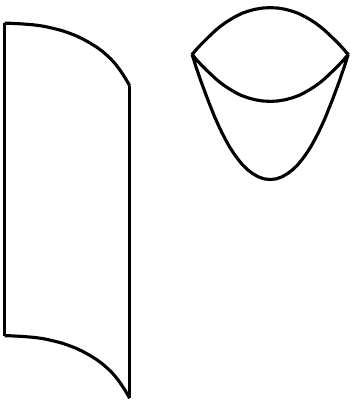}}} 
        \\
 \includegraphics[height=0.3in]{reid1-3} \ar@{<->}[u]_0
 \ar[r]^{
        d \;=\; \raisebox{-8pt}{\includegraphics[height=0.38in]{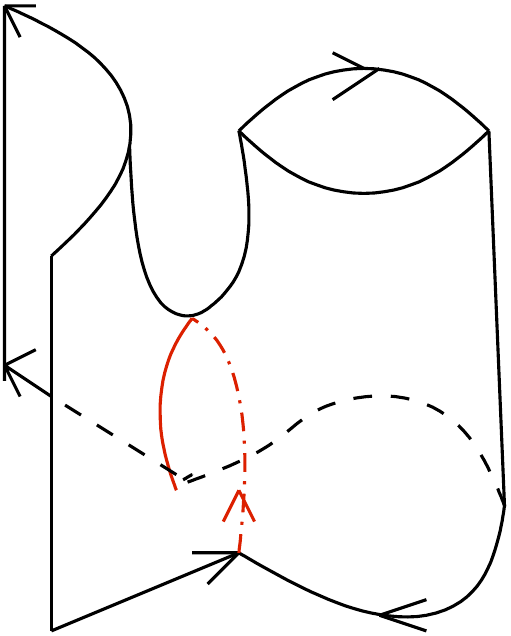}} }
 & \includegraphics[height=0.3in]{reid1-2} 
  \ar@<3pt>[u]^{
  g^0 \;=\; \raisebox{-13pt}{\includegraphics[height=0.4in]{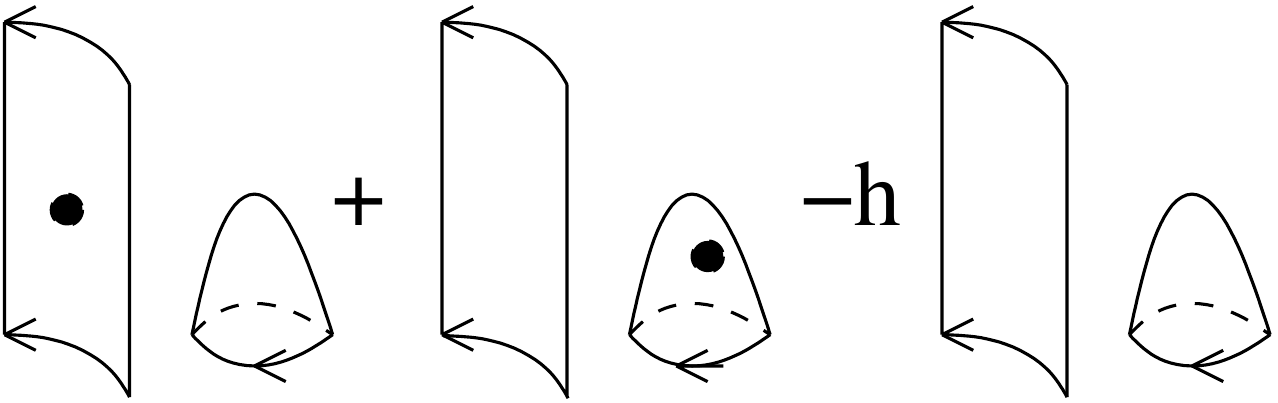}} } 
  \ar@<2pt>[l]^{
     \tilde{h} \;=\; i \;\raisebox{-10pt}{\includegraphics[height=0.38in]{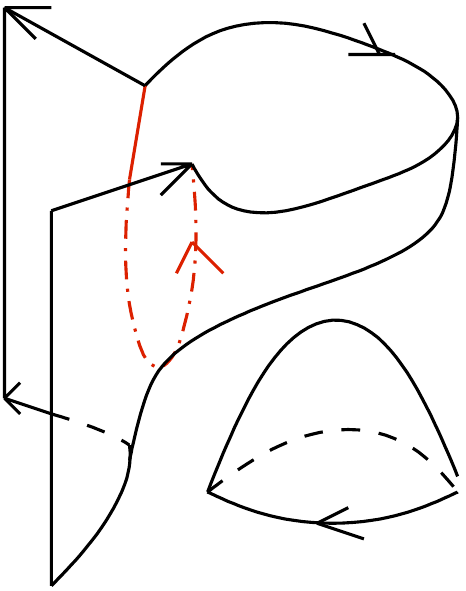}}  }
}$}\end{center}
\caption{Invariance under $Reidemeister\,1b$} \label{fig:R1b_Invariance}
\end{figure}

We have that $g^0f^0 = \id(\raisebox{-4pt} {\includegraphics[height=0.2in]{reid1-1.pdf}}),$ which follows from (S). The first (ED) identity implies $g^0d = 0,$ and (CI) gives $\tilde{h}d = \id(\raisebox{-4pt} {\includegraphics[height=0.2in]{reid1-3.pdf}}).$ Finally, $f^0g^0 + d\tilde{h} = \id(\raisebox{-4pt} {\includegraphics[height=0.2in]{reid1-2.pdf}})$ is obtained from (CI), (SF), (SR) and (ED). Thus $[D_2]$ is homotopy equivalent to $[D'].$

\textit{Reidemeister\, 2a.}\quad
Consider diagrams $D$ and $D'$ that differ in a circular region, as in figure below.
$$D=\raisebox{-13pt}{\includegraphics[height=0.4in]{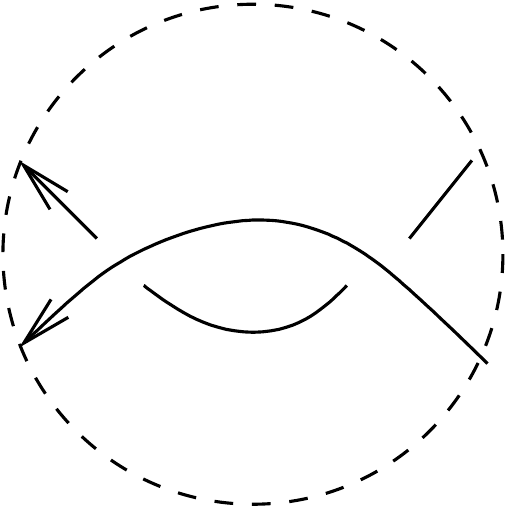}}\qquad
D'=\raisebox{-13pt}{\includegraphics[height=0.4in]{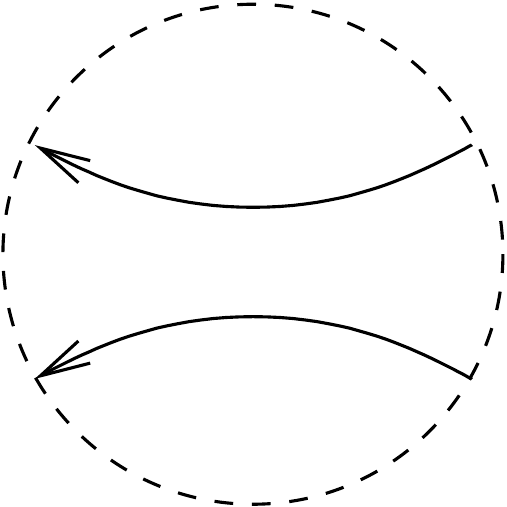}}\ $$
The homotopy equivalence between complexes $[D]$ and $[D']$ is given in Figure~\ref{fig:R2a_Invariance}, and we let the reader to check that the following equalities hold.
\begin{figure}[ht!]\begin{center}
\includegraphics[ height=3.5in]{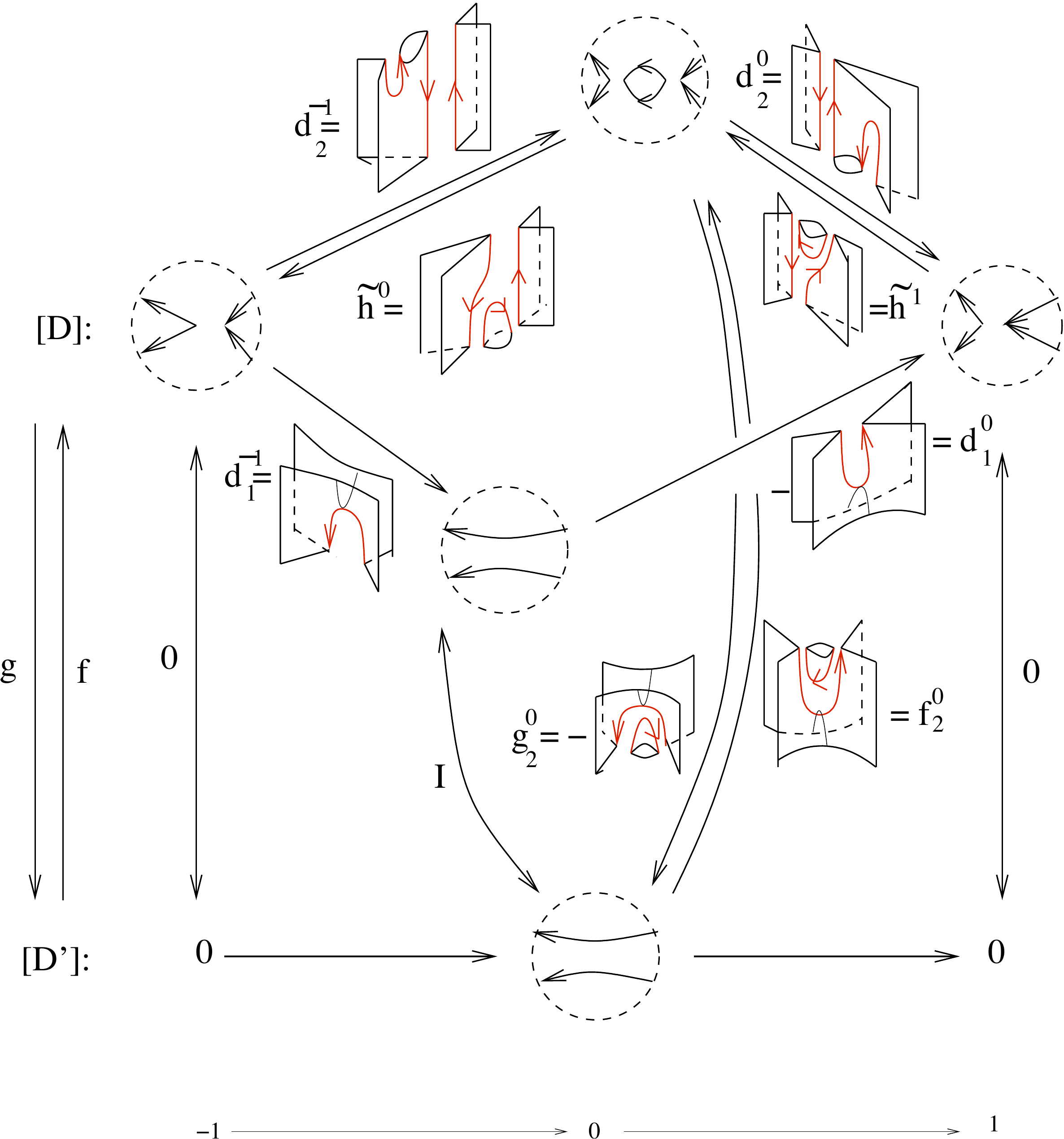}\end{center}
\caption{Invariance under $Reidemeister\,2a$} \label{fig:R2a_Invariance}
\end{figure}
\begin{itemize} 
\item $d_1^{-1} +  g_2^0d_2^{-1} = 0, \quad  d_1^0 + d_2^0f_2^0 = 0$ (it uses isotopies)
\item  $\tilde{h}^0d_2^{-1} = \id(\raisebox{-4pt} {\includegraphics[height=0.2in]{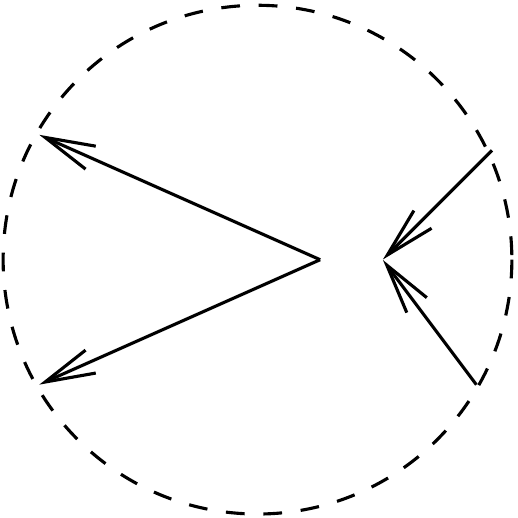}}), \quad d_2^0\tilde{h}^{1} = \id(\raisebox{-4pt} {\includegraphics[height=0.2in]{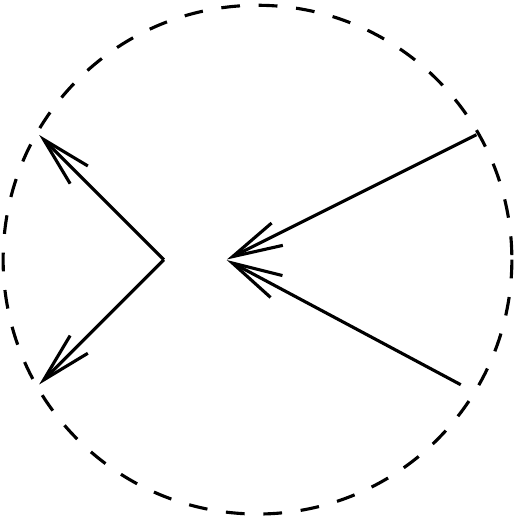}})$ (it uses isotopies)
\item $ f_2^0g_2^0 + d_2^{-1}\tilde{h}^0 + \tilde{h}^{1}d_2^0= \id(\raisebox{-4pt} {\includegraphics[height=0.2in]{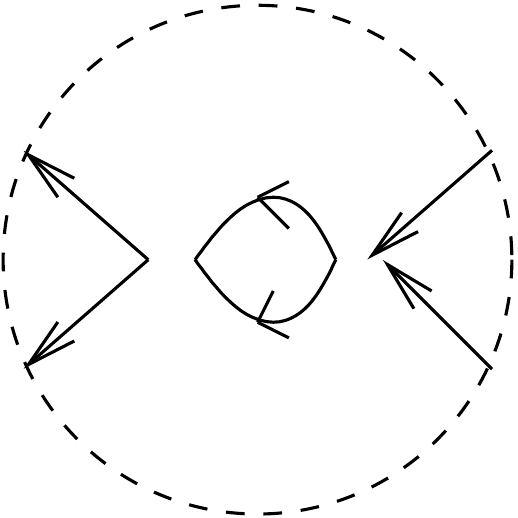}})$ (it follows from (CN))
\item $g_2^0f_2^0 = 0$ (it follows from (UFO)), thus $gf = \id(\raisebox{-4pt} {\includegraphics[height=0.2in]{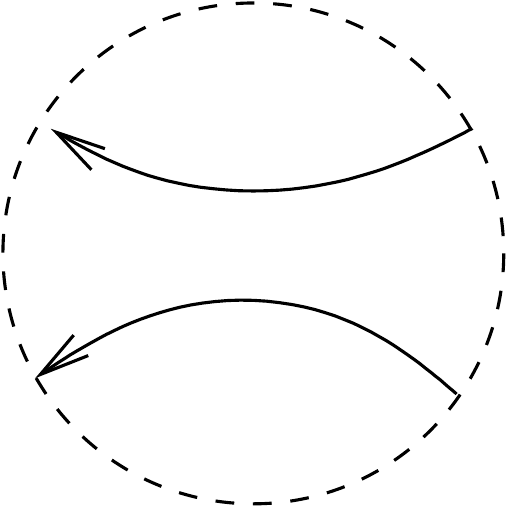}}).$
\end{itemize}

\textit{Reidemeister\, 2b.}\quad
Consider diagrams $D$ and $D'$ depicted below.
$$D=\raisebox{-13pt}{\includegraphics[height=0.4in]{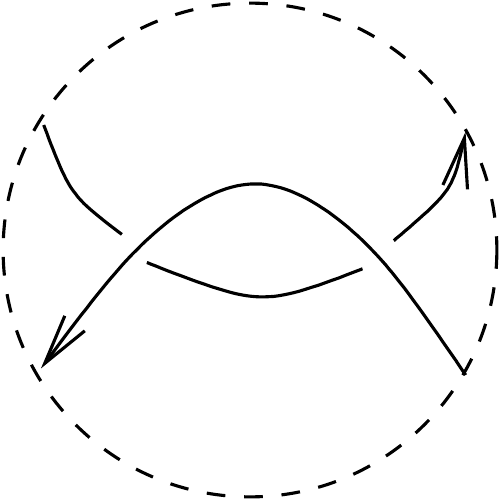}}\qquad
D'=\raisebox{-13pt}{\includegraphics[height=0.4in]{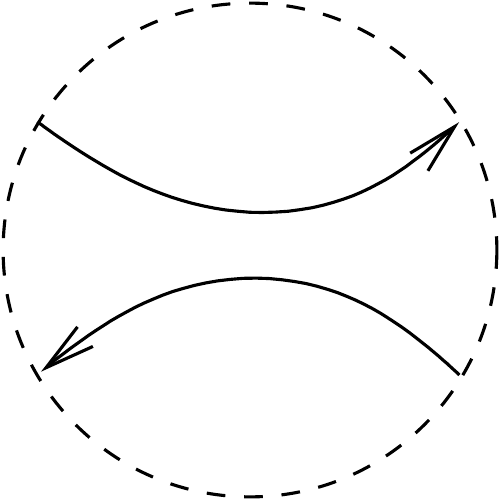}}\ $$
Checking that the diagram in Figure~\ref{fig:R2b_Invariance} defines a homotopy between $[D]$ and $[D']$ is left to the reader:

\begin{itemize}
\item $g_1^0 d_1^{-1} +  g_2^0d_2^{-1} = 0, \quad  d_1^0f_1^0 + d_2^0f_2^0 = 0$ (it uses isotopies)
\item $\tilde{h}_2^0d_2^{-1}  = \id( \raisebox{-4pt} {\includegraphics[height=0.2in]{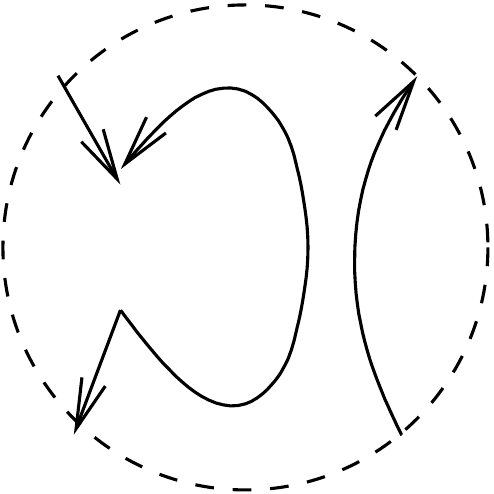}}),\quad  d_2^0\tilde{h}_2^1  = \id(\raisebox{-4pt} {\includegraphics[height=0.2in]{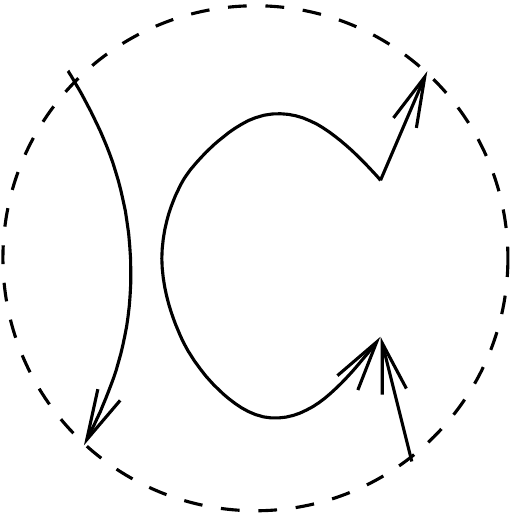}}), \quad f_1^0g_1^0 = \id(\raisebox{-4pt} {\includegraphics[height=0.2in]{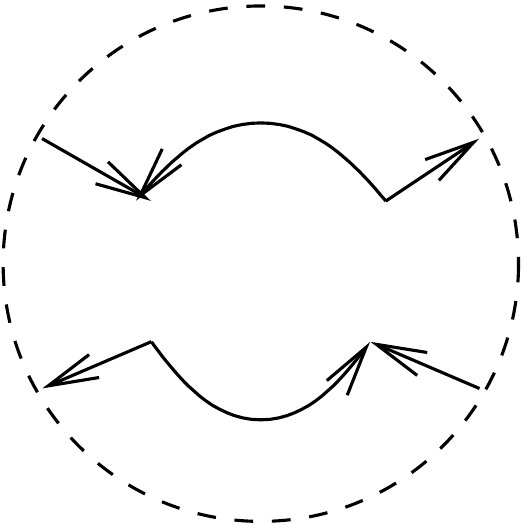}})$ (it follows from (CI))
 \item $f_2^0g_2^0 + d_2^{-1}\tilde{h}_2^0 +\tilde{h}_2^1d_2^0= \id(\raisebox{-4pt} {\includegraphics[height=0.2in]{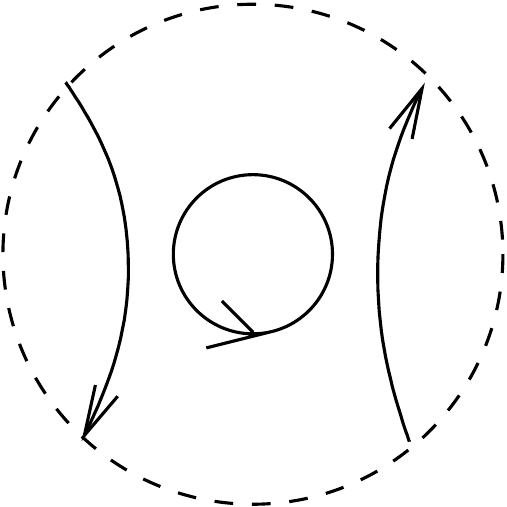}})$ (it follows from (SF) and (SR))
 \item  $g_1^0f_1^0 + g_2^0f_2^0 = \id(\raisebox{-4pt} {\includegraphics[height=0.2in]{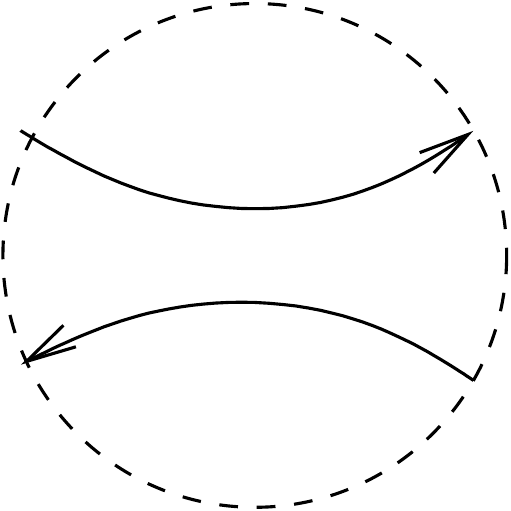}})$ (it follows from (S) and (SR)).
 \end{itemize}
 
\begin{figure}[ht!]\begin{center}
\includegraphics[width= 3.5in, height = 3.8in]{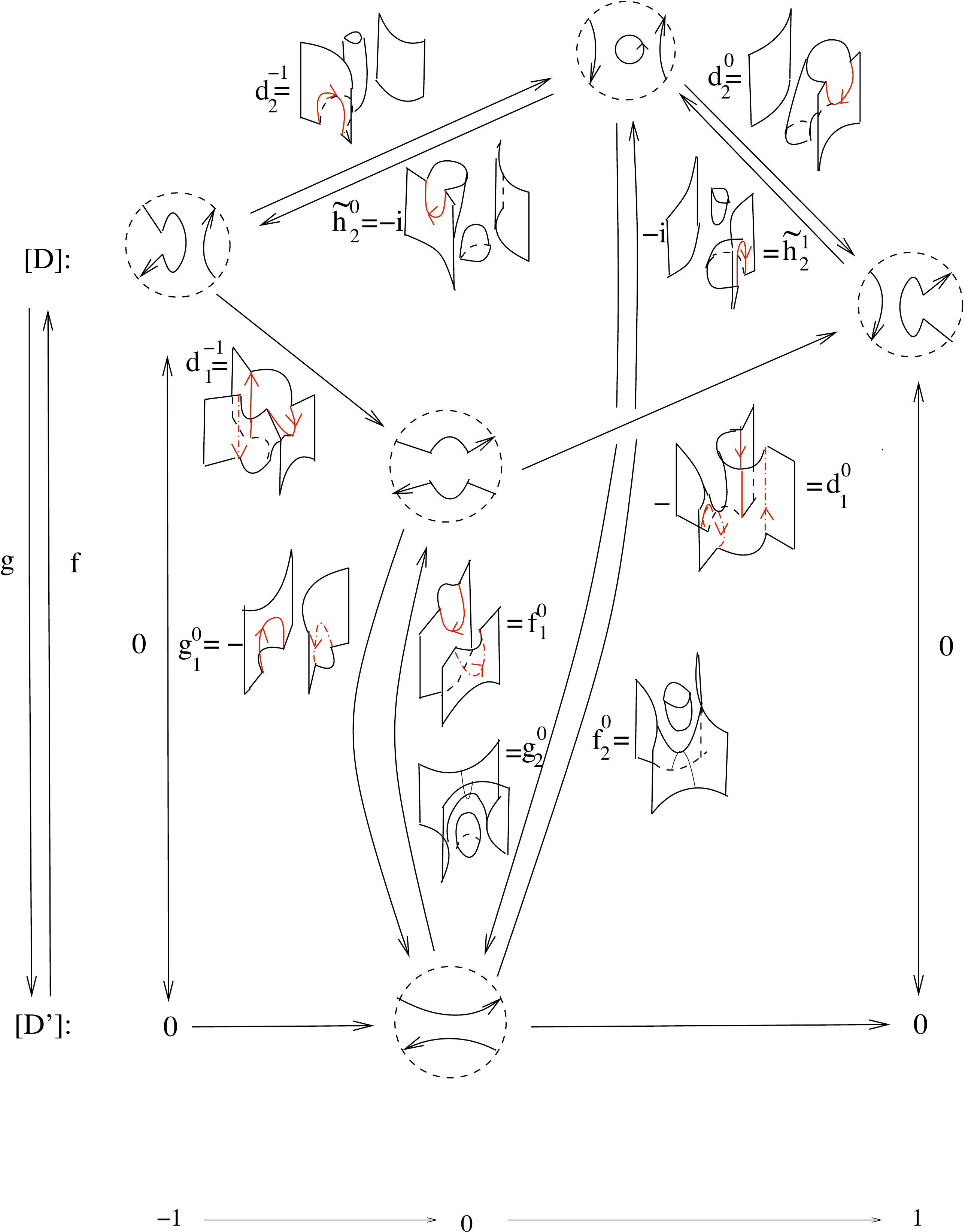}\end{center}
\caption{Invariance under $Reidemeister\,2b$} \label{fig:R2b_Invariance}
\end{figure}

\textit{Reidemeister\, 3.}\quad Any two Reidemeister moves of type 3 are equivalent modulo type 1 and 2 moves, thus it suffices to approach one case of the type 3 moves. We choose the one in figure below.
$$D = \raisebox{-10pt}{\includegraphics[height=.4in]{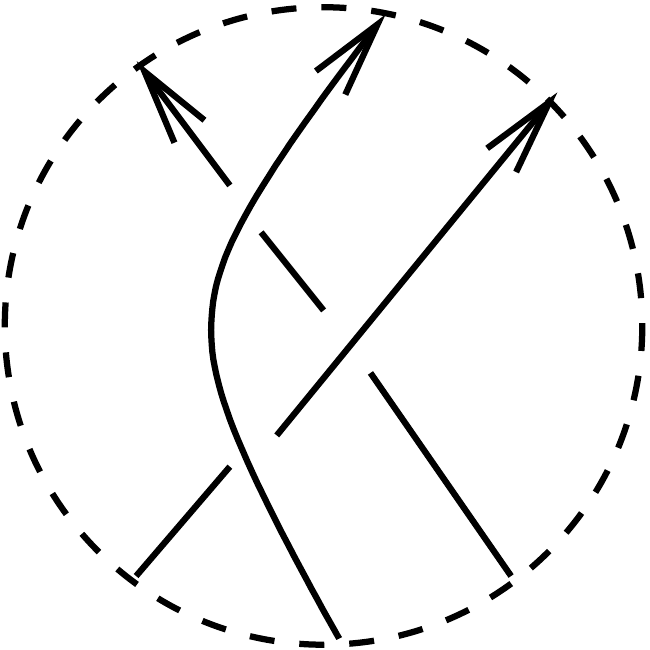}} \hspace{2cm} D' = \raisebox{-10pt}{\includegraphics[height=.4in]{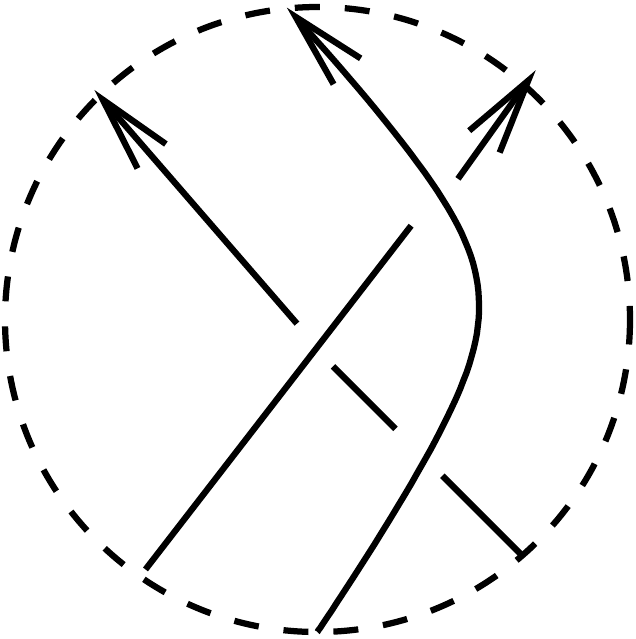}}$$

Given a morphism of complexes $\Phi,$ we denote its mapping cone by $\mathbf{M}(\Phi).$ Notice that  the mapping cone is invariant under composition with isomorphisms (see~\cite[Lemma 6.4]{CC}). Moreover, it was proved in~\cite{BN1} that the mapping cone construction is invariant up to homotopy under composition with strong deformation retracts, and with inclusions in strong deformation retracts. From the proof of invariance under type 2 Reidemeister moves, we know that morphisms $\raisebox{-5pt}{\includegraphics[height=0.25in]{twoarcs.pdf}}\stackrel{f} {\rightarrow} \raisebox{-5pt}{\includegraphics[height=0.25in]{Dreid2a.pdf}}$ and  $\raisebox{-5pt}{\includegraphics[height=0.25in]{twoarcsop.pdf}}\stackrel{f} {\rightarrow} \raisebox{-5pt}{\includegraphics[height=0.25in]{Dreid2b.pdf}}$
are inclusions in the strong deformation retracts $\raisebox{-5pt}{\includegraphics[height=0.25in]{Dreid2a.pdf}}\stackrel{g} {\rightarrow} \raisebox{-5pt}{\includegraphics[height=0.25in]{twoarcs.pdf}}$ and $\raisebox{-5pt}{\includegraphics[height=0.25in]{Dreid2b.pdf}}\stackrel{g} {\rightarrow} \raisebox{-5pt}{\includegraphics[height=0.25in]{twoarcsop.pdf}}$ respectively.
 
We have that $[\,\raisebox{-5pt}{\includegraphics[height=0.2in]{poscrossing.pdf}}\,] = \mathbf{M}(\,[\,\raisebox{-5pt}{\includegraphics[height=0.2in]{singres.pdf}}\,]\, \longrightarrow \, [ \,\raisebox{-5pt}{\includegraphics[height=0.2in]{orienres.pdf}}\, ]\,)$ and $[ \,\raisebox{-5pt}{\includegraphics[height=0.2in]{negcrossing.pdf}}\,] = \mathbf{M}(\,[ \,\raisebox{-5pt}{\includegraphics[height=0.2in]{orienres.pdf}}\, ] \longrightarrow \, [ \,\raisebox{-5pt}{\includegraphics[height=0.2in]{singres.pdf}}\, ]\,)[-1],$ where [$s$] is the shift operator that shifts complexes $s$ steps to the left.

Therefore, the complex $[\, \raisebox{-8pt}{\includegraphics[height=.25in]{reid3left.pdf}}\,]$ (or complex $[\, \raisebox{-8pt}{\includegraphics[height=.25in]{reid3right.pdf}}\,]$) is the cone of the morphism $\Phi_1$ (or $\Phi_2$) given in Figure~\ref{fig:reid3-1}, morphism that switches between the two resolutions of the central crossing (note that we could have used any crossing of the diagram). 
\begin{figure}[ht!]\begin{center}
\includegraphics[height=2.5in]{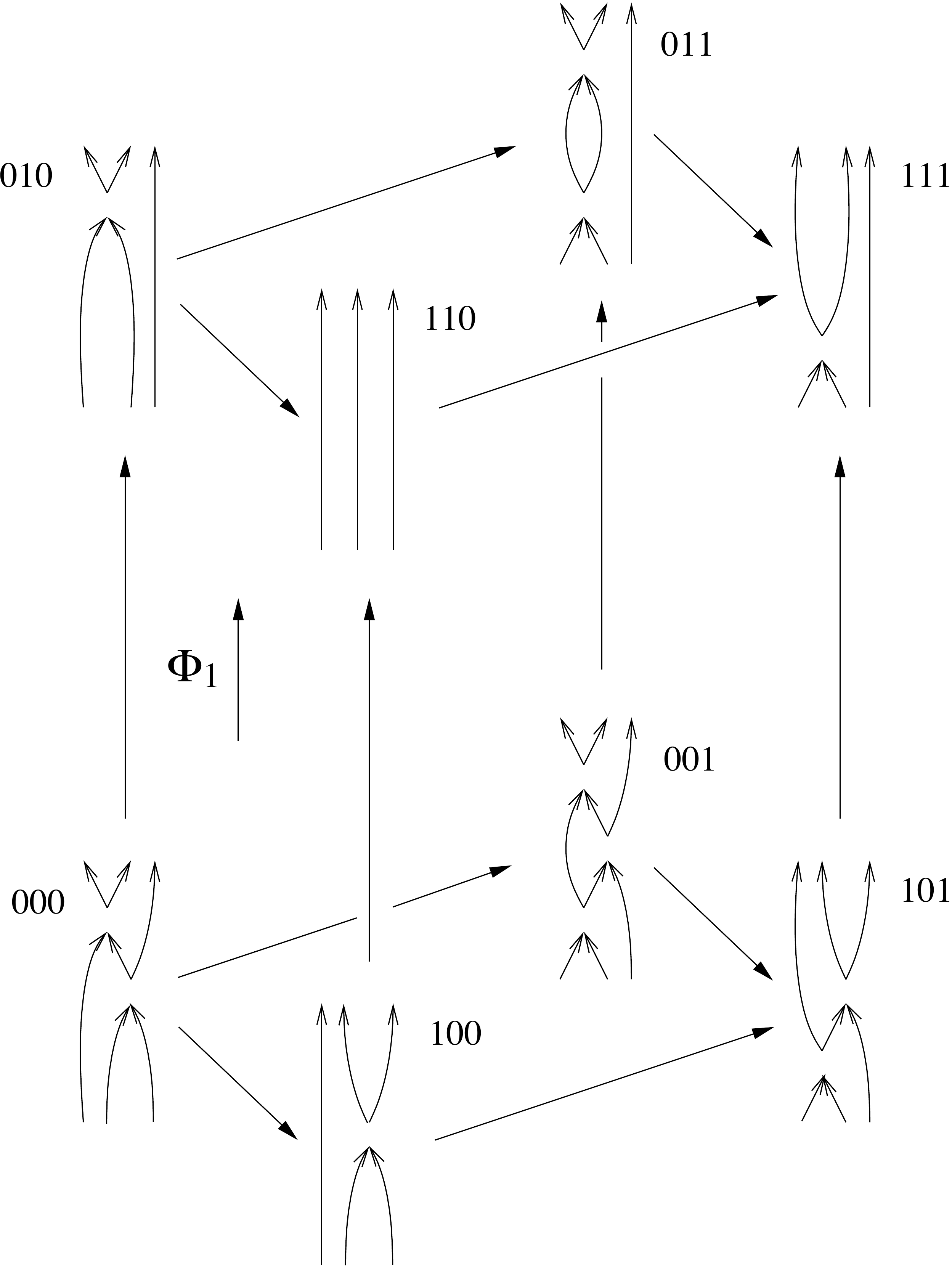}\hspace{2cm}
\includegraphics[height=2.5in]{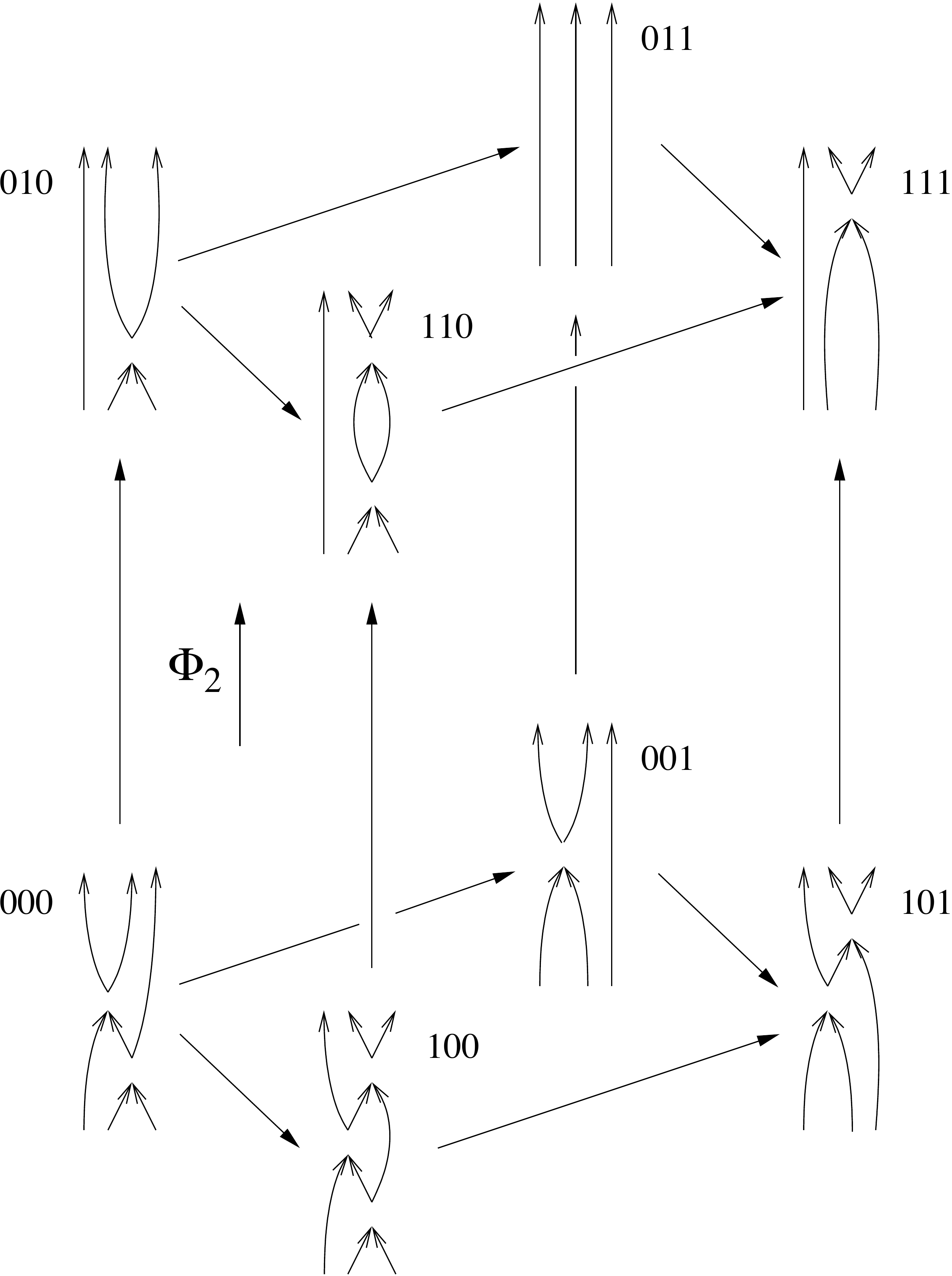}\end{center}
\caption{Cubes of resolutions of $D$ and $D'$}\label{fig:reid3-1}\end{figure}

The top layers of the cubes contain the four resolutions corresponding to a Reidemeister 2a move and an additional vertical string. Composing the morphisms $\Phi_1$ and $\Phi_2$ with strong deformation retracts $g,$ we can replace the top layers with the resolution containing three vertical strings. In particular, the complex $[ \, \raisebox{-8pt}{\includegraphics[height=.25in]{reid3left.pdf}}\,]$ is homotopy equivalent to the cone of the morphism $\Phi_L = g \Phi_1,$ while $[\, \raisebox{-8pt}{\includegraphics[height=.25in]{reid3right.pdf}}\,]$ is homotopy equivalent to the cone of $\Phi_R = g \Phi_2.$
\begin{figure}[ht!]\begin{center}
\raisebox{-13pt}{\includegraphics[height=2.2in]{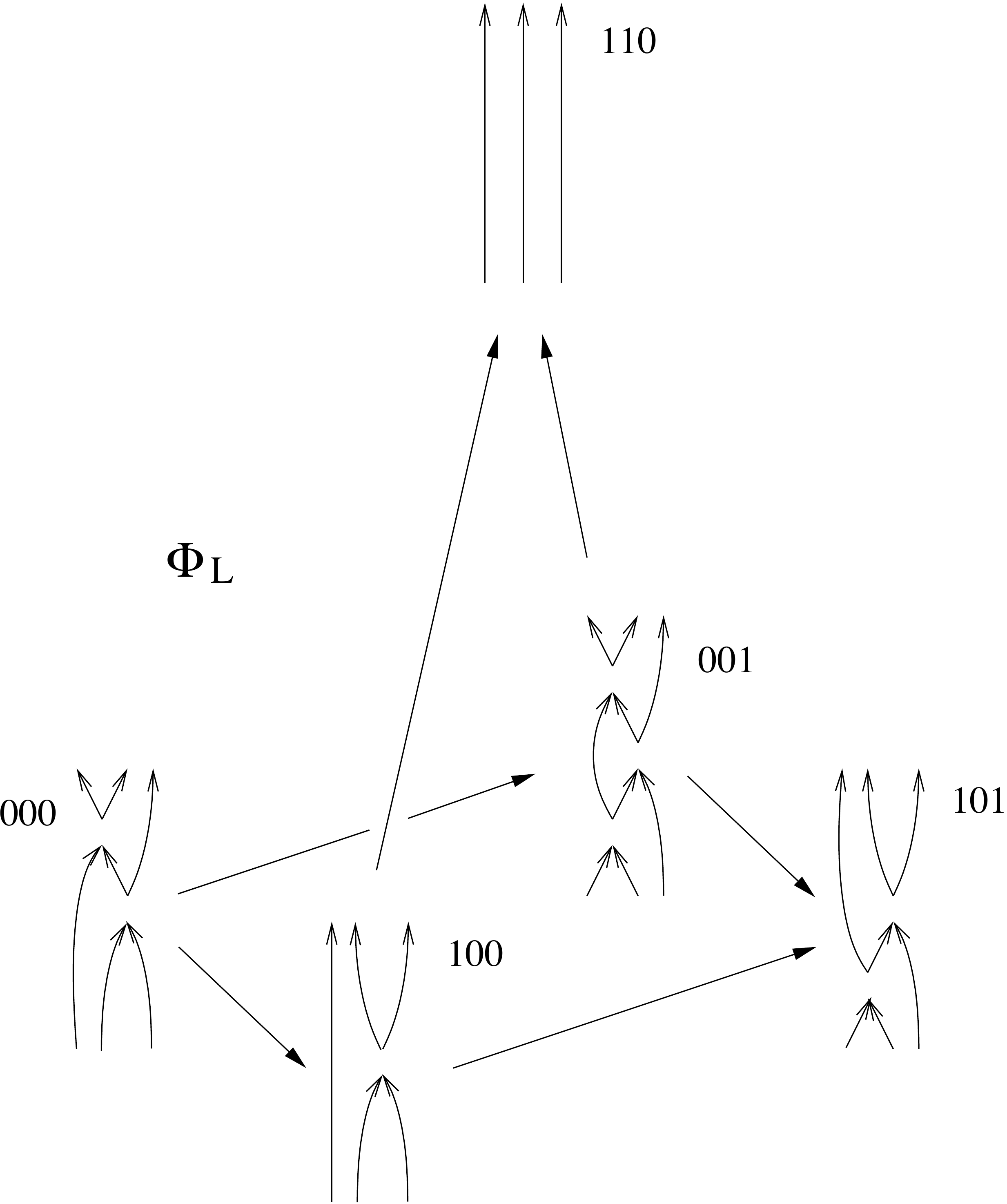}}\hspace{2cm}
\raisebox{-13pt}{\includegraphics[height=2.2in]{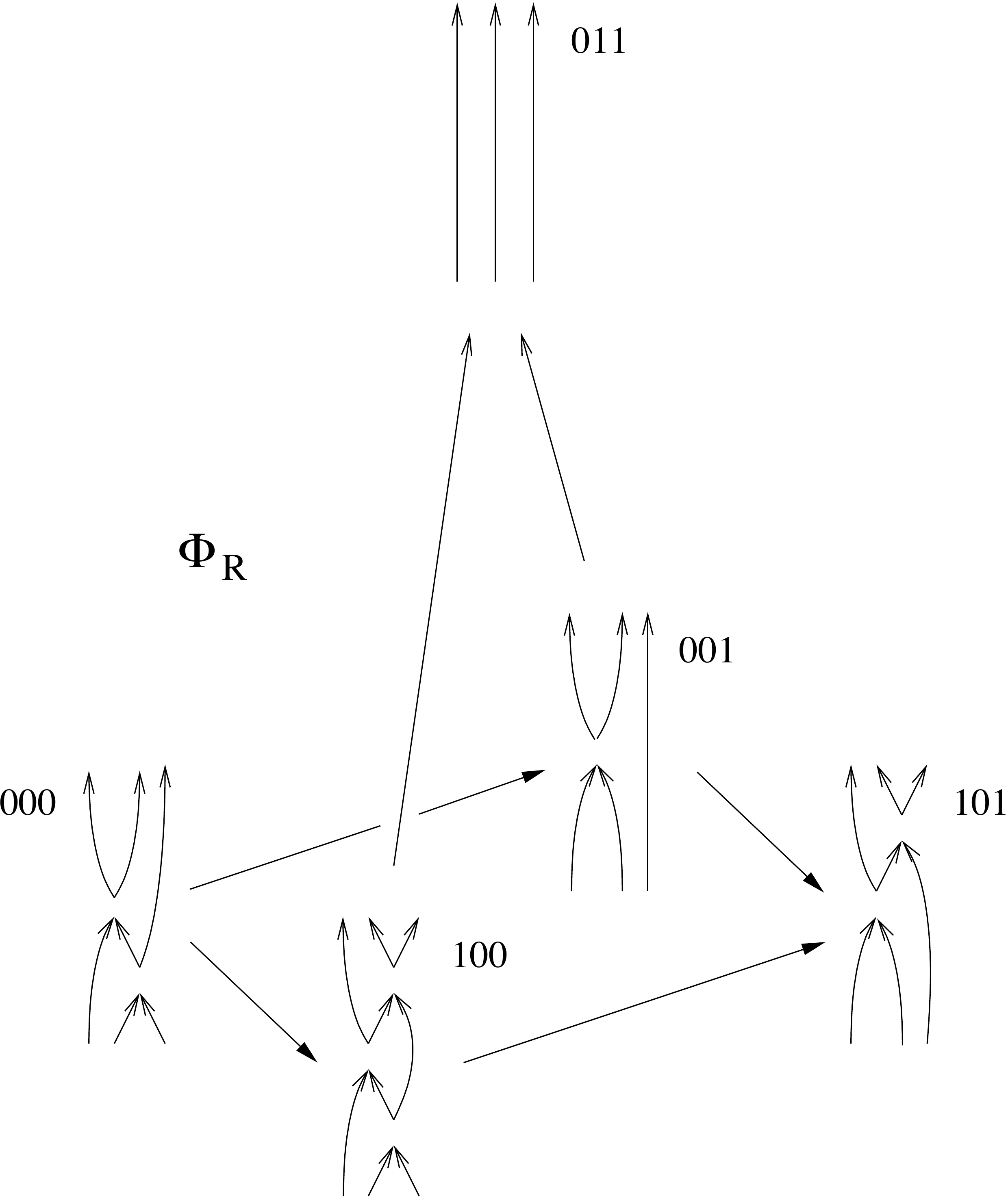}}\end{center}
\caption{Morphisms $\Phi_L$ and $\Phi_R$}\label{fig:reid3-2}
\end{figure}
In other words, we have:
 \begin{align*}
[\,\raisebox{-8pt}{\includegraphics[height=0.3in]{reid3left.pdf}}\,] &= \mathbf{M}\left([\,\raisebox{-8pt}{\includegraphics[height=0.3in]{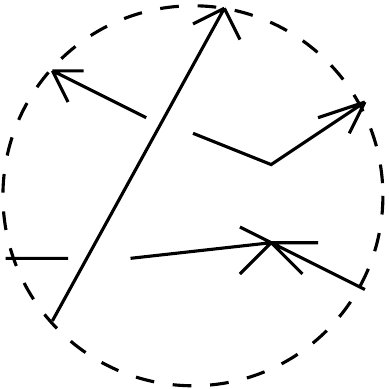}}\,] \xrightarrow{\Phi_1} [\,\raisebox{-8pt}{\includegraphics[height=0.3in]{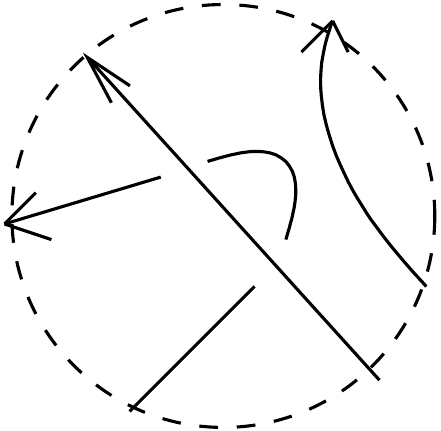}}\,] \right)\xrightarrow {\cong} 
 \mathbf{M}\left([\,\raisebox{-8pt}{\includegraphics[height=0.3in]{reid3-unor4.pdf}}\,] \xrightarrow{\Phi_1} [\,\raisebox{-8pt}{\includegraphics[height=0.3in]{reid3-8.pdf}}\,] \xrightarrow{g} [\,\raisebox{-8pt}{\includegraphics[height=0.3in]{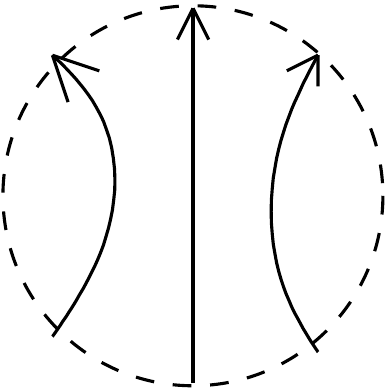}}\,] \right)  = \mathbf{M}(\Phi_L),\\ 
[\,\raisebox{-8pt}{\includegraphics[height=0.3in]{reid3right.pdf}}\,] &= \mathbf{M}\left([\,\raisebox{-8pt}{\includegraphics[height=0.3in]{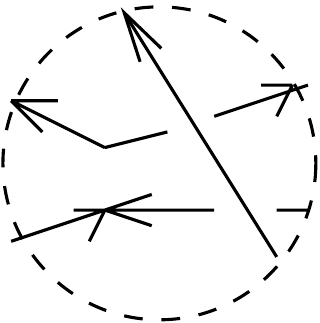}}\,] \xrightarrow{\Phi_2} [\,\raisebox{-8pt}{\includegraphics[height=0.3in]{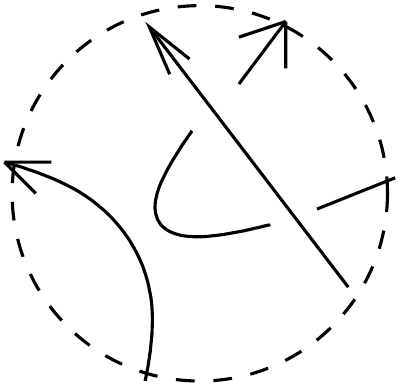}}\,] \right)\xrightarrow {\cong} 
 \mathbf{M}\left([\,\raisebox{-8pt}{\includegraphics[height=0.3in]{reid3-unor3.pdf}}\,] \xrightarrow{\Phi_2} [\,\raisebox{-8pt}{\includegraphics[height=0.3in]{reid3-7.pdf}}\,] \xrightarrow{g} [\,\raisebox{-8pt}{\includegraphics[height=0.3in]{reid3-or.pdf}}\,] \right)  = \mathbf{M}(\Phi_R).\end{align*} 

The resolution $ijk$ in the left drawing of Figure~\ref{fig:reid3-2} is either isotopic or isomorphic in $\textit{Foams}_{/\ell}$ to the resolution $ijk$ in the right drawing of the same figure (see Corollary~\ref{cor:Isomorphisms 1 and 2}). We claim that the cones of the morphisms $\Phi_L$ and  $\Phi_R$ are isomorphic.

The chain complexes associated to the diagrams \raisebox{-8pt}{\includegraphics[height=0.3in]{reid3-unor4.pdf}} and \raisebox{-8pt}{\includegraphics[height=0.3in]{reid3-unor3.pdf}} are isomorphic in the category $\textit{Kof}_{/h},$ and the corresponding isomorphism is given in Figure~\ref{fig:sliding1}.
\begin{figure}[ht]
 \raisebox{-13pt}{\includegraphics[height=2.7in]{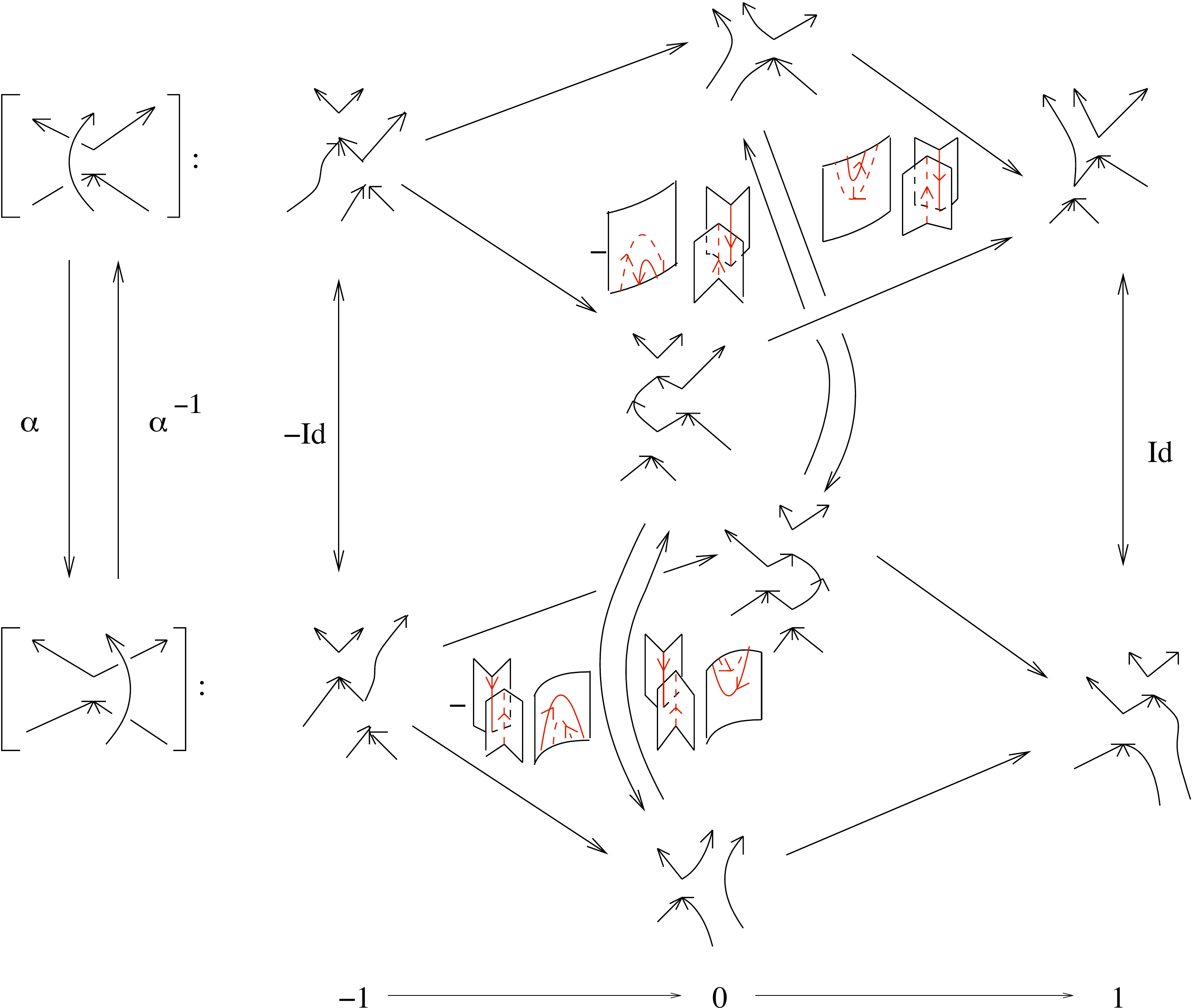}}
 \caption{Isomorphism $\alpha$}
 \label{fig:sliding1}
 \end{figure}

We obtain that
\begin{align*}
\mathbf{M}(\Phi_R)  \xrightarrow {\cong}  \mathbf{M}\left([\,\raisebox{-8pt}{\includegraphics[height=0.3in]{reid3-unor4.pdf}}\,] \xrightarrow{\alpha} [\,\raisebox{-8pt}{\includegraphics[height=0.3in]{reid3-unor3.pdf}}\,] \xrightarrow{\Phi_2} [\,\raisebox{-8pt}{\includegraphics[height=0.3in]{reid3-7.pdf}}\,] \xrightarrow{g} [\,\raisebox{-8pt}{\includegraphics[height=0.3in]{reid3-or.pdf}}\,] \right)
\end{align*}
where $\alpha$ is the isomorphisms depicted in Figure~\ref{fig:sliding1}.
To complete the proof, we show that the following compositions of chain maps give the same answer:
\[ [\,\raisebox{-8pt}{\includegraphics[height=0.3in]{reid3-unor4.pdf}}\,] \xrightarrow{\Phi_1} [\,\raisebox{-8pt}{\includegraphics[height=0.3in]{reid3-8.pdf}}\,] \xrightarrow{g} [\,\raisebox{-8pt}{\includegraphics[height=0.3in]{reid3-or.pdf}}\,] \quad \mbox{and} \quad
 [\,\raisebox{-8pt}{\includegraphics[height=0.3in]{reid3-unor4.pdf}}\,] \xrightarrow{\alpha} [\,\raisebox{-8pt}{\includegraphics[height=0.3in]{reid3-unor3.pdf}}\,] \xrightarrow{\Phi_2} [\,\raisebox{-8pt}{\includegraphics[height=0.3in]{reid3-7.pdf}}\,] \xrightarrow{g} [\,\raisebox{-8pt}{\includegraphics[height=0.3in]{reid3-or.pdf}}\,] .\]
 
 There are four morphisms in each of these compositions, but two of them are zero. The non-trivial maps are depicted below.
 \[ \includegraphics[height=1.8in]{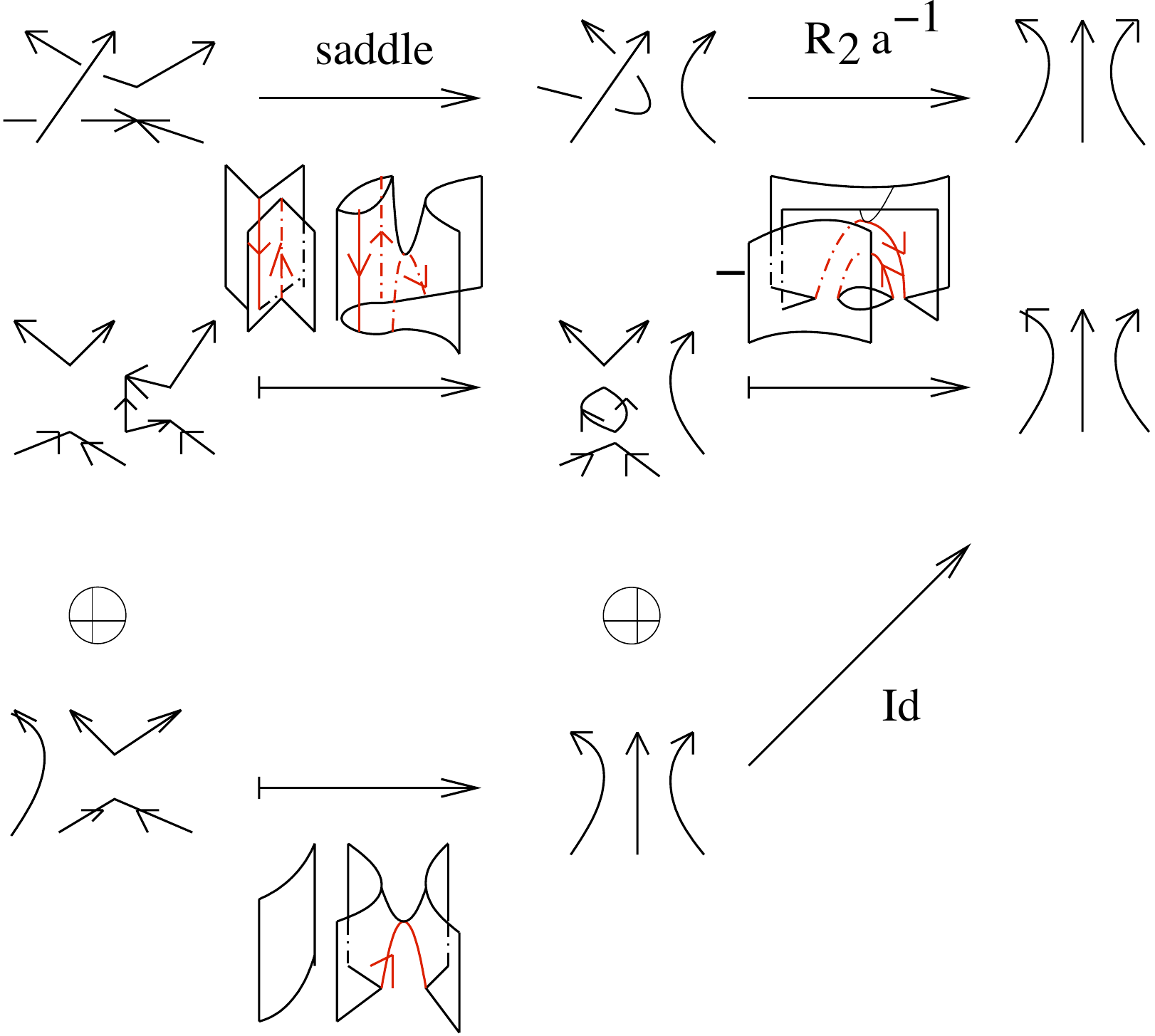}\quad \includegraphics[height=1.8in]{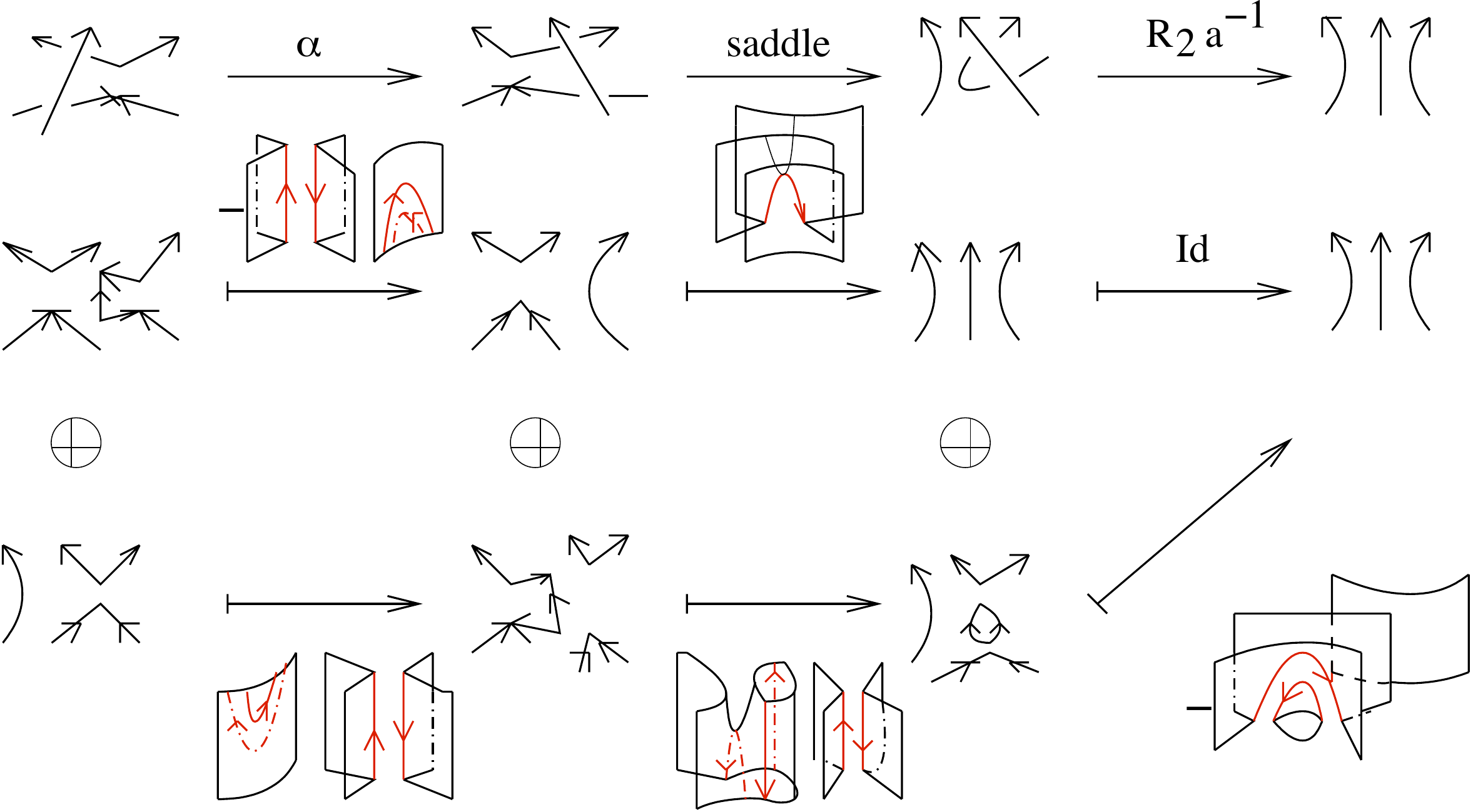}\]
Composing the morphisms above, and applying the `sheet relations' (SR) for the second component of the morphism on the right, we obtain that the non-trivial components of the two chain maps are, up to isotopy, equal to $ \left (\begin{array}{cc} -\raisebox{-8pt}{\includegraphics[height=0.4in]{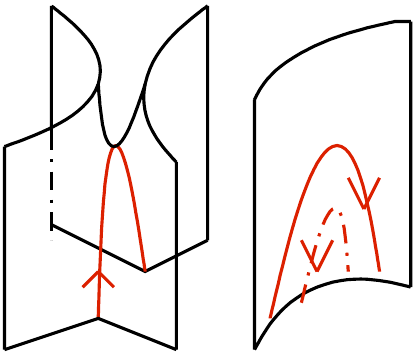}}\,\,\,, & \raisebox{-8pt}{\includegraphics[height=0.4in]{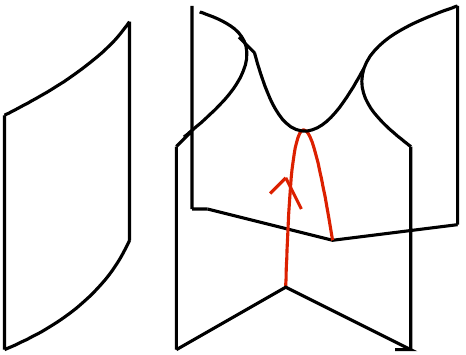}} \end{array} \right ).$

Therefore, the mapping cones $\mathbf{M}(\Phi_L)$ and $\mathbf{M}(\Phi_R)$ are isomorphic, and thus the complex $[\, \raisebox{-8pt}{\includegraphics[height=.25in]{reid3left.pdf}}\,]$ is homotopy equivalent to the complex $[\, \raisebox{-8pt}{\includegraphics[height=.25in]{reid3right.pdf}}\,].$ This completes the proof of the invariance under Reidemeister moves of type 3. \end{proof}

 
\subsection{Functoriality}\label{sec:functoriality}

Let $\textit{Cob}^4_{/i}$ be the category of oriented tangles and ambient isotopy classes of tangle cobordisms properly embedded in the 4-dimensional space.

\begin{theorem}\label{thm:functoriality}
$[ \cdot]$ induces a degree-preserving functor $\textit{Cob}^4_{/i} \to \textit{Kof}_{/h}.$
\end{theorem}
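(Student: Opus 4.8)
\textbf{Proof strategy for Theorem~\ref{thm:functoriality}.}

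The plan is to follow the standard ``movie-move'' approach to functoriality, adapted to the web-and-foam setting, exactly as in Bar-Natan~\cite{BN1} and in the precursor~\cite{CC}, but now tracking carefully that no sign ambiguity arises. First I would recall that a tangle cobordism in $\textit{Cob}^4_{/i}$ can be presented as a \emph{movie}: a finite sequence of tangle diagrams in which consecutive frames differ by a single Reidemeister move or by a single elementary Morse modification (birth, death, or saddle). To such a movie we assign a morphism of complexes in $\textit{Kof}_{/h}$ by composing: to each Reidemeister frame the homotopy equivalence constructed in the proof of Theorem~\ref{invariance}, and to each Morse modification the obvious cup-, cap-, or saddle-foam (viewed as a chain map, up to grading shift). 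Well-definedness then requires checking two things: (i) the assignment does not depend on the chosen movie presentation of a fixed cobordism, and (ii) the resulting functor is degree-preserving.

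For (i) I would invoke the theorem of Carter--Rieger--Saito that two movies represent ambient-isotopic tangle cobordisms if and only if they are related by a finite sequence of \emph{movie moves} (the fifteen moves MM1--MM15). Thus it suffices to verify that each movie move induces the identity morphism in $\textit{Kof}_{/h}$ --- i.e.\ that the two composites of chain maps associated to the two sides of each movie move are equal \emph{on the nose}, not merely up to sign. This is precisely where the ground ring $\mathbb{Z}[i]$ and the ordering conventions on facets pay off: the local relations $\ell$ and the derived identities (SR), (RSC), (CI), (CN) of Lemmas~\ref{handy relations} and~\ref{nice relations} have coefficients involving $i$ that are rigidly fixed by the facet-ordering choices, so the potential $\pm 1$ discrepancies that plague classical Khovanov homology are resolved. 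Many of the movie moves (those involving only Reidemeister frames, such as MM6--MM10) reduce to statements about the homotopy equivalences already built in Theorem~\ref{invariance} and can be cited from~\cite{CC}; the moves mixing Morse modifications with Reidemeister moves (MM11--MM15) and the ``sphere'' and ``tube'' moves (MM1--MM5) require direct foam computations using (S), (2D), (SF), (UFO) and the genus-reduction formula. For (ii), degree-preservation is immediate from the degree formula $\deg(S) = -\chi(S) + \tfrac12|B| + 2d$: each elementary foam (cup, cap, saddle) has the degree recorded in Section~\ref{sec:complex}, each Reidemeister homotopy equivalence is degree $0$ by construction, and the grading shifts $\{r\}$ in the complex $[T]$ are chosen so that the total degree of a cobordism-induced map equals $-\chi$ of the cobordism surface (with the boundary correction), which is an isotopy invariant.

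The main obstacle I expect is the sign-rigidity verification for the movie moves that involve a closed component being created and destroyed, together with a Reidemeister~1 curl --- essentially MM14 and MM15 in Carter--Saito's list --- since these are exactly the configurations where the historical sign ambiguity lives. Here one must carefully compose the explicit homotopy equivalences $f,g,\tilde h$ from Figures~\ref{fig:R1a_Invariance} and~\ref{fig:R1b_Invariance} (whose homotopies already carry a factor of $i$) with the birth/death foams and check that the two sides agree; the computation uses the (UFO) relation $\raisebox{-10pt}{\includegraphics[width=0.5in]{ufolold.pdf}}=i=-\raisebox{-10pt}{\includegraphics[width=0.5in]{ufolord.pdf}}$ in an essential way, the point being that the two orderings of the ufo's facets contribute opposite signs and this exactly cancels the sign that would otherwise appear. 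I would organize this step by reducing every closed foam that arises to a $\mathbb{Z}[i][a,h]$-scalar via the evaluation functor $\mathcal{F}$, using Definition~\ref{def:quotient category} to test equality of foams against closures, so that the entire check becomes a finite collection of scalar identities in $\mathbb{Z}[i][a,h]$. Since all the pieces --- the Carter--Rieger--Saito classification, the Reidemeister homotopy equivalences, and the local foam relations --- are already in place, the proof is a (lengthy but) routine bookkeeping exercise, and I would present the delicate moves in detail while referring the reader to~\cite{BN1,CC} for the moves that transfer verbatim from the non-universal case.
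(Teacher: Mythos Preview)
Your overall strategy is the same as the paper's: both reduce functoriality to verifying the Carter--Saito movie moves using the homotopy equivalences from Theorem~\ref{invariance}, deferring most of the work to~\cite{CC} and~\cite{BN1}. The discrepancy is in \emph{which} movie moves you flag for fresh verification. The paper's organizing observation is that the only place the present construction differs from~\cite{CC} is in the Reidemeister~1 homotopy equivalences (the new parameter $h$ appears in the maps $f^0,g^0$ of Figures~\ref{fig:R1a_Invariance} and~\ref{fig:R1b_Invariance}, while the R2 and R3 data are unchanged). Hence only the movie moves containing an R1 step need to be redone, and in the numbering used here those are MM7, MM8, MM12, and MM13; the paper works through precisely these four. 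Your proposal, by contrast, asserts that MM6--MM10 can be cited wholesale from~\cite{CC} (this fails for MM7 and MM8, which do involve R1) and locates the delicate R1-with-birth/death cases at MM14 and MM15, which in this numbering do not contain an R1 move. A second ingredient you omit is the ``homotopically isolated objects'' technique of~\cite{CMW}, which the paper (via~\cite{CC}) invokes to pin down the unit for several movie moves without an exhaustive foam computation; this is part of how the ambiguities $\{-1,\pm i\}$, not merely $\pm 1$, are excluded.
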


 \begin{proof} The proof is similar to that of~\cite[Theorem 3]{CC}, with the only difference that it uses the homotopy equivalence constructed in the present Invariance Theorem. To not repeat ourselves, we refer the reader to our previous work. The first step there was to show---much as Bar-Natan did in~\cite{BN1}---that the construction satisfies the functoriality property up to multiplication by a unit $\{ \pm1, \pm i \}.$ The second step of the proof consisted in considering each movie move of Carter and Saito~\cite{CS} and checking that the units $\{-1, \pm i \}$ actually don't appear; for some of the movie moves, this was done using the idea of working with ``homotopically isolated objects", which was borrowed from~\cite{CMW}.
 
Since the differences in the homotopy equivalence constructed in the Invariance Theorem of the generalized construction and the one in the earlier work~\cite{CC} appear only in the Reidemeiter 1 moves, we approach here only the movie moves involving R1 moves---these are MM 7, MM 8, MM 12 and MM 13---and refer the reader to~\cite{CC} for the other ones. 
 
On the other hand, we should mention that when checking a movie move involving a Reidemeister 3 move, one needs to know the map between two particular resolutions of the two sides of R3 move. For that, one needs a deeper approach to R3 moves, and we refer the reader again to~\cite{CC}, noting that the results for this type of Reidemeister moves hold in the generalized theory, as well.

\[\includegraphics[height=1.2in]{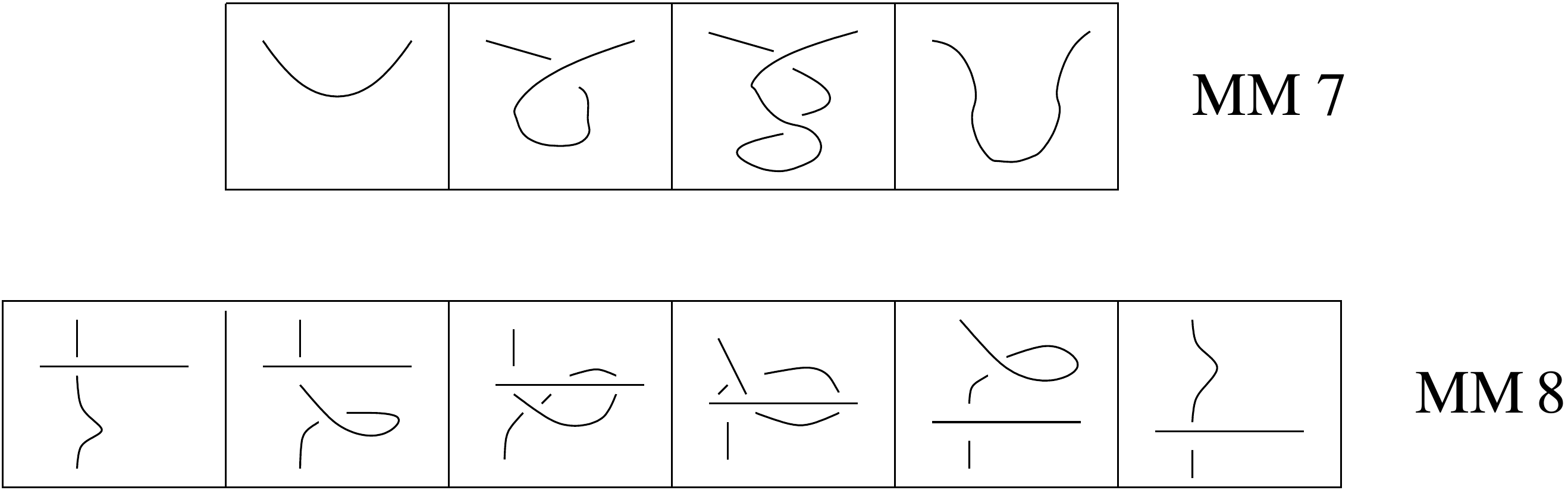}\]

The circular clips MM7 and MM8 have the same initial and final frames and are equivalent to identity, thus we need to show that the associated morphisms at the chain level are homotopy equivalent to the identity morphism.

\textbf{MM7.}\quad
For a negative crossing in the second frame of the clip we have:

\[ \includegraphics[height=1.8in]{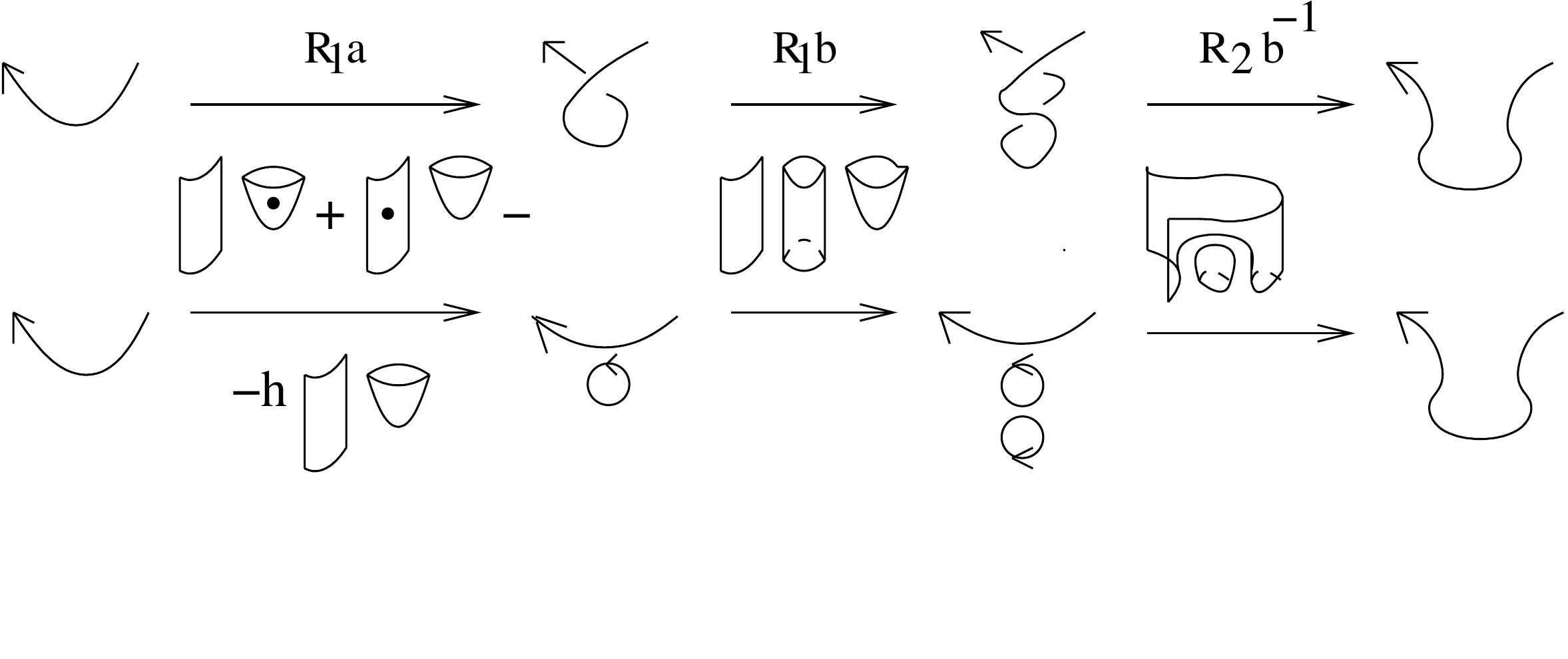}\]
For a positive crossing in the second frame of the same clip we obtain:

\[\includegraphics[height=1.8in]{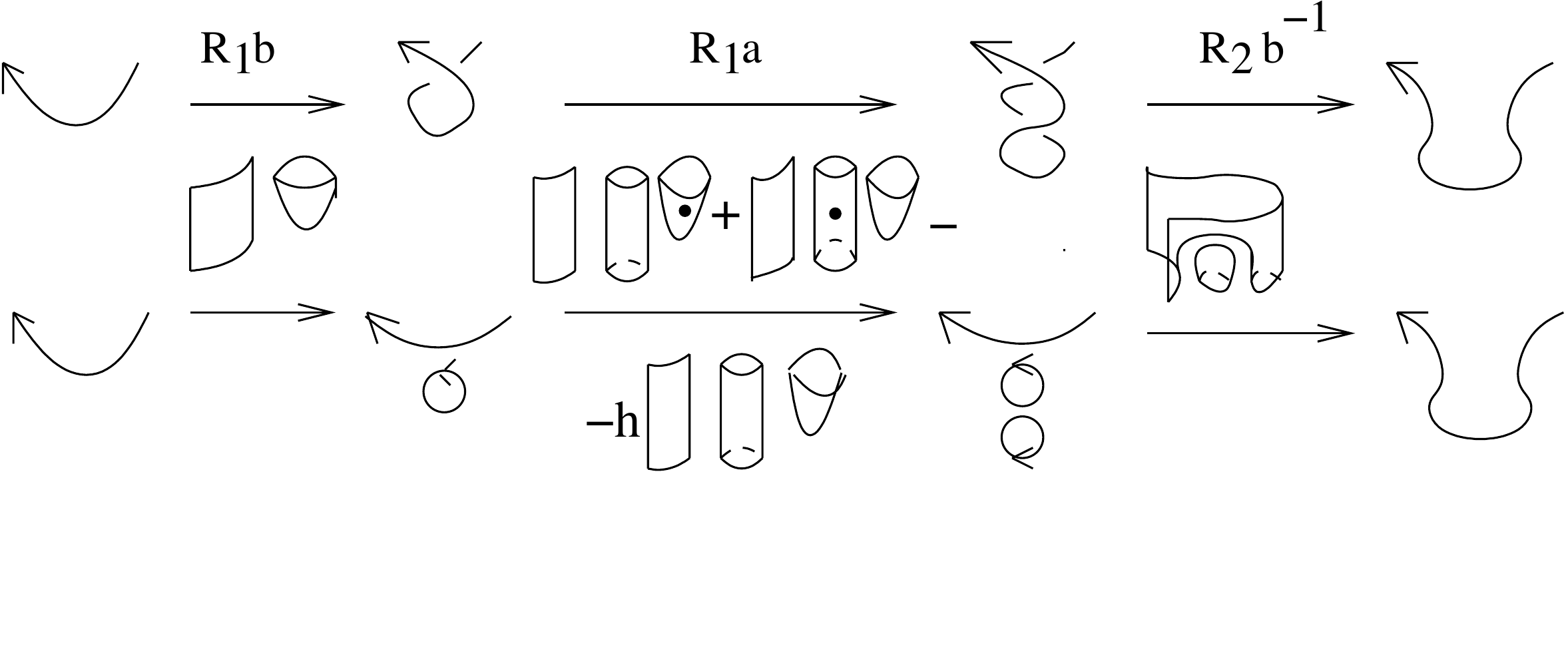}\]
Composing the above cobordisms and applying relation (S) we obtain vertical ``curtains'' in both cases, thus the morphisms are the identity. 

\textbf{MM8.}\quad
Let us look first what happens when the R1 move introduces a negative crossing in the second frame of the clip.	

\[\includegraphics[height=1.5in]{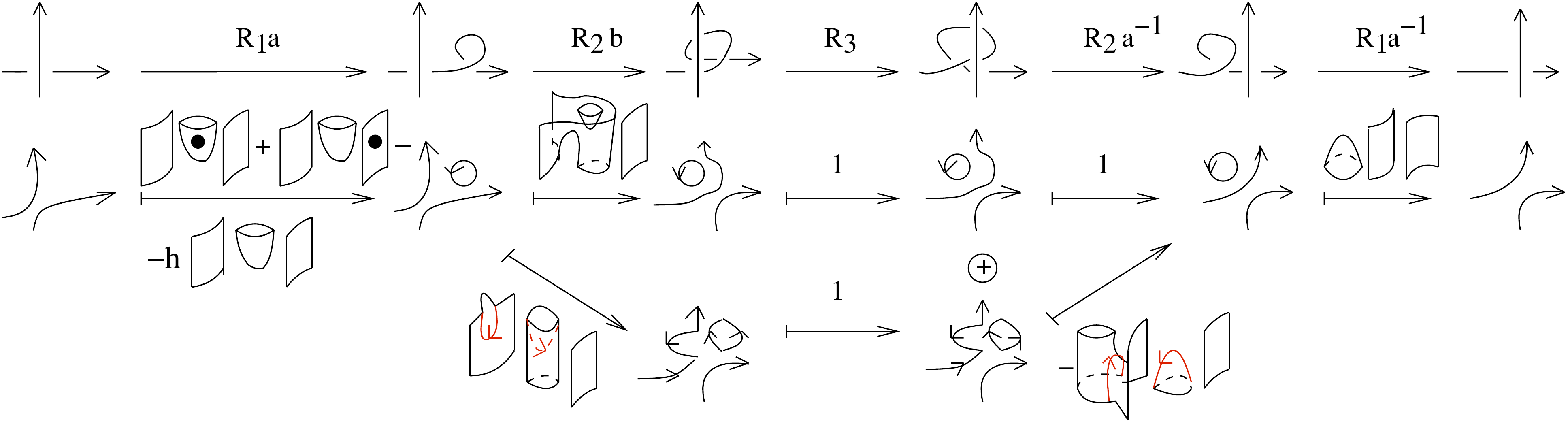}\]	
By composing above and using relation (S) again, we get the zero map in the first row. In the second row we arrive at:

\[\left (\,\raisebox{-10pt}{\includegraphics[height=0.45in]{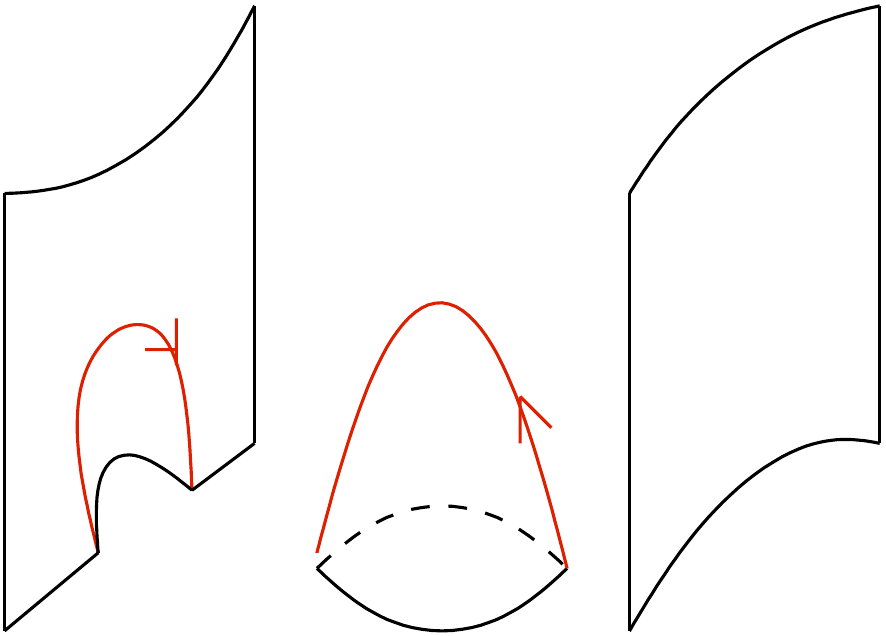}}\,\right) \circ \left(\,-\raisebox{-10pt}{\includegraphics[height=0.45in]{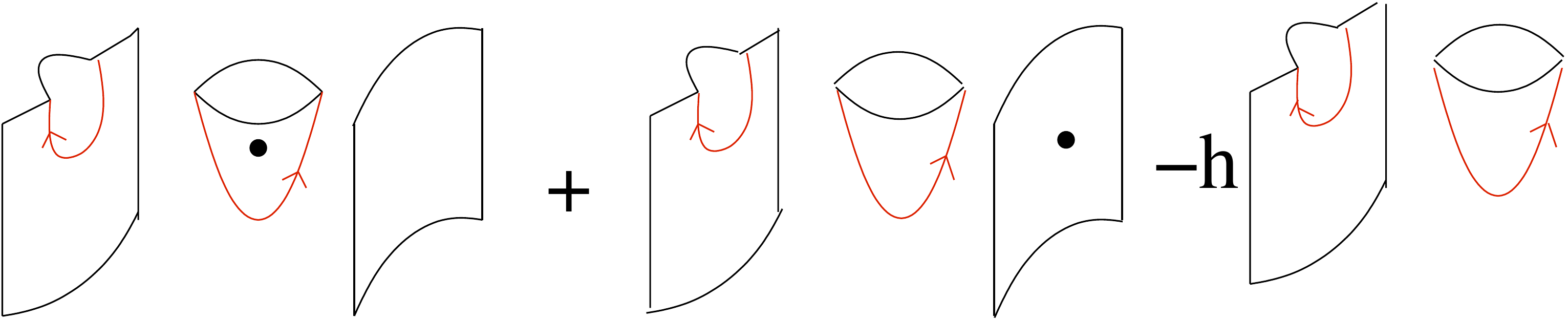}}\,\right) = \raisebox{-10pt}{\includegraphics[height=0.45in]{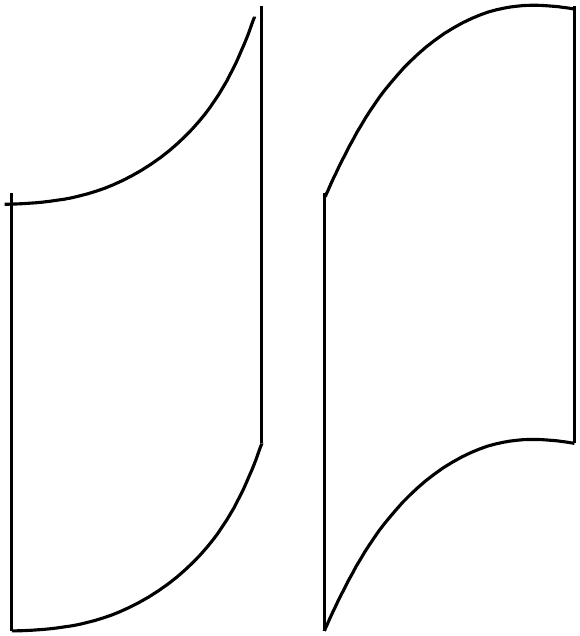}}\,,\]

which is obtained from (S), (UFO) and (SR).
If we consider now the case of a positive crossing introduced by the R1 move, we have:

\[\includegraphics[height=1.5in]{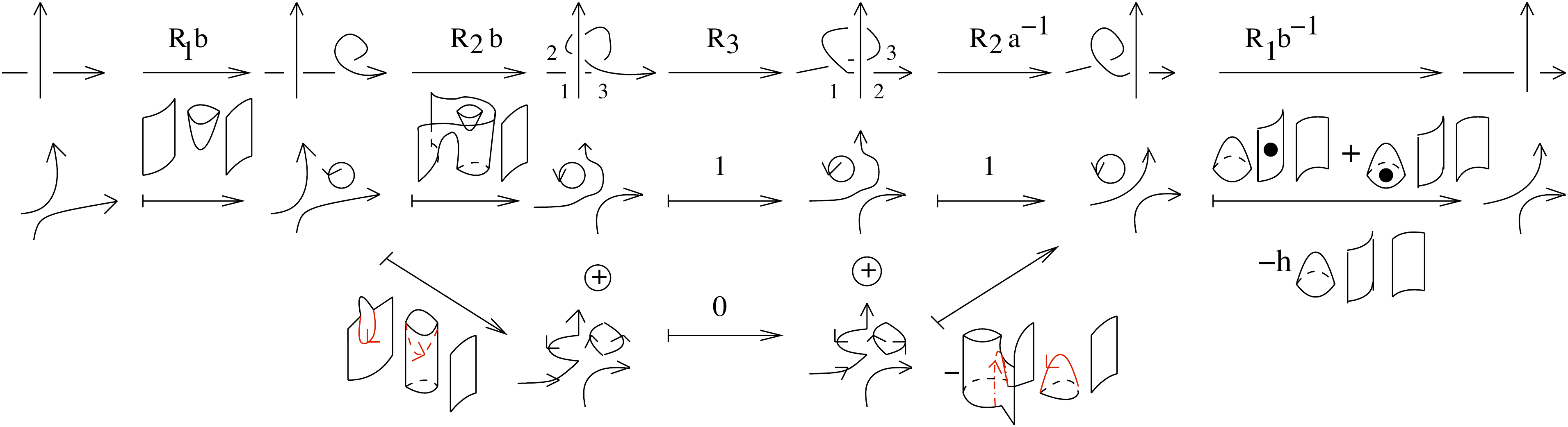}\]	
This time we obtain the zero map in the second row and the identity in the first row:
\[\left (\,\raisebox{-10pt}{\includegraphics[height=0.45in]{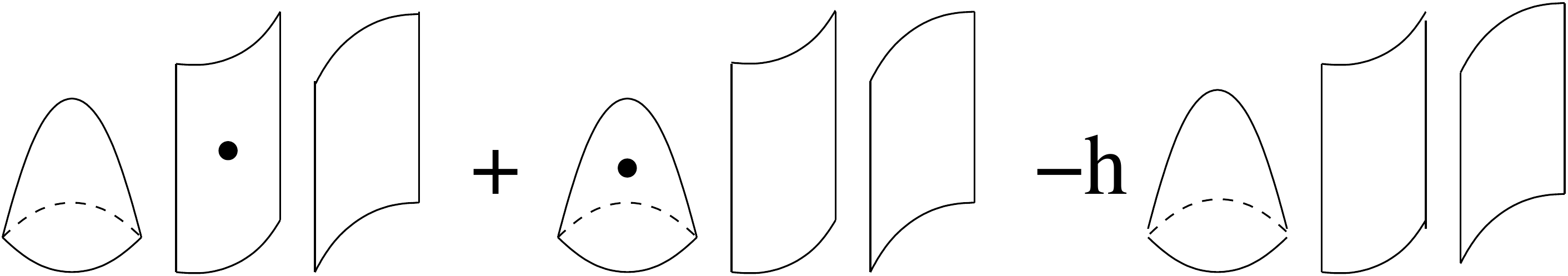}}\,\right) \circ \left(\,\raisebox{-10pt}{\includegraphics[height=0.45in]{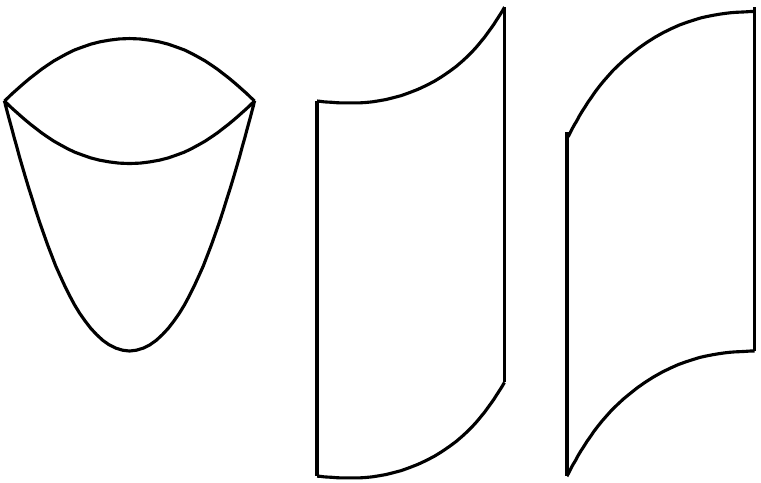}}\,\right) = \raisebox{-10pt}{\includegraphics[height=0.45in]{MM8-maps5}}\]

In both cases, the induced map at the chain level is the identity.

$$\text{MM 12} \quad\raisebox{-28pt}{ \includegraphics[height=1.34in]{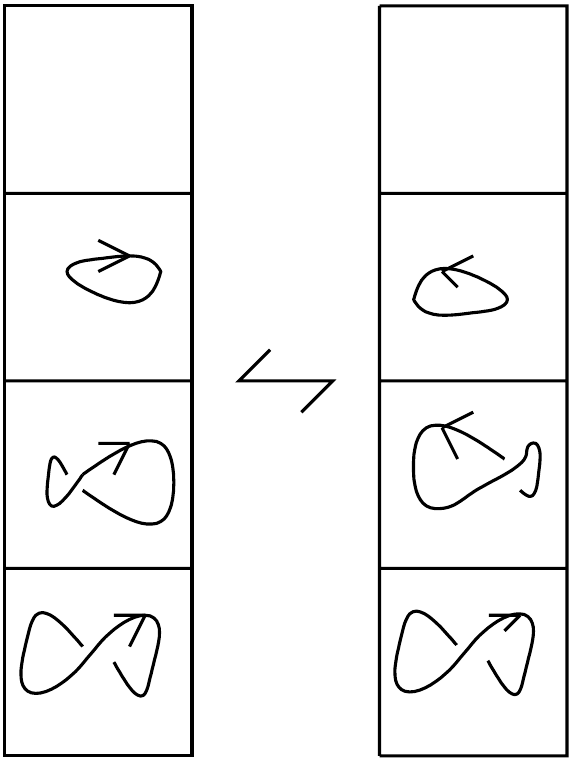}} \hspace{1cm} \raisebox{-28pt}{\includegraphics[height=.95in]{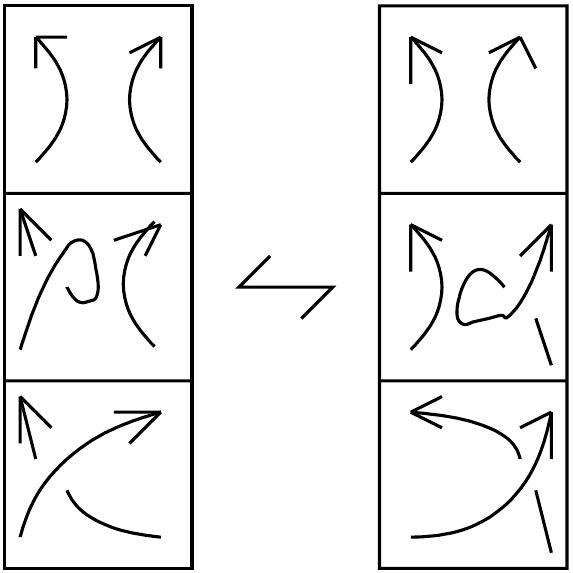}}\quad \text{MM 13}$$

Each pair of clips in MM12 and MM13  should produce the same morphisms when read from top to bottom or from bottom to top. 

\textbf{MM12.}\quad
Going down the left side of MM12 we have
$ \,\, \raisebox{-3pt}{\includegraphics[height=0.3in]{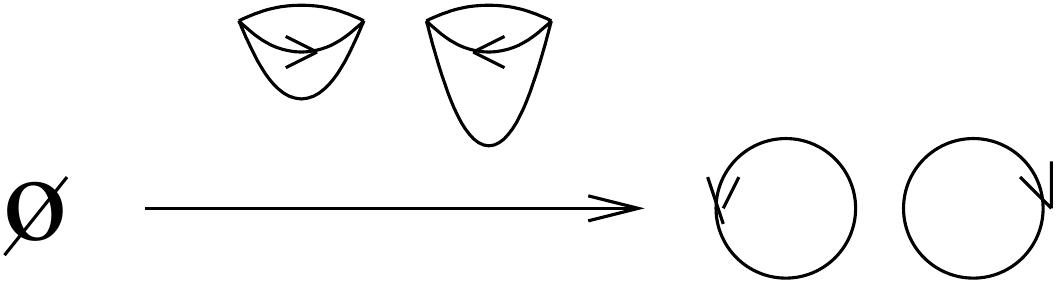}},$
while going down the right side we obtain
$\,\, \raisebox{-3pt}{\includegraphics[height=0.3in]{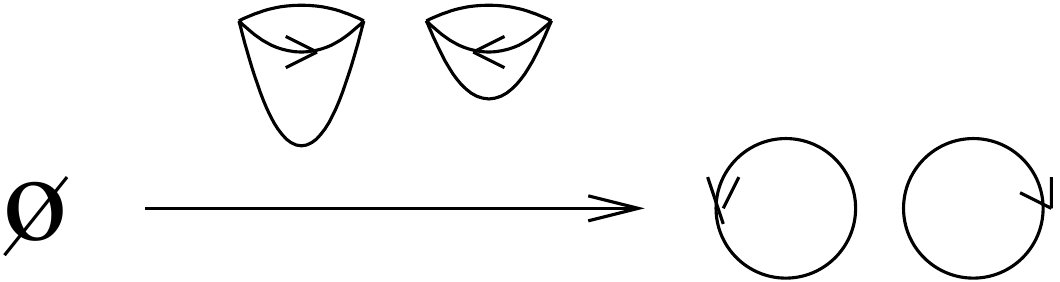}}.$
But these two cobordisms are isotopic. Going up along each side of MM12, the corresponding morphism is $ (\raisebox{-3pt}{\includegraphics[height=0.15in]{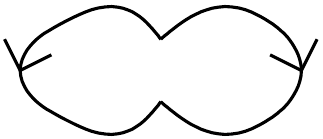}}, 
\raisebox{-3pt}{\includegraphics[height=0.15in]{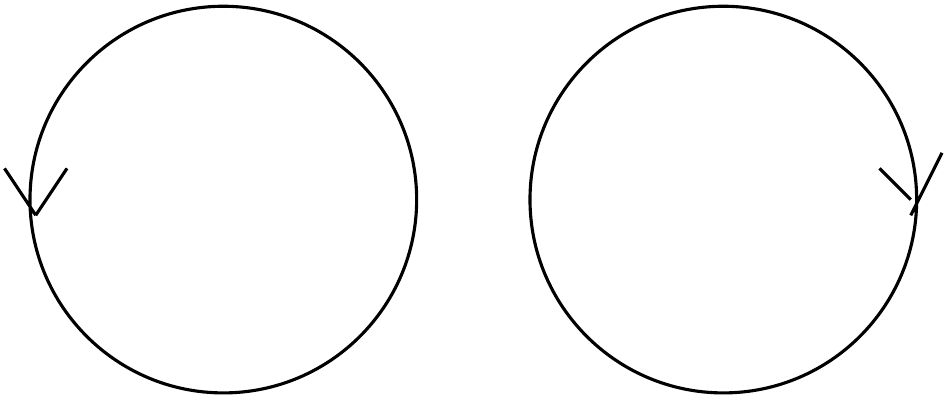}}) \longrightarrow \emptyset$, which is the zero map on the first component, and on the second one is (on the left and right side of the clip, respectively):
$$\raisebox{-5pt}{\includegraphics[height=0.4in]{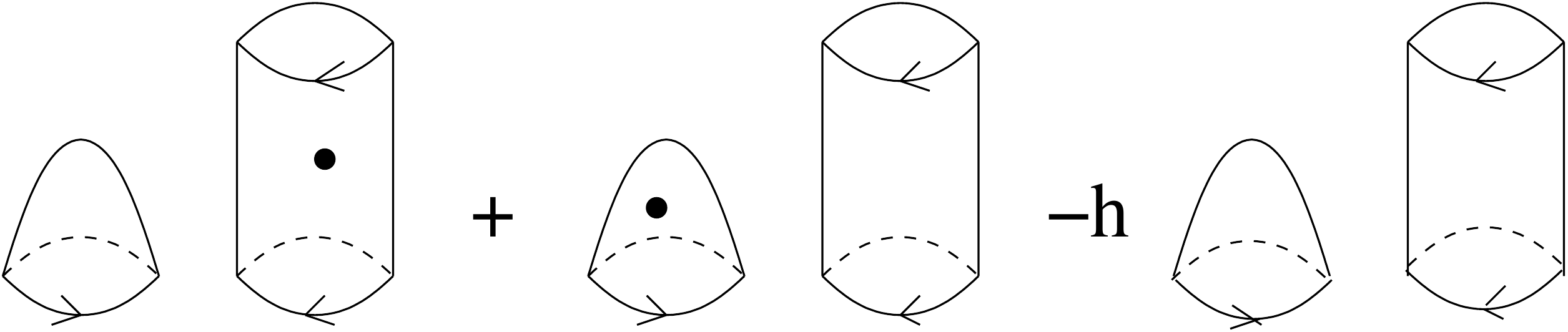}}\quad \text {and} \quad \raisebox{-5pt}{\includegraphics[height=0.4in]{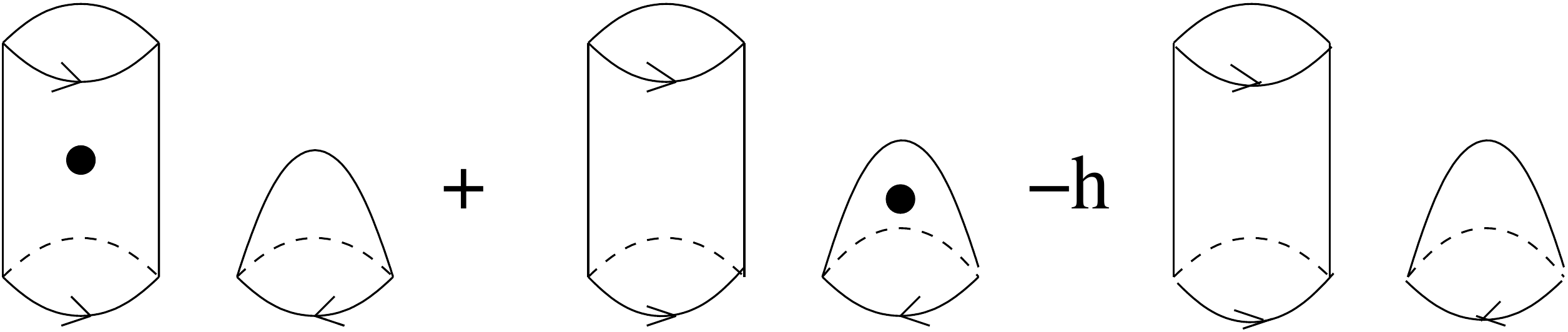}}$$
followed by $\raisebox{-5pt}{\includegraphics[height=0.18in]{caplo}}.$ Both cobordisms are isotopic to  \,\raisebox{-13pt}{\includegraphics[height=0.35in]{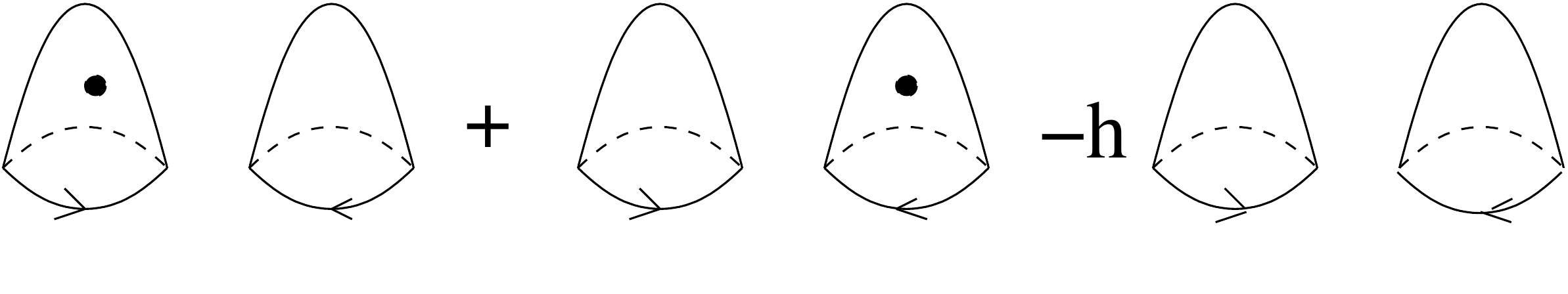}}.

The calculations for the mirror image are similar. Going up along the clip, both maps are a disjoint union of cups on the oriented resolution, and zero on the other one. Going down, we get on both sides morphisms that are isotopic to \raisebox{-8pt}{\includegraphics[height=0.25in]{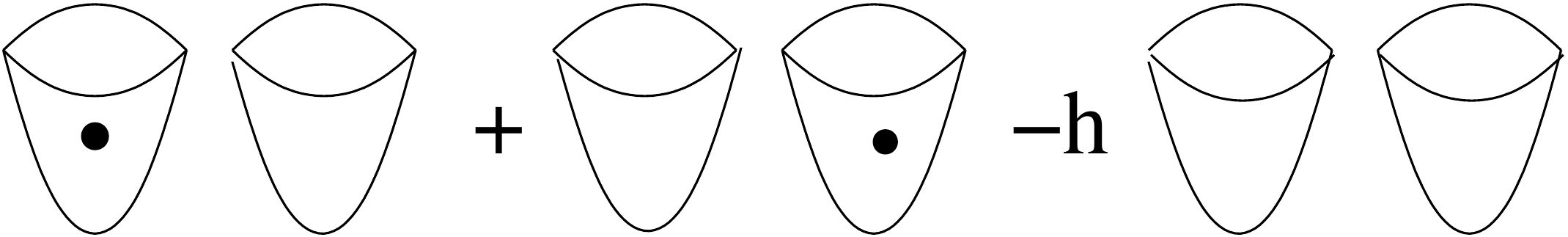}}.

\textbf{MM13.}\quad
Going down along the clip we have on the left and right, respectively:

\[ \raisebox{-8pt}{\includegraphics[height=0.5in]{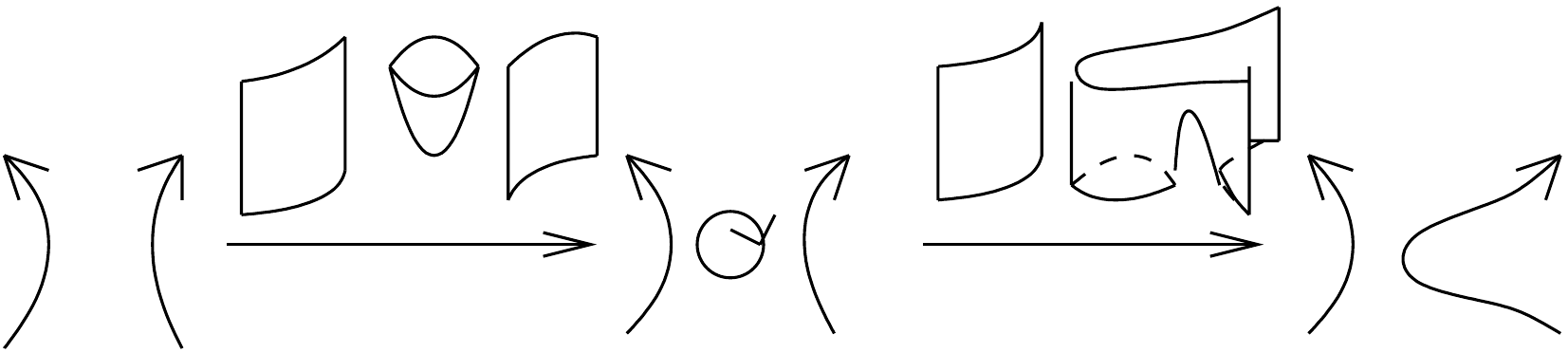}}\quad \text{and}\quad \raisebox{-8pt}{\includegraphics[height=0.5in]{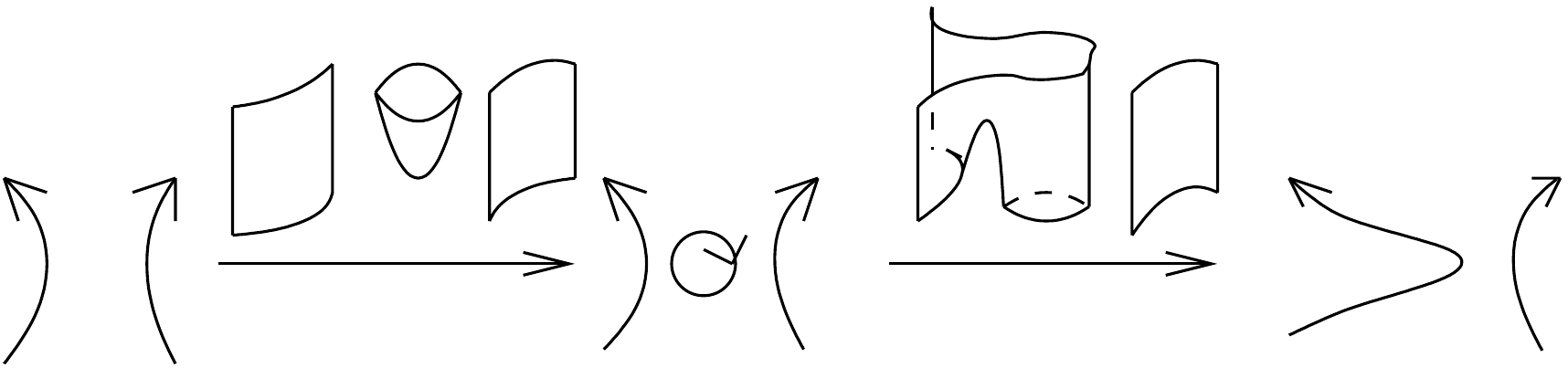}}\]

After composing these cobordisms we obtain on both sides two vertical curtains.
Going up along the clip, both maps are zero on the singular resolution, and on the oriented resolution we have $\raisebox{-15pt}{\includegraphics[height=0.4in]{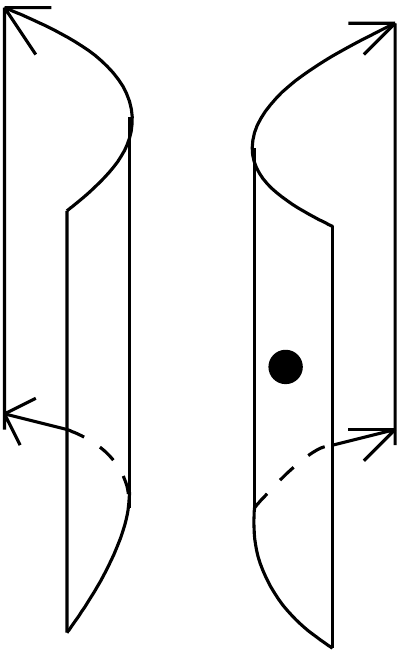}} + \raisebox{-15pt}{\includegraphics[height=0.4in]{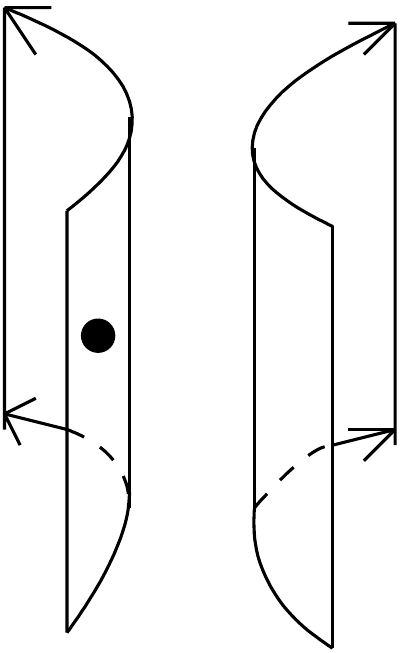}} -h \raisebox{-15pt}{\includegraphics[height=0.4in]{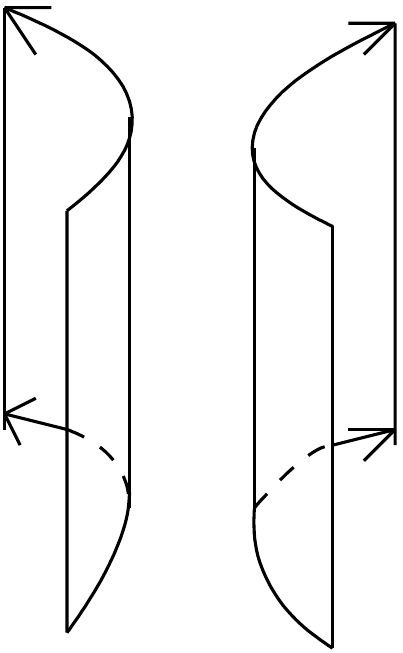}}$ on the left side of the clip, and $\raisebox{-15pt}{\includegraphics[height=0.4in]{MM13-2-new}} + \raisebox{-15pt}{\includegraphics[height=0.4in]{MM13-1-new}} -h \raisebox{-15pt}{\includegraphics[height=0.4in]{MM13-3-new}}$ on the right side. These cobordisms are the same.
		
For the mirror image we obtain similar results, with the difference that the two vertical curtains appear when we read the clip from bottom to top. 
\end{proof}


\section{\textbf{The algebraic invariant}}\label{sec:webhom}

\begin{definition}\label{def:web homology}
Let $\Gamma_0$ be an arbitrary web with boundary $B$ (if $B$ is empty, $\Gamma_0 = \emptyset$). We define a functor $ \mathcal{F}_{\Gamma_0} \co \textit{Foams}_{/\ell}(B) \rightarrow \mathbb{Z}[i][a,h]$-Mod as follows: 
 \begin{itemize}
 \item if $\Gamma \in \textit{Foams}_{/\ell}(B),\,\text{define} \,
 \mathcal{F}_{\Gamma_0}(\Gamma) = \Hom_{\textit{Foam}_{/\ell}(B)}(\Gamma_0, \Gamma)$
 \item if $S \in \Hom_{\textit{Foam}_{/\ell}(B)}(\Gamma', \Gamma'')$, define $ \mathcal{F}_{\Gamma_0}(S)$ as the $\mathbb{Z}[i][a, h]$-linear map 
 \[\Hom_{\textit{Foam}_{/l}(B)}(\Gamma_0, \Gamma') \rightarrow \Hom_{\textit{Foam}_{/\ell}(B)}(\Gamma_0, \Gamma'') \, \text{given\ by\ composition}.\] \end{itemize}
\end{definition}

Note that $ \mathcal{F}_{\Gamma_0}(\Gamma_1 \cup \Gamma_2) \cong \mathcal{F}_{\Gamma_0}(\Gamma_1) \otimes_{\mathbb{Z}[i][a,h]} \mathcal{F}_{\Gamma_0}(\Gamma_2)$ for any disjoint union of webs $\Gamma_1$ and $\Gamma_2.$

\begin{proposition}\label{prop:categorified web relations} The functor $\mathcal{F}$ mimics the web skein relations of Figure~\ref{fig:web skein relations}. \newline
Specifically, there are canonical isomorphisms of graded abelian groups:
\begin{enumerate}
\item $\xymatrix@R=2mm{
\mathcal{F}_{\emptyset}(\raisebox{-3pt}{\includegraphics[height=0.18in]{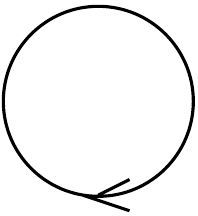}}) \cong \mathcal{A} \cong 
\mathcal{F}_{\emptyset}(\raisebox{-3pt}{\includegraphics[height=0.16in]{circle2sv.pdf}})}$
 \item $\mathcal{F}_{\Gamma_0}(\Gamma \cup  \raisebox{-3pt}{\includegraphics[height=0.18in]{unknot-clockwise.pdf}}) \cong \mathcal{F}_{\Gamma_0}(\Gamma) \otimes_{\mathbb{Z}[i][a,h]} \mathcal{A} \cong\mathcal{F}_{\Gamma_0}(\Gamma \cup  \raisebox{-3pt}{\includegraphics[height=0.16in]{circle2sv.pdf}})$
\item $\mathcal{F}_{\Gamma_0}(\raisebox{-3pt}{\includegraphics[height=0.12in]{2vertweb.pdf}}) \cong \mathcal{F}_{\Gamma_0}(\raisebox{-3pt}{\includegraphics[height=0.12in]{arcro.pdf}}) \quad \text{and} \quad
\mathcal{F}_{\Gamma_0}(\raisebox{-3pt}{\includegraphics[height=0.12in]{2vertwebleft.pdf}} ) \cong \mathcal{F}_{\Gamma_0}(\raisebox{-3pt}{\includegraphics[height=0.12in]{arclo.pdf}}).$
 \end{enumerate}
 In particular, $\mathcal{F}_{\emptyset}(\Gamma)$ is a free $\mathbb{Z}[i][a,h]$-module of graded rank $\brak{\Gamma}.$
\end{proposition}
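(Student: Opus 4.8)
The plan is to construct, for each of the three skein relations, explicit foams realizing the claimed isomorphisms, and to verify that they are mutually inverse by reducing the relevant compositions to the local relations $\ell$ and their consequences (SR), (RSC), (CI), (CN) established in Lemmas~\ref{handy relations} and~\ref{nice relations}. All maps will automatically be $\mathbb{Z}[i][a,h]$-linear because $\mathcal{F}_{\Gamma_0}$ sends foams to composition maps, and one checks degrees using the grading formula $\deg(S) = -\chi(S) + \tfrac12|B| + 2d$; the grading shifts built into the statement of the web skein relations (the $(q+q^{-1})$ factor, i.e.\ a shift by $\{1\}$ on one summand and $\{-1\}$ on the other) are precisely what is needed to make the isomorphisms degree-preserving.

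First I would treat (1). The key point is that $\mathcal{F}_{\emptyset}(\text{clockwise circle}) = \Hom_{\textit{Foam}_{/\ell}(\emptyset)}(\emptyset, \bigcirc)$, and by the neck-cutting / sphere evaluation relations of $\ell$ one shows this $\Hom$-group is free of rank two on the undotted cup and the once-dotted cup, with the pairing into $\mathbb{Z}[i][a,h]$ given by capping off; this is exactly the content of the fact (already recorded in Section~\ref{sec:TQFT}) that $\mathsf{F}$ restricted to circles recovers the Frobenius algebra $\mathcal{A}$. Concretely the isomorphism sends the undotted cup to $1$ and the once-dotted cup to $X$, and injectivity/surjectivity follow from relation (S) together with the (2D) relation to absorb extra dots. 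For the two-vertex circle $\raisebox{-3pt}{\includegraphics[height=0.16in]{circle2sv.pdf}}$ one uses Corollary~\ref{removing singular points in pairs}: the pair of same-type vertices can be removed, producing a foam isomorphism between $\raisebox{-3pt}{\includegraphics[height=0.16in]{circle2sv.pdf}}$ and the plain circle, and hence an isomorphism of the corresponding $\Hom$-groups compatible with $\mathcal{A}$.

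Next, (2) follows from (1) by the monoidality remark immediately preceding the Proposition: $\mathcal{F}_{\Gamma_0}(\Gamma \cup \bigcirc) \cong \mathcal{F}_{\Gamma_0}(\Gamma) \otimes_{\mathbb{Z}[i][a,h]} \mathcal{F}_{\Gamma_0}(\bigcirc)$, and $\mathcal{F}_{\Gamma_0}(\bigcirc) \cong \mathcal{A}$ is proved just as in (1) — the argument there is local to the circle and does not use that the ambient web is empty. For (3), the two-vertex web with the preferred edge in the middle (resp.\ the "left" version $\raisebox{-3pt}{\includegraphics[height=0.12in]{2vertwebleft.pdf}}$) is related to the plain arc by exactly the foam isomorphisms of Corollary~\ref{removing singular points in pairs} (removing/creating a pair of vertices of the same type); applying $\mathcal{F}_{\Gamma_0}$ to these foam isomorphisms gives the asserted isomorphisms of $\Hom$-groups. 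Finally, the "in particular" clause: iterating (2) and (3) and Corollary~\ref{cor:Isomorphisms 1 and 2} one can reduce any closed web $\Gamma$ to a disjoint union of plain circles, picking up one factor of $\mathcal{A} \cong \mathbb{Z}[i][a,h]^2$ for each circle; tracking the grading shifts shows the graded rank of $\mathcal{F}_{\emptyset}(\Gamma)$ equals $(q+q^{-1})^{\#\text{circles}}$ evaluated via the web skein relations, which is precisely $\brak{\Gamma}$, and freeness is preserved at each reduction step.

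The main obstacle I expect is not any single hard computation but rather \emph{well-definedness and naturality}: one must check that the foam isomorphisms supplied by Corollaries~\ref{removing singular points in pairs} and~\ref{cor:Isomorphisms 1 and 2} are genuinely two-sided inverses in $\textit{Foams}_{/\ell}$ (not merely up to a unit in $\{\pm 1, \pm i\}$), since the whole point of the Gaussian-integer coefficients is to pin down these units — this is where relations (SR) and the signs in (UFO) do the work. Equally, one must verify that the reduction of $\Gamma$ to circles is independent of the order in which vertices are removed, so that the graded rank is well defined; this follows from the coherence already implicit in the consistency of the local relations $\ell$ (part (2) of the Remark after the genus-reduction formula), but it should be stated carefully. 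Since all of these facts are asserted earlier in the excerpt to hold "exactly as in~\cite{CC}", the proof here can legitimately be organized as: invoke (1) directly, deduce (2)–(3) formally, and deduce the rank statement by induction on the number of vertices of $\Gamma$.
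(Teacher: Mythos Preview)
The paper states this proposition without proof, implicitly relying on the detailed treatment in~\cite{CC} (as it does throughout Section~\ref{sec:webhom}); so there is no in-paper argument to compare against, and your proposal is in effect supplying what the paper omits. Your outline is correct and is exactly the natural one: part~(1) via the cup/dotted-cup basis and relations (S), (2D), (SF); part~(3) via the foam isomorphisms of Corollary~\ref{removing singular points in pairs}; part~(2) via the tensor decomposition noted just before the proposition; and the graded-rank statement by induction on the number of bivalent vertices. This matches the spirit of how~\cite{CC} handles the analogous result.

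One small point worth tightening: the tensor identity you invoke for~(2), as stated in the paper, reads $\mathcal{F}_{\Gamma_0}(\Gamma_1\cup\Gamma_2)\cong\mathcal{F}_{\Gamma_0}(\Gamma_1)\otimes\mathcal{F}_{\Gamma_0}(\Gamma_2)$, but when $\Gamma_2$ is a closed circle it has empty boundary while $\Gamma_0$ has boundary $B$, so the second factor should really be $\mathcal{F}_{\emptyset}(\bigcirc)$ rather than $\mathcal{F}_{\Gamma_0}(\bigcirc)$. This is almost certainly what is intended (and what your parenthetical ``the argument there is local to the circle'' gestures at), but you should say so explicitly rather than quoting the displayed isomorphism verbatim. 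Apart from that, your discussion of the potential obstacles (unit ambiguity, order-independence of vertex removal) is on target and correctly resolved by the consistency remark following the local relations.
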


\begin{corollary}
The functor $\mathcal{F}_{\emptyset}$ is the same as the functor $\mathsf{F}$ defined in Section~\ref{sec:TQFT}.\end{corollary}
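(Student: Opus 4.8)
The plan is to verify that the functor $\mathcal{F}_{\emptyset}$ and the TQFT functor $\mathsf{F}$ agree on objects and on the generating morphisms of the relevant subcategory of $\textit{Foams}_{/\ell}(\emptyset)$, and then invoke functoriality to conclude they agree everywhere. On objects, Proposition~\ref{prop:categorified web relations}(1)--(2) already gives that $\mathcal{F}_{\emptyset}$ sends a disjoint union of $k$ oriented circles to $\mathcal{A}^{\otimes k}$ and the empty web to $\mathbb{Z}[i][a,h]$, which is exactly the assignment defining $\mathsf{F}$; I would fix, once and for all, the isomorphism $\mathcal{F}_{\emptyset}(\raisebox{-3pt}{\includegraphics[height=0.16in]{circle2sv.pdf}}) \cong \mathcal{A}$ on a generating circle to be the one sending the identity foam (or the elementary cup/cap-generated basis foams) to $1$ and $X \in \mathcal{A}$, matching the normalization used for $\mathsf{F}$, and then extend to multiple circles by the tensor-product compatibility noted after Definition~\ref{def:web homology}.

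Next I would check the generating morphisms. The category in question is generated (as discussed in Section~\ref{sec:TQFT}) by the $(1+1)$-cobordism generators---cup, cap, pair-of-pants, upside-down pants, dotted annulus---together with the singular-circle annuli. For the ordinary $(1+1)$ generators, $\mathcal{F}_{\emptyset}$ of such a foam $S$ is, by definition, the map $\Hom(\emptyset,\Gamma') \to \Hom(\emptyset,\Gamma'')$ given by composition with $S$; under the fixed identifications of the Hom-spaces with $\mathcal{A}^{\otimes \bullet}$, this composition is computed by closing off with test foams and evaluating via $\mathcal{F}$, i.e.\ via the local relations $\ell$. I would verify case by case that the resulting linear maps are exactly $\iota, \epsilon, m, \Delta$ and multiplication by $X$: for instance, the pair-of-pants composed against the generators of $\mathcal{F}_{\emptyset}$ of the two incoming circles produces $m$ precisely because relations (2D) and (S) force $X\otimes X \mapsto hX+a$, etc.---these are the same computations already used to check in the Remark that $\mathsf{F}$ satisfies $\ell$. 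For the singular-circle annuli I would use Lemma~\ref{nice relations} (relation (RSC)) and the explicit maps written out in Section~\ref{sec:TQFT}, checking that $\mathcal{F}_{\emptyset}$ of such an annulus sends $1 \mapsto \mp i$ and $X \mapsto \mp i(h-X)$, again matching the definition of the extended $\mathsf{F}$.

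Finally, since every foam between disjoint unions of clockwise/counterclockwise circles decomposes (as stated in Section~\ref{sec:TQFT}) into these generators, and since both $\mathcal{F}_{\emptyset}$ and $\mathsf{F}$ are functors that descend to $\textit{Foams}_{/\ell}(\emptyset)$, agreement on objects and generators forces $\mathcal{F}_{\emptyset} = \mathsf{F}$ as functors. I expect the main obstacle to be purely bookkeeping: one must confirm that the identification $\Hom_{\textit{Foam}_{/\ell}(\emptyset)}(\emptyset,\Gamma) \cong \mathcal{A}^{\otimes k}$ supplied by Proposition~\ref{prop:categorified web relations} is natural enough that composition with a generating foam is literally computed by the evaluation functor $\mathcal{F}$ on closed foams---i.e.\ that the ``close-off-and-evaluate'' description of $\mathcal{F}_{\emptyset}(S)$ is consistent with the chosen bases---and that the sign conventions for the singular-circle generators line up with the ordering-of-facets conventions fixed earlier. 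Once this compatibility is in place, the remaining verifications are exactly the relation checks already cited from~\cite{CC}.
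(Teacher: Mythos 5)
Your proposal is correct and takes essentially the route the paper intends: the corollary is stated without an explicit proof, being the immediate consequence of Proposition~\ref{prop:categorified web relations} (agreement on objects, with the cup and dotted cup giving the identification of $\Hom_{\textit{Foams}_{/\ell}(\emptyset)}(\emptyset,\bigcirc)$ with $\mathcal{A}$) together with the facts recorded in Section~\ref{sec:relations l} and Section~\ref{sec:TQFT} that $\mathsf{F}$ satisfies the relations $\ell$ and that every foam between oriented circles decomposes into the listed generators, on which the two functors are checked to agree. Your case-by-case verification on generators and the bookkeeping caveat about the choice of basis and the sign conventions for singular-circle annuli are exactly the checks the paper delegates to the Remark and to~\cite{CC}.
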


The functor $\mathcal{F}$ extends in a straightforward manner to the category \textit{Kof}, and $\mathcal{F}([T])$ is a complex of graded free $\mathbb{Z}[i][a, h]$-modules which is an invariant of the tangle $T,$ up to cochain homotopy. Moreover, $\mathcal{F}$ is degree-preserving thus the homology $H(\mathcal{F}([T]))$ is a bigraded invariant of $T,$ denoted by $\mathcal{H}(T) = \oplus_{i,j} \mathcal{H}^{i,j}(T).$ If $T$ a link diagram $L,$ the graded Euler characteristic of $\mathcal{F}([L])$ equals the quantum $\mf{sl}(2)$-polynomial of $L.$ In other words, 
\[P_2(L) = \sum_{i,j \in \mathbb{Z}}(-1)^i q^j \,\text{rk} (\mathcal{H}^{i,j}(L)).\]

\subsection{The invariant of a surface-knot}

Given a link cobordism $C \subset \mathbb{R}^2 \times [0,1]$ between links $L_0$ and $L_1,$ there is an induced graded map $\mathcal{L}_C \co \mathcal{H}(L_0) \to \mathcal{H}(L_1)$ of degree $-\chi(C),$ well-defined under ambient isotopy of $C$ relative to its boundary. 

A \textit{surface-knot} or \textit{surface-link} $S$ is a closed surface embedded in $\mathbb{R}^4$ locally flatly, and it can be regarded as a link cobordism between empty links. The induced map $\mathcal{L}_S \co \mathcal{H}(\emptyset) \to \mathcal{H}(\emptyset)$ is a ring homomorphism $\mathbb{Z}[i][a,h] \to \mathbb{Z}[i][a,h],$ giving rise to an invariant of the surface-link $S,$ defined as $\text{Inv}(S) = \mathcal{L}_S(1) \in \mathbb{Z}[i][a,h].$ In the remaining part of this section we show that the invariant of any surface-link is determined by its genus. In doing this, we follow Tanaka's~\cite{Ta} approach to the surface-knot invariant derived from Bar-Natan's theory~\cite{BN1}. 

A surface-knot in called \textit{trivial} or \textit{unknotted} if it's obtained from some standard surfaces in $\mathbb{R}^4$ by taking a connected sum. By the results of Hosokawa and Kawauchi~\cite{HK}, any surface-knot becomes trivial by attaching a finite number of 1-handles (the \linebreak minimal number of such 1-handles is called the \textit{unknotting number}). Moreover, it is known that any 1-handle on a surface-knot is ribbon-move equivalent to a trivial 1-handle.

Consider the two movies shown in Figure~\ref{fig:ribbon-move}. From our construction one can observe that the maps of formal complexes $[\,\raisebox{-5pt}{\includegraphics[height=0.2in]{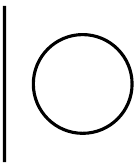}}\,] \to [\,\raisebox{-5pt}{\includegraphics[height=0.2in]{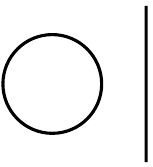}}\,]$, in particular, the corresponding homomorphisms $\mathcal{H}(\,\raisebox{-5pt}{\includegraphics[height=0.2in]{ribbon-move-left.pdf}}\,) \to \mathcal{H}(\,\raisebox{-5pt}{\includegraphics[height=0.2in]{ribbon-move-right.pdf}}\,)$ are the same for these movies. 
\begin{figure}[ht!]\begin{center}
\includegraphics[height=0.5in]{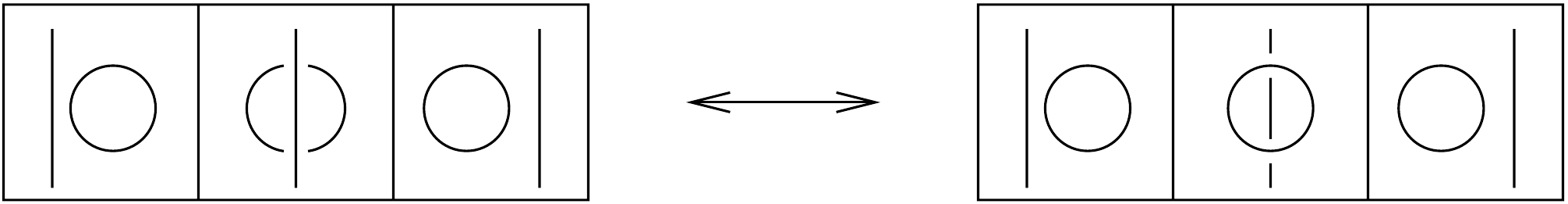}\end{center}
\caption{}\label{fig:ribbon-move}
\end{figure}

By the work of Carter, Saito and Satoh~\cite{CSS}, it is implied that two surface-knots which are related by  ribbon-moves have the same invariant obtained from our theory.

The following lemma is a direct consequences of the local relations $\ell$ of Section~\ref{sec:relations l}. 
\begin{lemma}\label{lemma:inv-trivial}
If the surface-knot $S$ of genus $g$ is trivial then

$\text{Inv}(S) = 
\begin{cases} 0, \hspace{.15in} \text{if} \hspace{.1in} g = 2k, \,\, k\geq 0 \\
2(h^2 + 4a)^k, \hspace{.15in} \text{if} \hspace{.1in} g = 2k+1,\,\, k\geq 0. \end{cases}$
\end{lemma}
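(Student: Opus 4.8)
The plan is to reduce the evaluation of $\text{Inv}(S)$ for a trivial surface-knot of genus $g$ to a purely local computation with closed foams, using the fact that a trivial surface-knot bounds and can be built from standard pieces. First I would observe that since $S$ is trivial of genus $g$, it is isotopic (in $\mathbb{R}^4$, hence through link cobordisms of empty links) to a standard closed surface of genus $g$, which in turn is a connected sum of $g$ standard tori with a single standard $2$-sphere. Because the invariant $\text{Inv}(S) = \mathcal{L}_S(1)$ is computed by applying the functor $\mathcal{F}$ to the formal complex of the corresponding movie and evaluating on the generator $1 \in \mathcal{H}(\emptyset) = \mathbb{Z}[i][a,h]$, and because $\mathcal{F}_\emptyset$ coincides with the TQFT functor $\mathsf{F}$ (by the Corollary following Proposition~\ref{prop:categorified web relations}), the map induced by such a closed surface is multiplication by the scalar $\mathcal{F}(S)$, the evaluation of $S$ viewed as a closed foam from $\emptyset$ to $\emptyset$.

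Next I would carry out the genus computation using the relations of Section~\ref{sec:relations l}. The key input is the genus reduction formula derived from (SF), namely that a handle attached to a facet equals $2\,(\text{one dot}) - h\,(\text{no dots})$, together with the sphere relations (S): an undotted sphere evaluates to $0$ and a once-dotted sphere evaluates to $1$. Applying (2D) to reduce powers of dots, one gets that a genus-$g$ closed surface (with no singular circles, since the standard surfaces here are ordinary $(1+1)$-cobordisms) evaluates to $\varepsilon(X^g)$ where $\varepsilon$ is iterated via the Frobenius structure; concretely, a genus-$g$ surface equals the closure of multiplication by $(\text{the handle element})^g$, and the handle element of $\mathcal{A}$ is $\Delta(1)$ multiplied out, which is $2X - h$. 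Thus $\text{Inv}(S) = \epsilon\big((2X-h)^g \cdot 1\big)$ after accounting for the cup at the bottom; more precisely, the closed genus-$g$ surface evaluates to the trace $\epsilon$ applied to $(2X-h)^g$ times the unit, i.e. $\epsilon\big((2X - h)^{g}\big)$ reading the cup as $\iota$ and the cap as $\epsilon$. One then computes $(2X-h)^2 = 4X^2 - 4hX + h^2 = 4(hX+a) - 4hX + h^2 = h^2 + 4a$ in $\mathcal{A}$, so $(2X-h)^{2k} = (h^2+4a)^k$ and $(2X-h)^{2k+1} = (h^2+4a)^k(2X-h)$; applying $\epsilon$ (with $\epsilon(1)=0$, $\epsilon(X)=1$) gives $0$ in the even case and $(h^2+4a)^k \cdot 2$ in the odd case, which is exactly the claimed formula.

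The step I expect to be the main obstacle is the bookkeeping that justifies replacing the arbitrary trivial surface-knot by the standard genus-$g$ surface and that the functor $\mathcal{F}$ sends that standard surface to multiplication by the scalar above — that is, confirming that the movie presentation of a connected sum of tori and a sphere really does yield the composition of the elementary foam maps (cups, caps, handle attachments) whose composite is $\epsilon \circ (\text{mult by }(2X-h)^g) \circ \iota$, with no stray signs or grading-shift discrepancies. Since $\text{Inv}$ is defined via the invariant $\mathcal{H}(\emptyset)$ and the induced map $\mathcal{L}_S$ is well-defined up to ambient isotopy relative to boundary (stated earlier), and since "trivial" means precisely connected-sum of standard surfaces, this is mostly a matter of carefully tracking the TQFT computation; the actual algebra in $\mathcal{A}$ is the short calculation $(2X-h)^2 = h^2+4a$ done above. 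I would therefore present the proof as: (i) reduce to the standard surface via triviality; (ii) identify $\text{Inv}(S)$ with $\epsilon\big((2X-h)^g\big)$ using the genus reduction formula and the sphere relations; (iii) finish with the displayed computation of powers of $2X-h$ in $\mathcal{A}$.
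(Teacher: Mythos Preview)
Your proposal is correct and takes essentially the same approach as the paper: the paper simply states that the lemma is a direct consequence of the local relations $\ell$ (in particular the genus reduction formula and the sphere relations), and your write-up fills in exactly those details by identifying the handle element as $2X-h$ and computing $(2X-h)^2 = h^2+4a$ in $\mathcal{A}$. The bookkeeping you flag as a potential obstacle is not an issue here, since a trivial surface-knot is by definition isotopic to a standard genus-$g$ surface and the functoriality statement guarantees $\mathcal{L}_S$ depends only on the isotopy class.
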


Any surface-knot $S$ can be regarded as the composition between $S$ with one puncture and the ``cap'' cobordism. In particular, the surface $S$ with one puncture can be considered as a cobordism from the empty link to the trivial knot.  We adopt some notations from~\cite{Ta} and write
\[
\mathcal{L}_S = \epsilon \circ\mathcal{L}_S^{(\emptyset  \to \bigcirc)},\hspace{.15in}
\text{where} \hspace{.1in} \mathcal{L}_S^{(\emptyset  \to \bigcirc)} \co \mathcal{H} (\emptyset) \to \mathcal{H} (\bigcirc).\]
Here $\bigcirc$ is the trivial knot diagram. Similarly, it also holds the following
\[
\mathcal{L}_S = \mathcal{L}_S^{(\bigcirc \to \emptyset)} \circ \iota, \hspace{.15in} \text{where} \hspace{.1in} \mathcal{L}_S^{(\bigcirc  \to \emptyset)}: \mathcal{H} (\bigcirc) \to \mathcal{H} (\emptyset).
\]
For the connected sum $S_1 \sharp  S_2$ of two surface-knots $S_1$ and $S_2$ we write
\[
\mathcal{L}_{S_1 \sharp S_2} = \mathcal{L}_{S_2}^{(\bigcirc \to \emptyset)} \circ \mathcal{L}_{S_1}^{(\emptyset \to \bigcirc)}.
\]

\begin{lemma}\label{lemma:inv-trivial-evengenus}
If the surface-knot $S$ of genus $2k$ (where $k\geq 0$) is trivial then 
\[ \mathcal{L}_S^{(\bigcirc \to \emptyset)} (X) = (h^2 + 4a)^k\,\, \mbox{and} \,\,\, \mathcal{L}_S^{(\bigcirc \to \emptyset)} (1) = 0,\]
where $X$ and $1$ are the generators of the algebra $\mathcal{A}.$
\end{lemma}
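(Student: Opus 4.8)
The plan is to reduce the claim for $\mathcal{L}_S^{(\bigcirc \to \emptyset)}$ to the genus formula already recorded for closed foams, using the decomposition $\mathcal{L}_S = \mathcal{L}_S^{(\bigcirc \to \emptyset)} \circ \iota$ together with Lemma~\ref{lemma:inv-trivial}. First I would recall that since $S$ is trivial of genus $2k$, its one-punctured version $S^\circ$ is a cobordism from $\bigcirc$ to $\emptyset$ whose underlying surface is, up to isotopy rel boundary, a disk with $2k$ handles attached (equivalently, a genus-$2k$ surface with one boundary circle). By functoriality (Theorem~\ref{thm:functoriality}) applied to this cobordism, $\mathcal{L}_S^{(\bigcirc \to \emptyset)}$ is the map $\mathcal{A} \to \mathbb{Z}[i][a,h]$ induced by $\mathcal{F}$ on the corresponding foam, and since $S$ is trivial this foam is a disjoint-union-free, genus-$2k$ capping of the standard circle, i.e. the composition of the ``cap'' $\epsilon$ with $k$ applications of the genus-reduction operator.

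Next I would compute this map explicitly. The genus-reduction formula in Section~\ref{sec:relations l} states that a handle is replaced by $2(\bullet) - h(\,\cdot\,)$, i.e. the endomorphism of $\mathcal{A}$ given by multiplication by $2X - h$. Therefore $\mathcal{L}_S^{(\bigcirc \to \emptyset)}$ equals $\epsilon$ composed with multiplication by $(2X-h)^k$ on $\mathcal{A}$. It remains to evaluate $\epsilon\big((2X-h)^k \cdot v\big)$ for $v \in \{1, X\}$. Using $X^2 = hX + a$ in $\mathcal{A}$ one checks that $(2X-h)^2 = 4X^2 - 4hX + h^2 = 4(hX+a) - 4hX + h^2 = h^2 + 4a$, a scalar. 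Hence $(2X-h)^k = (h^2+4a)^{k/2}$ when $k$ is even and $(h^2+4a)^{(k-1)/2}(2X-h)$ when $k$ is odd — but since here the exponent is literally $k$ and $(2X-h)^2$ is central scalar $h^2+4a$, we get directly $(2X-h)^{2k} $-type behavior; more carefully, what appears is $(2X-h)^k$ with $k$ the number of handles, and squaring pairs of handles gives $(h^2+4a)$. I would organize the bookkeeping so that a genus-$2k$ surface contributes exactly $(h^2+4a)^k$ as a central scalar times the bare cap, giving $\mathcal{L}_S^{(\bigcirc \to \emptyset)}(v) = (h^2+4a)^k\,\epsilon(v)$. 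Since $\epsilon(1) = 0$ and $\epsilon(X) = 1$, this yields $\mathcal{L}_S^{(\bigcirc \to \emptyset)}(1) = 0$ and $\mathcal{L}_S^{(\bigcirc \to \emptyset)}(X) = (h^2+4a)^k$, as claimed.

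As a consistency check, I would verify this against Lemma~\ref{lemma:inv-trivial}: $\mathrm{Inv}(S) = \mathcal{L}_S(1) = \mathcal{L}_S^{(\bigcirc \to \emptyset)}(\iota(1)) = \mathcal{L}_S^{(\bigcirc \to \emptyset)}(1) = 0$, matching the even-genus case $\mathrm{Inv}(S) = 0$; the odd-genus value $2(h^2+4a)^k$ of that lemma will later be recovered analogously from a genus-$(2k+1)$ computation, confirming that the $X$-component is the right place for the nonzero scalar. The main obstacle is making precise that ``trivial of genus $2k$'' forces the one-punctured foam to be isotopic rel boundary to the standard disk-with-$2k$-handles, so that functoriality legitimately reduces the algebra to the genus-reduction formula rather than to a more complicated foam; this is exactly where we invoke that $S$ is a connected sum of standard surfaces, so its puncture can be taken in a standard collar and the complement is a bare genus-$2k$ handlebody surface with one boundary circle. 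Once that geometric reduction is in hand, the remainder is the routine linear-algebra calculation with $(2X-h)^2 = h^2 + 4a$ sketched above.
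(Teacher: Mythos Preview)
Your approach is essentially the same as the paper's: both reduce the punctured trivial genus-$2k$ surface to a cap preceded by $2k$ handle endomorphisms, invoke the genus-reduction formula to identify each handle with multiplication by $2X-h$, and then use $(2X-h)^2 = h^2+4a$ in $\mathcal{A}$ to conclude $\mathcal{L}_S^{(\bigcirc\to\emptyset)} = (h^2+4a)^k\,\epsilon$. The paper's proof is a one-line invocation of exactly this computation.

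One expository wrinkle to clean up: in your middle paragraph you momentarily write ``$(2X-h)^k$ with $k$ the number of handles,'' but the number of handles is the genus $2k$, not $k$. You do recover the correct count a sentence later (``squaring pairs of handles gives $(h^2+4a)$,'' yielding $(2X-h)^{2k}=(h^2+4a)^k$), so the argument is sound, but the passage as written is confusing and should be tightened to state directly that genus $2k$ means $2k$ handle factors, hence $(2X-h)^{2k}=(h^2+4a)^k$.
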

\begin{proof}
The genus-reduction formula implies that 
\[\mathcal{L}_S^{(\bigcirc \to \emptyset)} (X) = (h^2 + 4a)^k \epsilon(X) = (h^2 + 4a)^k \,\,\mbox{and}\,\, \mathcal{L}_S^{(\bigcirc \to \emptyset)} (1) = (h^2 + 4a)^k \epsilon(1) = 0.\]
\end{proof}

\begin{theorem}
For any surface-knot $S$ of genus $g,$ the following holds.
\begin{enumerate}
\item If $g$ is even, then $\text{Inv}(S) = 0,$
\item If $g$ is odd, then $\text{Inv}(S) = 2(h^2 + 4a)^{\frac{g-1}{2}}.$
\end{enumerate}
\end{theorem}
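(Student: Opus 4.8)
The plan is to follow Tanaka's strategy~\cite{Ta}: reduce $\text{Inv}(S)$ for an arbitrary surface-knot to the values already computed on trivial ones by attaching $1$-handles, using ribbon-move invariance to keep track of the effect. First I would invoke the theorem of Hosokawa and Kawauchi~\cite{HK}: there is an integer $m \ge 0$ such that attaching $m$ suitably chosen $1$-handles to $S$ produces a trivial surface-knot. Attaching one more trivial $1$-handle keeps the result trivial and raises its genus by one, so after adding at most one extra trivial handle I may assume the number $n$ of attached handles is even, $n = 2k$; call the resulting trivial surface-knot $S^{+}$, which has genus $g + 2k$ (hence the same parity as $g$).

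Next, since each of the $n$ attached handles is ribbon-move equivalent to a trivial $1$-handle, and a ribbon move is supported near a single handle, performing one ribbon move per handle shows that $S^{+}$ is ribbon-move equivalent to the surface-knot obtained by attaching $n$ trivial $1$-handles to $S$, that is, to the connected sum $S \sharp T_{2k}$, where $T_{2k}$ is the standard (trivial) closed surface of genus $2k$. By the ribbon-move invariance of the theory (Carter--Saito--Satoh~\cite{CSS}), $\text{Inv}(S^{+}) = \text{Inv}(S \sharp T_{2k})$.

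Then I would compute the right-hand side with the connected-sum formula $\mathcal{L}_{S \sharp T_{2k}} = \mathcal{L}_{T_{2k}}^{(\bigcirc \to \emptyset)} \circ \mathcal{L}_{S}^{(\emptyset \to \bigcirc)}$. Writing $\mathcal{L}_{S}^{(\emptyset \to \bigcirc)}(1) = \alpha\cdot 1 + \beta\cdot X$ with $\alpha, \beta \in \mathbb{Z}[i][a,h]$ and applying $\epsilon$, the identity $\mathcal{L}_S = \epsilon \circ \mathcal{L}_{S}^{(\emptyset \to \bigcirc)}$ gives $\beta = \epsilon(\alpha + \beta X) = \text{Inv}(S)$. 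Since $T_{2k}$ is trivial of genus $2k$, Lemma~\ref{lemma:inv-trivial-evengenus} yields $\mathcal{L}_{T_{2k}}^{(\bigcirc \to \emptyset)}(1) = 0$ and $\mathcal{L}_{T_{2k}}^{(\bigcirc \to \emptyset)}(X) = (h^2 + 4a)^k$, so
\[ \text{Inv}(S \sharp T_{2k}) = \mathcal{L}_{T_{2k}}^{(\bigcirc \to \emptyset)}(\alpha + \beta X) = \beta\,(h^2 + 4a)^k = \text{Inv}(S)\,(h^2 + 4a)^k . \]
On the other hand, Lemma~\ref{lemma:inv-trivial} applied to the trivial surface-knot $S^{+}$ of genus $g + 2k$ gives $\text{Inv}(S^{+}) = 0$ if $g$ is even and $\text{Inv}(S^{+}) = 2(h^2 + 4a)^{\frac{g-1}{2} + k}$ if $g$ is odd. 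Equating and cancelling the nonzero factor $(h^2 + 4a)^k$ in the integral domain $\mathbb{Z}[i][a,h]$ finishes the proof: $\text{Inv}(S) = 0$ for $g$ even, and $\text{Inv}(S) = 2(h^2 + 4a)^{\frac{g-1}{2}}$ for $g$ odd.

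I expect the only delicate points to be geometric rather than algebraic: verifying carefully that attaching $n$ trivial $1$-handles to $S$ is the same as forming $S \sharp T_{2k}$ and that the ribbon moves trivializing the individual handles can be carried out independently, and making sure the parity adjustment really is available --- the even-genus Lemma~\ref{lemma:inv-trivial-evengenus} is essential because it kills the unknown coefficient $\alpha$, whereas an odd-genus analogue of that lemma would leave $\alpha$ in the answer and fail to close the argument. Once these are in place, the computation above is routine.
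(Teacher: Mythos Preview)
Your proof is correct and follows essentially the same approach as the paper's: both arguments write $\mathcal{L}_S^{(\emptyset \to \bigcirc)}(1)$ in the basis $\{1,X\}$, take the connected sum of $S$ with a trivial even-genus surface large enough to exceed the unknotting number, apply Lemma~\ref{lemma:inv-trivial-evengenus} to kill the $1$-coefficient and isolate $\text{Inv}(S)\,(h^2+4a)^k$, and then compare with Lemma~\ref{lemma:inv-trivial} for the resulting trivial surface. The only cosmetic differences are that the paper phrases the ribbon-move equivalence in the other direction (starting from $S\sharp\Sigma_{2k'}$ rather than from the trivialized $S^+$) and leaves the cancellation of $(h^2+4a)^k$ implicit, whereas you make it explicit via the integral-domain property of $\mathbb{Z}[i][a,h]$.
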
 
\begin{proof}
Assume that $\text{Inv}(S) = p(a,h),$ for some $p(a,h) \in \mathbb{Z}[i][a,h]$. Using the definition of $\epsilon,$ we obtain that 
$\mathcal{L}_S^{(\emptyset \to \bigcirc)}(1) = p(a,h)X + q(a,h),$ for some $q(a,h) \in\mathbb{Z}[i][a,h].$
Notice that if $g = 0$ then $p(a, h) = 0,$ since $\mathcal{L}_S$ is a map of degree $-\chi(S) = -2,$ and $\deg(a) = 4$ and $\deg(h) = 2.$

Let $\Sigma_{g'}$ be a trivial surface-knot of genus $g',$ and consider the connected sum $S \sharp  \Sigma_{2k'}$ for $k'\geq 0.$ Using Lemma~\ref{lemma:inv-trivial-evengenus} we obtain
\[
\mathcal{L}_{S \sharp \Sigma_{2k'}}(1) =( \mathcal{L}_{\Sigma_{2k'}}^{(\bigcirc \to \emptyset)} \circ \mathcal{L}_S^{(\emptyset \to \bigcirc)})(1) =  \mathcal{L}_{\Sigma_{2k'}}^{(\bigcirc \to \emptyset)}(p(a, h)X + q(a,h)) = p(a, h)(h^2 + 4a)^{k'}.\]
If we consider the integer $k'$ such that $2k'$ is greater than the unknotting number of $S,$ then the surface-knot $S \sharp \Sigma_{2k'}$ is ribbon-move equivalent to the trivial surface-knot $\Sigma_{g + 2k'}.$ Therefore by Lemma~\ref{lemma:inv-trivial} we conclude that
 \[\text{Inv}(S \sharp \Sigma_{2k'}) = \begin{cases}
0, \hspace{.15in} \text{if} \hspace{.1in} g = 2k \\
2(h^{2} + 4a)^{k+k'}, \hspace{.15in} \text{if} \hspace{.1in} g = 2k+1,\end{cases}\]
which implies that $p(a,h) = 0$ if $g = 2k,$ and $p(a,h) = 2(h^2 + 4a)^k$ if $g = 2k + 1.$
\end{proof}
\begin{corollary}
For any torus-knot $T^2$ we have $\text{Inv}(T^2) = 2.$ 
\end{corollary}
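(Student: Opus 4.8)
The plan is to deduce this immediately from the preceding Theorem by specializing the genus. A torus-knot $T^2$---that is, a surface-link in $\mathbb{R}^4$ homeomorphic to the $2$-torus---is by definition a closed connected surface of genus $g = 1$. Since the Theorem asserts that $\text{Inv}(S)$ depends only on the genus of the surface-knot $S$, it applies to $T^2$ no matter how the torus is knotted in $\mathbb{R}^4$, provided only that the embedding is locally flat (which is part of the standing definition of a surface-knot). The one routine point to note is that a torus-knot really does have genus $1$ and meets the hypotheses of the Theorem; both facts are immediate from the definitions.

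With this in hand I would simply apply part (2) of the Theorem, the odd-genus case: taking $g = 1$ gives $\text{Inv}(T^2) = 2(h^2 + 4a)^{(g-1)/2} = 2(h^2 + 4a)^{0} = 2$. There is no genuine obstacle here---the statement is a sanity check on the Theorem. If one preferred a self-contained argument, one could instead connect-sum $T^2$ with a trivial surface-knot of large even genus to reach, via the ribbon-move equivalence used in the Theorem's proof, a trivial genus-one torus, whose invariant is $2$ by Lemma~\ref{lemma:inv-trivial}; but this merely retraces the Theorem's proof and offers nothing new, so I would present the corollary as the one-line specialization above.
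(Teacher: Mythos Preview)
Your proof is correct and matches the paper's approach: the paper states the corollary without proof, as an immediate specialization of the preceding Theorem to the case $g=1$, which is exactly what you do. Your additional remarks about the alternative self-contained argument are accurate but, as you note, unnecessary.
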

We remark that the above Corollary is a generalization of a similar result obtained by Tanaka~\cite[Corollary 1.2.]{Ta} and, independently, by Rasmussen~\cite{R}.

\section{\textbf{The universal link homology over $\mathbb{C}$}}

In this section we let $a$ and $h$ to be complex numbers and consider the universal $\mf{sl}(2)$-link cohomology over $\mathbb{C},$ denoted by $\mathcal{H}(L,\mathbb{C}).$ Let $f(X) = X^2 -hX -a \in \mathbb{C}[X].$ For a given choice of $a,h \in \mathbb{C},$ the isomorphism class of $\mathcal{H}(L,\mathbb{C})$ is determined by the number of distinct roots of $f(X).$

If $f(X) = (X - \alpha)^2,$ for some $\alpha \in \mathbb{C},$ there is an isomorphism between $\mathcal{H}(L,\mathbb{C})$ and the Khovanov's original $\mf{sl}(2)$-link homology over $\mathbb{C},$ induced by the isomorphism 
$$\raisebox{-3pt}{\includegraphics[height=0.15in]{plane1d.pdf}}\to \raisebox{-3pt}{\includegraphics[height=0.15in]{plane1d.pdf}} - \alpha \raisebox{-3pt}{\includegraphics[height=0.15in]{plane.pdf}}.$$

\textbf{Two distinct roots.}\quad
Let us assume that $f(X) = (X - \alpha)(X - \beta),$ for some $\alpha, \beta \in \mathbb{C}.$ Therefore $\alpha + \beta = h,$ and $\alpha \beta = -a.$

We study this case in a similar way as Mackkay and Vaz did in~\cite[Section 3.1]{MV}. Moreover, the reader will notice similarities with the work by Gornik~\cite{G}.

Given the algebra $\mathbb{C}[X]/(f(X)),$ there is an isomorphism of algebras
\[ \mathbb{C}[X]/(f(X)) \cong \mathbb{C}[X]/(X-\alpha) \oplus \mathbb{C}[X]/(X-\beta) \cong \mathbb{C}^2. \]
Let $\Gamma$ be a resolution of a link $L$ and denote by $e(\Gamma)$ the set of all edges in $\Gamma.$ 
\begin{definition}
Let $R(\Gamma)$ be the free commutative algebra generated by elements $X_j, \,j \in e(\Gamma)$ with relations $ X_j + X_k = h$ and $X_j X_k = -a$ for any pair of edges $j,k$ that meet at a bivalent vertex.
\end{definition}
Note that for each $j \in e(\Gamma)$ we have $f(X_j) = 0,$ thus there exists an algebra homomorphism $\mathbb{C}[X]/(f(X)) \to R(\Gamma)$ defined by $X \to X_j.$

\begin{definition}\label{def:coloring}
Let $S = \{\alpha, \beta \}.$ A \textit{coloring} of $\Gamma$ is a map $\phi \co e(\Gamma) \to S,$ and an \textit{admissible coloring} is a coloring that satisfies
\[ \phi(j) + \phi(k) = h, \quad \phi(j) \phi(k) = -a\] for all edges $j,k$ meeting at a bivalent vertex. Denote by $C(\Gamma)$ the set of all colorings and by $AC(\Gamma)$ the set of all admissible colorings of $\Gamma.$ \end{definition}

\begin{lemma} \label{lemma:identities for Q}
For any coloring $\phi$ we define $$Q_\phi (\Gamma) = \displaystyle \prod_{j \in e(\Gamma)} Q_{\phi(j)}(X_j) \in R(\Gamma), \quad \text{where} \quad Q_\alpha(X) = \displaystyle \frac{X - \beta}{X - \alpha}, \,\,Q_\beta(X) = \displaystyle \frac{X - \alpha}{X - \beta}\,.$$
Then the following relations hold.
\begin{enumerate}
\item $Q_\alpha(X) + Q_\beta(X) =1, \quad Q_\alpha(X) Q_\beta(X) =0$
\item  $Q_\alpha(X)^2 = Q_\alpha(X), \quad Q_\beta(X)^2 = Q_\beta(X)$
\item $\displaystyle \sum _{\phi \in C(\Gamma)} Q_\phi (\Gamma) = 1, \quad Q_\phi(\Gamma) Q_\psi (\Gamma) = \delta_{\phi \psi}(\Gamma)$, where $ \delta_{\phi \psi}$ is the Kronecker delta
 \item $X_j Q_\phi(\Gamma) = \phi(j) Q_\phi(\Gamma).$
\end{enumerate}\end{lemma}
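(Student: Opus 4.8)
The plan is to verify the four identities essentially by direct computation in the commutative algebra generated by the $X_j$, exploiting that each $X_j$ is a root of $f(X) = (X-\alpha)(X-\beta)$ together with the bivalent-vertex relations.

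\textbf{Identities (1) and (2).} First I would establish these for a single variable $X$ with $f(X) = 0$, i.e. $(X-\alpha)(X-\beta) = 0$. Writing $Q_\alpha(X) = \frac{X-\beta}{\alpha-\beta}$ and $Q_\beta(X) = \frac{X-\alpha}{\beta-\alpha}$ (the expressions $\frac{X-\beta}{X-\alpha}$ in the statement must be interpreted via the idempotent-splitting $\mathbb{C}[X]/(f) \cong \mathbb{C}^2$, so I would clarify that $Q_\alpha, Q_\beta$ are the two orthogonal idempotents), the sum $Q_\alpha + Q_\beta$ equals $\frac{(X-\beta)-(X-\alpha)}{\alpha-\beta} = 1$, and the product is $\frac{(X-\beta)(X-\alpha)}{-(\alpha-\beta)^2} = 0$ since $(X-\alpha)(X-\beta) = f(X) = 0$ in $R(\Gamma)$. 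Identity (2) is then immediate: $Q_\alpha = Q_\alpha \cdot 1 = Q_\alpha(Q_\alpha + Q_\beta) = Q_\alpha^2 + Q_\alpha Q_\beta = Q_\alpha^2$, and symmetrically for $Q_\beta$. I should also note the bivalent-vertex relations $X_j + X_k = h = \alpha+\beta$ and $X_j X_k = -a = \alpha\beta$ force $\{X_j, X_k\}$ to behave as a complementary pair, so $Q_\alpha(X_j) = Q_\beta(X_k)$ on the nose; this consistency is what makes the product over all edges well-defined regardless of edge ordering.

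\textbf{Identity (3).} The first equality follows by expanding the product: since a coloring $\phi$ is an arbitrary map $e(\Gamma) \to \{\alpha,\beta\}$, distributing
$$\prod_{j \in e(\Gamma)} \bigl(Q_\alpha(X_j) + Q_\beta(X_j)\bigr) = \sum_{\phi \in C(\Gamma)} \prod_{j} Q_{\phi(j)}(X_j) = \sum_{\phi \in C(\Gamma)} Q_\phi(\Gamma),$$
and each factor on the left is $1$ by identity (1), so the whole product is $1$. For the orthogonality $Q_\phi(\Gamma) Q_\psi(\Gamma) = \delta_{\phi\psi} Q_\phi(\Gamma)$: if $\phi \neq \psi$ there is an edge $j$ with $\phi(j) \neq \psi(j)$, and the corresponding factor is $Q_\alpha(X_j) Q_\beta(X_j) = 0$ by identity (1), killing the product; if $\phi = \psi$, the product is $\prod_j Q_{\phi(j)}(X_j)^2 = \prod_j Q_{\phi(j)}(X_j) = Q_\phi(\Gamma)$ by identity (2).

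\textbf{Identity (4).} It suffices to observe that for a single variable, $X_j Q_\alpha(X_j) = X_j \cdot \frac{X_j - \beta}{\alpha - \beta}$; using $(X_j-\alpha)(X_j-\beta) = 0$, i.e. $X_j^2 = (\alpha+\beta)X_j - \alpha\beta$, one computes $X_j(X_j-\beta) = X_j^2 - \beta X_j = \alpha X_j - \alpha\beta = \alpha(X_j - \beta)$, hence $X_j Q_\alpha(X_j) = \alpha\, Q_\alpha(X_j)$, and symmetrically $X_j Q_\beta(X_j) = \beta\, Q_\beta(X_j)$. In all cases $X_j Q_{\phi(j)}(X_j) = \phi(j) Q_{\phi(j)}(X_j)$. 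Multiplying by the remaining factors $\prod_{k \neq j} Q_{\phi(k)}(X_k)$, which commute with $X_j$, gives $X_j Q_\phi(\Gamma) = \phi(j) Q_\phi(\Gamma)$. The only genuine subtlety — and the step I would be most careful about — is making precise the meaning of the rational expressions $\frac{X-\beta}{X-\alpha}$ in a ring where $X-\alpha$ is a zero divisor; the honest statement is that $Q_\alpha, Q_\beta$ denote the idempotents of the splitting $\mathbb{C}[X]/(f) \cong \mathbb{C}[X]/(X-\alpha) \times \mathbb{C}[X]/(X-\beta)$ pulled back to $R(\Gamma)$, equivalently $Q_\alpha(X) = \frac{X-\beta}{\alpha-\beta}$ and $Q_\beta(X) = \frac{X-\alpha}{\beta-\alpha}$, and once this interpretation is fixed every identity above is a one-line computation.
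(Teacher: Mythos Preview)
Your proof is correct. The paper states this lemma without proof, treating the identities as routine consequences of the idempotent decomposition $\mathbb{C}[X]/(f(X)) \cong \mathbb{C}^2$; your write-up supplies exactly those routine verifications. In particular, your clarification that the rational expressions $\frac{X-\beta}{X-\alpha}$ must be read as the Lagrange idempotents $\frac{X-\beta}{\alpha-\beta}$, $\frac{X-\alpha}{\beta-\alpha}$ is the right interpretation (and is the only way the statement makes sense, since $X-\alpha$ is a zero divisor). One minor remark: the aside about the bivalent-vertex relations forcing $Q_\alpha(X_j) = Q_\beta(X_k)$ is true and relevant to the subsequent lemma, but is not needed for the present one---well-definedness of the product is automatic from commutativity of $R(\Gamma)$. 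Also, you correctly write the orthogonality as $Q_\phi Q_\psi = \delta_{\phi\psi} Q_\phi$; the paper's ``$\delta_{\phi\psi}(\Gamma)$'' is presumably a typo for this.
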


The next result is analogous to~\cite[Theorem 3]{G} and~\cite[Lemma 3.7]{MV}. 
\begin{lemma}\label{lemma:decomposed-alg}
For any non-admissible coloring $\phi,$ we have $Q_\phi(\Gamma) = 0.$ 

For any admissible coloring $\phi,$ we have $Q_\phi(\Gamma) R(\Gamma) \cong \mathbb{C} \Longrightarrow \dim Q_\phi(\Gamma) R(\Gamma) =1.$

Therefore, the following direct sum decomposition of $\mathbb{C}$-algebras holds
\[ R(\Gamma) \cong \displaystyle \bigoplus_{\phi \in AC(\Gamma)} Q_\phi(\Gamma) \mathbb{C}. \] 
\end{lemma}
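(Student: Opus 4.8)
The plan is to establish the three assertions of Lemma~\ref{lemma:decomposed-alg} in order, using the idempotent relations of Lemma~\ref{lemma:identities for Q} as the main computational engine. First I would prove that $Q_\phi(\Gamma) = 0$ for any non-admissible coloring $\phi$: if $\phi$ is non-admissible, then there is a bivalent vertex where edges $j$ and $k$ meet with, say, $\phi(j) = \phi(k) = \alpha$ (the case $\phi(j)=\phi(k)=\beta$ is symmetric). In $R(\Gamma)$ we have $X_j + X_k = h = \alpha + \beta$, so $X_k = \beta$ when restricted by $X_j = \alpha$; more directly, $Q_\phi(\Gamma)$ contains the factor $Q_\alpha(X_j)Q_\alpha(X_k)$, and using $X_jX_k = -a = \alpha\beta$ together with $X_j + X_k = h$ one checks that $X_j Q_\alpha(X_j)Q_\alpha(X_k) = \alpha Q_\alpha(X_j)Q_\alpha(X_k)$ while also $X_k Q_\alpha(X_j)Q_\alpha(X_k) = \alpha Q_\alpha(X_j)Q_\alpha(X_k)$ by part (4); but then $h\cdot Q = (X_j + X_k)Q = 2\alpha Q$, forcing $(h - 2\alpha)Q = (\beta - \alpha)Q = 0$, and since $\alpha \neq \beta$ we get $Q_\phi(\Gamma) = 0$.

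Next I would show that for an admissible coloring $\phi$, the ideal $Q_\phi(\Gamma)R(\Gamma)$ is one-dimensional over $\mathbb{C}$. By part (4) of Lemma~\ref{lemma:identities for Q}, $X_j$ acts on $Q_\phi(\Gamma)R(\Gamma)$ as the scalar $\phi(j) \in \mathbb{C}$ for every edge $j$. Since $R(\Gamma)$ is generated as an algebra by the $X_j$, every element of $Q_\phi(\Gamma)R(\Gamma)$ is a $\mathbb{C}$-multiple of $Q_\phi(\Gamma)$ itself; moreover $Q_\phi(\Gamma) \neq 0$ because $Q_\phi(\Gamma)^2 = Q_\phi(\Gamma)$ by part (3) with $\psi = \phi$, and a nonzero idempotent in particular. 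Hence $Q_\phi(\Gamma)R(\Gamma) \cong \mathbb{C}$.

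Finally, for the direct sum decomposition, I would combine the completeness relation $\sum_{\phi \in C(\Gamma)} Q_\phi(\Gamma) = 1$ with the orthogonality $Q_\phi(\Gamma)Q_\psi(\Gamma) = \delta_{\phi\psi}(\Gamma)$ from part (3). The vanishing of $Q_\phi(\Gamma)$ for non-admissible $\phi$ lets us replace the sum over $C(\Gamma)$ by the sum over $AC(\Gamma)$, so $\{Q_\phi(\Gamma)\}_{\phi \in AC(\Gamma)}$ is a complete system of orthogonal idempotents. This yields $R(\Gamma) = \bigoplus_{\phi \in AC(\Gamma)} Q_\phi(\Gamma)R(\Gamma)$ as a decomposition of $\mathbb{C}$-algebras (the sum is direct by orthogonality, it spans by completeness, and each summand was identified with $\mathbb{C}$ in the previous step), giving $R(\Gamma) \cong \bigoplus_{\phi \in AC(\Gamma)} Q_\phi(\Gamma)\mathbb{C}$.

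The main obstacle I anticipate is the first step: one must be careful that the ``denominators'' in $Q_\alpha(X) = (X-\beta)/(X-\alpha)$ are interpreted correctly as polynomials in $R(\Gamma)$ rather than genuine fractions — since $f(X_j) = (X_j - \alpha)(X_j - \beta) = 0$ in $R(\Gamma)$, the expression $Q_{\phi(j)}(X_j)$ should be read as the image of the corresponding idempotent in $\mathbb{C}[X]/(f(X))$ under $X \mapsto X_j$, and the argument showing $Q_\phi(\Gamma)=0$ for non-admissible $\phi$ must be phrased entirely in terms of these polynomial identities (or equivalently via the eigenvalue argument using part (4)), avoiding any division. Verifying that the local relation at a shared bivalent vertex genuinely forces $\alpha + \beta = h$ and $\alpha\beta = -a$ to be consistent with both edges receiving the same color failing is exactly where the hypothesis of two distinct roots is used.
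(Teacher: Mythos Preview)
Your first and third steps are essentially the paper's argument. The paper uses part~(4) of Lemma~\ref{lemma:identities for Q} to obtain $(\phi(j)+\phi(k))Q_\phi(\Gamma) = hQ_\phi(\Gamma)$ and $\phi(j)\phi(k)Q_\phi(\Gamma) = -aQ_\phi(\Gamma)$ at each vertex, whence a non-admissible $\phi$ forces $Q_\phi(\Gamma)=0$; you spell out the same computation in the concrete case $\phi(j)=\phi(k)=\alpha$. The decomposition via a complete orthogonal system of idempotents is likewise the same.

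There is, however, a genuine gap in your second step. You assert $Q_\phi(\Gamma)\neq 0$ for admissible $\phi$ ``because $Q_\phi(\Gamma)^2 = Q_\phi(\Gamma)$ \ldots\ and a nonzero idempotent in particular.'' But the relation $Q^2=Q$ is of course satisfied by $Q=0$, so this is circular: nothing in Lemma~\ref{lemma:identities for Q} by itself rules out $Q_\phi(\Gamma)=0$ in the quotient ring $R(\Gamma)$, and a product of nonzero elements in a quotient can certainly vanish. The paper supplies the missing ingredient: since $\phi$ is admissible, the defining relations $X_j+X_k-h$ and $X_jX_k+a$ of $R(\Gamma)$ all vanish under the substitution $X_j\mapsto\phi(j)$, so evaluation at $\phi$ gives a well-defined algebra homomorphism $R(\Gamma)\to\mathbb{C}$. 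Under this map $Q_\phi(\Gamma)\mapsto\prod_j Q_{\phi(j)}(\phi(j))=1\neq 0$, hence $Q_\phi(\Gamma)\neq 0$ in $R(\Gamma)$. Once that is established, your observation that each $X_j$ acts as the scalar $\phi(j)$ on $Q_\phi(\Gamma)R(\Gamma)$ does yield one-dimensionality, and is in fact more direct than the paper's route through the auxiliary ring $R'(\Gamma)=\bigotimes_j \mathbb{C}[X_j]/(f(X_j))$.
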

\begin{proof}
Let $\phi$ be any coloring and $j,k \in e(\Gamma)$ be the labels corresponding to a vertex. By relation (4) in the previous lemma and relations in the algebra $R(\Gamma),$ we get
\begin{align*} (\phi(j) + \phi(k)) Q_\phi(\Gamma) \,&= \,h Q_\phi(\Gamma) \\
\phi(j) \, \phi(k) Q_\phi(\Gamma) \,&= \,-a Q_\phi(\Gamma) \end{align*}
If $\phi$ is non-admissible, one of the relations $\phi(j) + \phi(k) = h,\, \phi(j) \phi(k) = -a$ does not hold, thus $Q_\phi(\Gamma) = 0.$

Now assume that $\phi$ is admissible. Then $Q_\phi(\Gamma) \neq 0.$ For, if $Q_\phi(\Gamma)$ was $0$ then its representative in $ \otimes_{j \in e(\Gamma)}\mathbb{C}[X_j] $ would be in the ideal generated by the relations that define the algebra $R(\Gamma).$ But these relations give $0$ when evaluated at $X_j = \phi(j)$, while $Q_\phi(\Gamma)$ evaluates to $1$ at $X_j = \phi(j).$ Consider $R'(\Gamma) = \otimes _{j \in e(\Gamma)} \mathbb{C}[X_j]/(f(X_j))$ and $Q'_\phi(\Gamma) \in R'(\Gamma)$ given by the same expressions as $Q_\phi(\Gamma).$ Note that Lemma~\ref{lemma:identities for Q} holds for $Q'_\phi(\Gamma)$ as well, and that they form a basis for $R'(\Gamma).$ Thus $Q'_\phi(\Gamma) R'(\Gamma) = Q'_\phi(\Gamma) \mathbb{C},$ and the same holds in $R(\Gamma)$ since it is a quotient of $R'(\Gamma).$ \end{proof}

Relations (ED) show that $R(\Gamma)$ acts on $\mathcal{F}(\Gamma)$ by the web cobordism of merging a circle into an edge of $\Gamma.$ From Lemma~\ref{lemma:identities for Q} and Lemma~\ref{lemma:decomposed-alg} we have
\[ \mathcal{F}(\Gamma) = \displaystyle \bigoplus_{\phi \in AC(\Gamma)} Q_\phi(\Gamma)\mathcal{F}(\Gamma). \]
For all $x \in \mathcal{F}(\Gamma)$ we have $\, x \in Q_\phi(\Gamma)\mathcal{F}(\Gamma) \Longleftrightarrow X_j x = \phi(j)x, \quad \forall j \in e(\Gamma).$ Moreover, using an inductive argument on the number of vertices in $\Gamma,$ and the results from Proposition~\ref{prop:categorified web relations}, we have that the $\mathbb{C}$-space $Q_\phi(\Gamma)\mathcal{F}(\Gamma)$ is one-dimensional, for any $\phi \in AC(\Gamma).$

\begin{definition} Let $L_{\phi}$ be a \textit{colored link} with its arcs colored by $\alpha$ and $\beta.$ Following Mackaay and Vaz~\cite{MV}, we say that a coloring $\phi$ of $L$ is \textit{admissible} if there exists a resolution of $L$ which admits an admissible coloring that is compatible with the coloring of $L.$ We denote by $AC(L)$ the set of all admissible colorings of $L.$ We also say that an admissible coloring of $L$ is a \textit{canonical coloring} if the arcs belonging to the same component of $L$ have the same color. We denote by $C(L)$ the set of canonical colorings of $L.$\end{definition}

We also denote by  $\mathcal{H}(L_{\phi}, \mathbb{C})$ the cohomology over $\mathbb{C}$ of the colored link $L_\phi,$ induced by the spaces $Q_{\phi}(\Gamma)\mathcal{F}(\Gamma),$ for all resolutions $\Gamma$ of the given link. 
\begin{proposition}
If $\phi \in AC(L) \diagdown C(L)$ then $\mathcal{H}(L_{\phi}, \mathbb{C}) = 0.$ Therefore $$\mathcal{H}(L, C) = \displaystyle \bigoplus _{\phi \in C(L)} \mathcal{H}(L_{\phi}, \mathbb{C}).$$

\end{proposition}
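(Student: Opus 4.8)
The plan is to mimic the strategy of Mackaay and Vaz~\cite[Section 3.1]{MV}, reducing the claim about the colored cohomology $\mathcal{H}(L_\phi,\mathbb{C})$ to an elementary observation about the differentials. First I would fix an admissible but non-canonical coloring $\phi\in AC(L)\diagdown C(L)$ and choose a component $K$ of $L$ on which $\phi$ is not constant. Along $K$ one may travel from an arc colored $\alpha$ to an arc colored $\beta$; since the only places where the color of an arc can change are at the bivalent vertices inside the singular resolutions of crossings, there must be a crossing $c$ of $L$ at which the two arcs of $K$ entering the crossing receive different colors in (a resolution compatible with) $\phi$. The key point is then to analyze, crossing by crossing, the two-term complex attached to $c$—namely the map $\Phi_c$ between the oriented and the singular resolution—restricted to the idempotent pieces $Q_\phi(\Gamma)\mathcal{F}(\Gamma)$.

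The core computation is the following: for the distinguished crossing $c$, the saddle-type foam defining the differential of $[c]$ induces, after applying $\mathcal{F}$ and projecting onto the $Q_\phi$-summands, an isomorphism between the one-dimensional space at the oriented resolution and the one-dimensional space at the singular resolution, \emph{provided} the two relevant edges at the bivalent vertices of the singular resolution receive distinct colors. This is where relations (ED) and Lemma~\ref{lemma:identities for Q}(4) do the work: $X_j$ acts on $Q_\phi(\Gamma)\mathcal{F}(\Gamma)$ as the scalar $\phi(j)$, so the saddle map, whose matrix entry involves the difference $\phi(j)-\phi(k)$ (a nonzero scalar, since $\phi(j)\ne\phi(k)$ forces $\{\phi(j),\phi(k)\}=\{\alpha,\beta\}$ and $\alpha\ne\beta$), becomes an invertible multiplication. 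Concretely, one checks that on the $\phi$-summand the differential $d$ at the crossing $c$ is, up to a nonzero unit, the identity map between the two summands, so the two-term subcomplex $Q_\phi\mathcal{F}([c])$ is acyclic.

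Having established acyclicity at $c$, I would invoke the Gaussian-elimination / cancellation lemma for complexes (as in Bar-Natan~\cite{BN1}): the full complex $\mathcal{F}([L])$ decomposes over the admissible colorings as $\bigoplus_{\phi\in AC(L)}Q_\phi(\Gamma)\mathcal{F}([L])$ (this is the displayed decomposition of $\mathcal{F}(\Gamma)$ applied resolution-by-resolution and checked to be compatible with the differentials, since the foams commute with the $R(\Gamma)$-action up to the (ED) relations), and the summand indexed by a non-canonical $\phi$ contains an isomorphism in the differential coming from $c$, hence is contractible. Therefore $\mathcal{H}(L_\phi,\mathbb{C})=H(Q_\phi\mathcal{F}([L]))=0$ for $\phi\in AC(L)\diagdown C(L)$, and the direct-sum formula $\mathcal{H}(L,\mathbb{C})=\bigoplus_{\phi\in C(L)}\mathcal{H}(L_\phi,\mathbb{C})$ follows immediately from the colored decomposition $\mathcal{F}(\Gamma)=\bigoplus_{\phi\in AC(\Gamma)}Q_\phi(\Gamma)\mathcal{F}(\Gamma)$ together with the vanishing just proved.

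The main obstacle I anticipate is bookkeeping rather than conceptual: one must verify carefully that the colored decomposition of the chain groups is genuinely a decomposition \emph{of complexes}, i.e.\ that every differential foam $\mathcal{F}(S)$ in $\mathcal{F}([L])$ respects the splitting into $Q_\phi$-eigenspaces. This reduces to showing that each elementary foam (saddle, cup, cap, and the singular-circle annuli) intertwines the actions of $X_j$ on the incoming and outgoing edges appropriately—an eigenvalue/weight computation governed by relations (ED) and the definitions in Section~\ref{sec:TQFT}. The subtle case is a saddle that merges two edges of the \emph{same} color, where one must check the map is compatible, versus the crossing $c$ above where the colors differ and the map is an isomorphism; distinguishing these cleanly, and confirming that at a non-canonical coloring there is always at least one crossing of the latter type, is the heart of the argument.
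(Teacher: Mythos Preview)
Your strategy matches the paper's: decompose the chain complex along admissible colorings, show that at a suitable crossing the differential restricts to an isomorphism on the summand indexed by a non-canonical $\phi$, and conclude acyclicity. The paper carries this out by a local analysis of the two resolutions $\Gamma$ (singular) and $\Gamma'$ (oriented) at a crossing, identifying which colorings are compatible with each and showing that for the ``mixed'' colorings $\phi_2,\phi_2'$ the saddle map is an isomorphism.

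The one technical point where your write-up diverges from the paper is the mechanism for proving that isomorphism. You invoke (ED) and Lemma~\ref{lemma:identities for Q}(4) and speak of the saddle's ``matrix entry involving $\phi(j)-\phi(k)$'', but the saddle foam itself carries no dots, so (ED) alone does not produce that scalar. The paper instead composes the two saddles in both orders, $\Gamma\to\Gamma'\to\Gamma$ and $\Gamma'\to\Gamma\to\Gamma'$, and applies relations (CN) and (RSC) respectively; these relations rewrite the composite foams as linear combinations of dotted identities, and \emph{then} Lemma~\ref{lemma:identities for Q}(4) turns the dots into the scalars $\phi(j),\phi(k)$, yielding a nonzero multiple of the identity precisely when $\phi(j)\neq\phi(k)$. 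This is exactly the factor $\alpha-\beta$ you anticipate, but it comes from (RSC)/(CN), not from (ED). With that correction your argument is complete and coincides with the paper's.
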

\begin{proof} We only sketch the proof, since it is similar to the proof of~\cite[Theorem 3.9]{MV}. The differences are that it uses our relations (RSC) and (CN).

Consider the diagrams $\Gamma =\raisebox{-10pt} {\includegraphics[height=0.4in]{singres.pdf}}$ and $\Gamma' = \raisebox{-10pt} {\includegraphics[height=0.4in]{orienres.pdf}}.$ Up to permutation, the admissible colorings of $\Gamma$ are $$\phi_1 = \raisebox{-19pt}{\includegraphics[height=.6in]{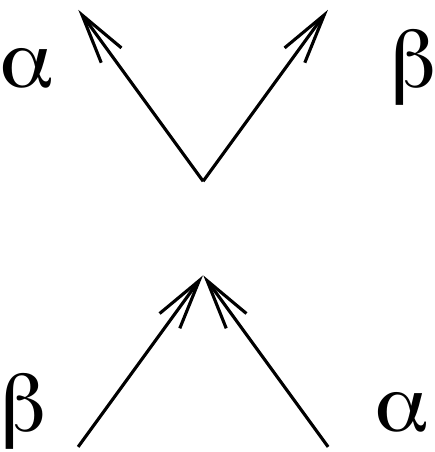}} \qquad \mbox{and} \qquad \phi_2 = \raisebox{-19pt}{\includegraphics[height=0.6in]{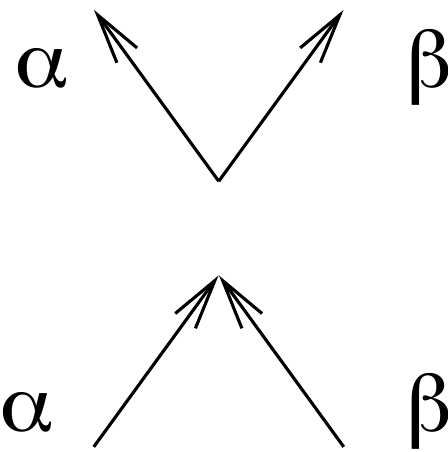}}$$
and up to permutation, the admissible colorings of $\Gamma'$ are $$\phi_1' = \raisebox{-19pt}{\includegraphics[height=.6in]{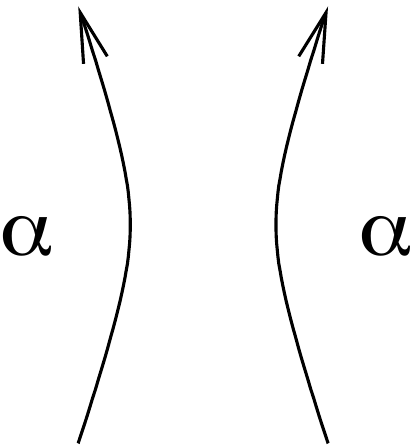}} \qquad \mbox{and} \qquad \phi_2' = \raisebox{-19pt}{\includegraphics[height=0.6in]{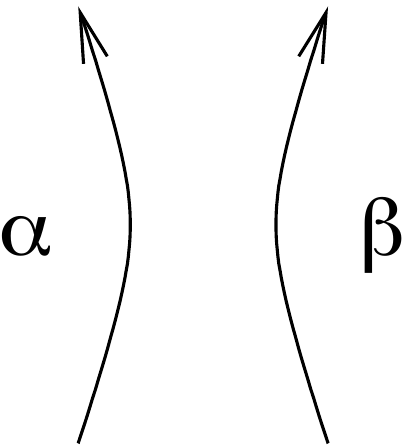}}\,.$$
The elementary cobordisms (the piecewise oriented saddles depicted in Figure~\ref{fig:saddle}) have to map colorings to compatible colorings. Therefore $$ Q_{\phi_1}(\Gamma)\mathcal{F}(\Gamma) \to 0, \qquad Q_{\phi_1'}(\Gamma')\mathcal{F}(\Gamma') \to 0
.$$
Consider the cobordisms $\Gamma \to \Gamma' \to \Gamma$ and $\Gamma' \to \Gamma \to \Gamma'$ and use relations (CN) and (RSC) respectively, to show that both maps are isomorphisms. Therefore $$Q_{\phi_2}(\Gamma)\mathcal{F}(\Gamma) \cong Q_{\phi_2'}(\Gamma')\mathcal{F}(\Gamma) .$$
In conclusion, from the boundary map behaviour explained above, the colorings that survive in the cohomology $\mathcal{H}(L, \mathbb{C})$ are (up to permutation) $\phi_1$ for $\Gamma$ and $\phi_1'$ for $\Gamma',$ which are exactly those obtained from canonical colorings of $L.$ \end{proof}

Given an $n$-component link $L,$ there are $2^n$ canonical colorings of $L,$ and each such coloring $\phi$ defines precisely one resolution, namely, the one obtained by resolving to the singular resolution all crossings at which $\phi$-values of the two strands are different, and resolving to the oriented resolution all crossings at which $\phi$-values of the two strands are equal. Note that the cohomological degree of the resolution determined by some $\phi$ is easy to compute, since only the singular resolution contributes to the cohomological degree, and this contribution is $-1$ for positive crossings and $1$ for negative crossings. Hence the following result holds.

\begin{theorem}
For any $n$-component link $L,$ the dimension of $\mathcal{H}(L, \mathbb{C})$ equals $2^n,$ and to each map $\phi \co \{ \text{components of L} \} \to S = \{ \alpha, \beta \}$ there exists a non-zero element $h_{\phi} \in \mathcal{H}(L, \mathbb{C})$ which lies in the cohomological degree
$$ -2 \sum_{\substack {(u_1,u_2)\in S \times S \\ u_1 \neq u_2}} lk(\phi^{-1}(u_1), \phi^{-1}(u_2)).$$
All $h_{\phi}$ generate $\mathcal{H}(L, \mathbb{C}).$
\end{theorem}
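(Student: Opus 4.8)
The plan is to combine the structural results already established for $\mathcal{H}(L,\mathbb{C})$ into a clean count. By the preceding Proposition, $\mathcal{H}(L,\mathbb{C}) = \bigoplus_{\phi \in C(L)} \mathcal{H}(L_\phi,\mathbb{C})$, where $C(L)$ is the set of canonical colorings, i.e.\ maps $\phi$ from the set of components of $L$ to $S = \{\alpha,\beta\}$. There are exactly $2^n$ such maps, so it suffices to show that each summand $\mathcal{H}(L_\phi,\mathbb{C})$ is one-dimensional and to locate its cohomological degree. First I would recall from the discussion just before the statement that a canonical coloring $\phi$ singles out a unique resolution $\Gamma_\phi$ of $L$: resolve to the singular resolution exactly those crossings whose two strands receive different $\phi$-values, and to the oriented resolution those whose strands agree. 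The point established earlier is that for every admissible coloring (in particular $\phi$ restricted to $\Gamma_\phi$) the space $Q_\phi(\Gamma_\phi)\mathcal{F}(\Gamma_\phi)$ is one-dimensional over $\mathbb{C}$, and the analysis of the boundary maps in the Proposition shows these are precisely the classes that survive to cohomology. So each $\mathcal{H}(L_\phi,\mathbb{C})$ is one-dimensional, generated by a nonzero class $h_\phi$, and summing gives $\dim\mathcal{H}(L,\mathbb{C}) = 2^n$ with the $h_\phi$ generating.

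For the cohomological degree I would argue as follows. The complex $\mathcal{F}([L])$ places the all-oriented resolution in cohomological degree $0$, and by the chains in Figure~\ref{fig:mapcones} each positive crossing resolved to its singular resolution shifts cohomological degree by $+1$ while each negative crossing resolved to its singular resolution shifts by $-1$ (the oriented resolution of a positive or negative crossing contributing $0$, as noted in the text). Hence the cohomological degree of $\Gamma_\phi$ equals $s_- - s_+$, where $s_+$ (resp.\ $s_-$) is the number of positive (resp.\ negative) crossings at which the two strands get different $\phi$-values. The next step is to identify $s_+ - s_-$ with a linking-number expression: a crossing between a strand in $\phi^{-1}(u_1)$ and a strand in $\phi^{-1}(u_2)$ with $u_1 \neq u_2$ contributes $\pm 1$ to the linking number $lk(\phi^{-1}(u_1),\phi^{-1}(u_2))$ according to its sign, and self-crossings within a single color class never have differently-colored strands so contribute nothing. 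Summing the signs of all bichromatic crossings therefore yields $2\sum_{u_1\neq u_2} lk(\phi^{-1}(u_1),\phi^{-1}(u_2))$ — the factor $2$ because the ordered pairs $(u_1,u_2)$ and $(u_2,u_1)$ both appear for $S = \{\alpha,\beta\}$ — and the overall cohomological degree is the negative of this, namely
\[
-2\sum_{\substack{(u_1,u_2)\in S\times S \\ u_1\neq u_2}} lk\bigl(\phi^{-1}(u_1),\phi^{-1}(u_2)\bigr),
\]
matching the claimed formula.

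The main obstacle I anticipate is not the counting but verifying that $h_\phi$ genuinely survives to cohomology in the expected degree and is nonzero there — that is, that the differential neither kills $Q_\phi(\Gamma_\phi)\mathcal{F}(\Gamma_\phi)$ nor hits it from a neighboring resolution. This requires the boundary-map analysis from the proof of the preceding Proposition: one must check that at every crossing the relevant elementary saddle maps a $\phi$-eigenspace either isomorphically to the compatible $\phi$-eigenspace in the adjacent resolution or to zero, and that when the cohomology is computed the only eigenspace not cancelled in each pair is the one coming from a canonical coloring (relations (RSC) and (CN) being the computational input, exactly as in~\cite{MV}). Once that local picture is in place, an induction on the number of crossings — identical in spirit to the inductive argument already used to show $\dim Q_\phi(\Gamma)\mathcal{F}(\Gamma) = 1$ — shows the $2^n$ classes $h_\phi$ are linearly independent and span, completing the proof. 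I would also double-check the sign bookkeeping in the degree formula against a simple example such as the Hopf link to make sure the factor of $2$ and the overall minus sign are as stated.
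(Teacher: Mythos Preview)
Your approach is precisely the paper's: the theorem appears there without a separate proof, the paragraph immediately preceding it supplying exactly the argument you outline (the decomposition from the Proposition, one-dimensionality of each $Q_\phi(\Gamma_\phi)\mathcal{F}(\Gamma_\phi)$, and the crossing-by-crossing degree count). One bookkeeping slip to fix when you carry out your Hopf-link check: in this paper's conventions (from $[\text{pos}]=\mathbf{M}([\text{sing}]\to[\text{orien}])$ with the oriented resolution at degree $0$) the singular resolution of a \emph{positive} crossing sits at cohomological degree $-1$ and of a negative crossing at $+1$, the reverse of what you wrote---so your intermediate expression $s_--s_+$ is right but your stated justification for it is backwards.
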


\textbf{Acknowledgements.} The author would like to thank the referee for reading the manuscript carefully and making valuable suggestions.


\begin{thebibliography}{999}
\bibitem{BN1} D. Bar-Natan, {\em Khovanov's homology for tangles and cobordisms}, 
 Geom. Topol. \textbf{9} (2005) 1443-1499.

\bibitem{CC} C. Caprau, {\em An $\mf{sl}(2)$ tangle homology and seamed cobordisms}, preprint 2007, math.GT/0707.3051.

\bibitem{me} C. Caprau, {\em sl(2) tangle homology with a parameter and singular cobordisms}, Algebr. Geom. Topol. \textbf{8} (2008) 729-756.

\bibitem{CC1} C. Caprau, {\em Universal Khovanov-Rozansky $\mf{sl}(2)$ cohomology}, preprint 2008, math. GT/0805.2755.

\bibitem{CS} J.S. Carter, M. Saito, {\em Reidemeister moves for surface isotopies and their interpretations as moves to movies}, J. Knot Theory and its Ramifications \textbf{2} (1993), 251-284.

\bibitem{CSS} J.S. Carter, M. Saito and S. Satoh, {\em Ribbon-moves for 2-knots with 1-handle attached and Khovanov-Jacobsson numbers}, Proc. Amer. Math. Soc. \textbf{134} (2006), no.9, 2779-2783.

\bibitem{CMW} D. Clark, S.Morrison, K.Walker, {\em Fixing the functoriality of Khovanov homology}, preprint 2007, math.GT/0701339.

\bibitem {G} B. Gornik, {\em Note on Khovanov link cohomology}, preprint 2004, math.QA/0402266. 

\bibitem{HK} F. Hosokawa, A. Kawauchi, {\em Proposal for unknotted surfaces in four-spaces}, Osaka J. Math. \textbf{16} (1979), 233-248.

\bibitem{J} M. Jacobsson, {\em An invariant of link cobordisms from Khovanov homology}, Algebr. Geom. Topol. \textbf{4} (2004), 1211-1251.

\bibitem{Kh1} M. Khovanov, {\em A categorification of the Jones polynomial}, Duke Math. J., \textbf {101} (2000), no. 3, 359-426. 

\bibitem{Kh3} M. Khovanov, {\em $\mf{sl}(3)$ link homology}, Algebr. Geom. Topol. \textbf{4} (2004), 1045-1081.

\bibitem{KhR1} M. Khovanov, L.Rozansky, {\em Matrix factorizations and link homology}, Fundamenta Math.  \textbf{199} (2008), 1-91.

\bibitem{Kh2} M. Khovanov, {\em Link homology and Frobenius extensions}, Fundamenta Math. \textbf{190} (2006), 179-190.

\bibitem{Kh4} M. Khovanov, { \em An invariant of tangle cobordisms}, Trans. Amer. Math. Soc. \textbf{358} (2006), no. 1, 315-327.

\bibitem{L} E. S. Lee, {\em An endomorphism of the Khovanov invariant}, Adv. Math. \textbf{197} (2005), no. 2, 554-586.

\bibitem{MV} M. Mackaay, P. Vaz, {\em The universal $\mf{sl}(3)$-link homology}, Algebr. Geom. Topol. \textbf{7} (2007) 1135-1169.

\bibitem{R} J. A. Rasmussen, {\em Khovanov's invariant for closed surfaces}, preprint 2005, math.GT/0502527.

\bibitem {Ta} K. Tanaka, {\em Khovanov-Jacobsson numbers and invariants of surface-knots derived from Bar-Natan's theory}, Proc. Amer. Math. Soc. \textbf{134} (2006), no. 12, 3685-3689.

\end{thebibliography}
\end{document}